\numberwithin{equation}{section}
\newcounter{hours}\newcounter{minutes}
\theoremstyle{plain}
\declaretheorem[title=Theorem, parent=section]{thm}
\declaretheorem[title=Lemma,sibling=thm]{lem}
\declaretheorem[title=Proposition,sibling=thm]{prop}
\declaretheorem[title=Definition,sibling=thm]{DEF}
\declaretheorem[title=Remark,sibling=thm]{rem}
\def\G{\mathcal G}
\def\I{\mathcal I}
\def\M{{\mathcal M}}
\def\real{{\mathbb R}}
\def\Indicator{{\mathbbm{1}}}
\def\ep{\varepsilon}
\def\al{\alpha}
\def\del{\delta}
\def\om{\omega}
\def\Om{\Omega}
\def\gam{\gamma} 
\def\lam{\lambda}
\def\Lam{\Lambda}
\def\sig{\sigma}
\def\Div{\textnormal{div}}
\def\grad{\nabla}
\def\diam{\textnormal{diam}}
\def\Tr{\textnormal{tr}}
\def\Id{\textnormal{Id}}
\DeclareMathOperator{\Hess}{Hess}
\def\intersect{\cap}
\DeclareMathOperator{\spt}{spt}
\newcommand{\abs}[1]{\left| #1 \right|}
\newcommand{\norm}[1]{\lVert#1\rVert}
\DeclareMathOperator{\vol}{vol}
\begin{document}
	

\title{Estimates for Dirichlet-to-Neumann maps as integro-differential operators}

\author{Nestor Guillen}
\author{Jun Kitagawa}
\author{Russell W. Schwab}

\address{Department of Mathematics\\
University of Massachusetts, Amherst\\
Amherst, MA  90095}
\email{nguillen@math.umass.edu}

\address{Department of Mathematics\\
Michigan State University\\
619 Red Cedar Road \\
East Lansing, MI 48824}
\email{kitagawa@math.msu.edu}

\address{Department of Mathematics\\
Michigan State University\\
619 Red Cedar Road \\
East Lansing, MI 48824}
\email{rschwab@math.msu.edu}

\begin{abstract}
	
	Some linear integro-differential operators have old and classical representations as the Dirichlet-to-Neumann operators for linear elliptic equations, such as the 1/2-Laplacian or the generator of the boundary process of a reflected diffusion.  In this work, we make some extensions of this theory to the case of a \emph{nonlinear} Dirichlet-to-Neumann mapping that is constructed using a solution to a \emph{fully nonlinear} elliptic equation in a given domain, mapping Dirichlet data to its normal derivative of the resulting solution.  Here we begin the process of giving detailed information about the L\'evy measures that will result from the integro-differential representation of the Dirichlet-to-Neumann mapping.  We provide new results about both linear and nonlinear Dirichlet-to-Neumann mappings.  Information about the L\'evy measures is important if one hopes to use recent advancements of the integro-differential theory to study problems involving Dirichlet-to-Neumann mappings.

\end{abstract}

\date{\today,\  ArXiv version 1}
\thanks{The authors all acknowledge partial support from the NSF leading to the completion of this work: N. Guillen DMS-1201413 and DMS-1700307; J. Kitagawa DMS-1700094; R. Schwab DMS-1665285.  They would like to thank Rodrigo Ba\~nuelos and Renming Song for helpful information on background results appearing in Section \ref{sec:Tools}.}
\keywords{Dirichlet-to-Neumann, integro-differential, nonlocal, elliptic equation, boundary process, fully nonlinear, Levy measures, boundary operators}
\subjclass[2000]{35J99, 
45J05, 
47G20, 
49L25, 
49N70, 
60J75, 
93E20 
}

\maketitle
 \markboth{N. Guillen, J. Kitagawa, R. Schwab}{D-to-N and Integro-Differential Operators}


\section{Introduction, Assumptions, Background}\label{sec:FirstSection}
\setcounter{equation}{0}

\subsection{Introduction}\label{sec:Intro}

In this work, we explore the precise connection between integro-differential operators acting on functions in, e.g. $C^{1,\al}(\partial\Om)$, for $\Om$ a nice domain in $\real^{n+1}$, and operators that are the Dirichlet-to-Neumann mappings (from now on, ``D-to-N'')  for various elliptic equations in $\Om$.  We prove estimates on the L\'evy measures (explained below) that appear in the integro-differential representation of these D-to-N operators.  Our motivating interest is the D-to-N for fully nonlinear elliptic equations (itself, a nonlinear mapping), and the resulting integro-differential theory.  However, in the course of exploring the nonlinear setting, we noticed the linear theory seems not to be recorded in any place, except for the case of the Laplacian, where Hsu \cite[Section 4]{Hsu-1986ExcursionsReflectingBM} gave a complete description for the boundary process of a reflected Brownian motion in a smooth domain.  In that sense, this paper can be considered an extension of \cite{Hsu-1986ExcursionsReflectingBM} to the case of more general linear and nonlinear equations.

The set-up for the D-to-N is as follows.  Let $\Om$ be a bounded domain (assumed throughout for simplicity, but many adaptations to unbounded domains are possible),  let $\phi\in C^{1,\al}(\partial\Om)$, and generically, we take $U_\phi$ as the unique solution of
\begin{align}\label{eqIn:BulkExtensionGeneric}
	\begin{cases}
		F(U_\phi,x)=0\ &\text{in}\ \Om\\
		U_\phi=\phi\ &\text{on}\ \partial\Om.
	\end{cases}
\end{align} 
Here $F$ may be any one of the possible operators:

\begin{align}
	F(U,x) &= \Div(A(x)\grad U),\  \text{with}\ A\in C^\al(\Om)\ \text{and uniformly elliptic}, \label{eqIn:BulkFDiv}\\
	F(U,x) &= \Tr(A(x)D^2U),\ \text{with}\ A\in C^\al(\Om)\ \text{and uniformly elliptic}, \label{eqIn:BulkFNondiv}\\
	F(U,x) &= F(D^2U,x),\ \text{with}\  F\  \text{uniformly elliptic with (locally) H\"older coefficients}\label{eqIn:BulkFNonlinear}.
\end{align}
The precise assumptions appear in more detail below.  The D-to-N, which we call $\I$, is defined as
\begin{align}\label{eqIn:DefOfDtoN}
	\phi\mapsto \partial_\nu U_\phi, \text{denoted as}\ \I(\phi,x):= \partial_\nu U_\phi(x),
\end{align}
where $\nu(x)$ is the \emph{inward} normal vector to $\partial\Om$ at $x$.  In each of these three situations, it is not hard to check (which we do below)  that the D-to-N, is not only well defined as a map from $C^{1,\al}(\partial\Om)$ to $C^\al(\partial\Om)$, but it also enjoys what we call the global comparison property (defined below, Definition \ref{def:GCP}).  This is the simple fact that the operator, $\I$, preserves ordering between any two functions that are globally ordered on $\partial\Om$ and agree at a point in their domain.  The global comparison property of these D-to-N operators is the driving feature behind our results.

In the first two of the cases listed in (\ref{eqIn:BulkFDiv}) and (\ref{eqIn:BulkFNondiv}), $F$, and hence also $\I$ are linear operators.  It was proved in the 1960's, by Bony-Courr\`ege-Priouret \cite{BonyCourregePriouret-1966FellerSemiGroupOnDifferentialVariety}, through linearity and the global comparison property, that $\I$ must be an integro-differential operator of the form
\begin{align}\label{eqIn:IntDiffLinear}
	\I(\phi,x) = b(x)\cdot\grad \phi(x) + \textnormal{p.v.}\ \int_{\partial\Om} (\phi(h)-\phi(x)) \mu(x,dh),
\end{align}
for some tangential vector field, $b$, and a L\'evy measure, $\mu(x,\cdot)$.  Recently, two of the authors, in \cite{GuSc-2016MinMaxNonlocalarXiv}, obtained a min-max representation for nonlocal and nonlinear operators that results in a formula similar to (\ref{eqIn:IntDiffLinear}), and in one of our theorems below, we invoke this result to show that  $\I$ in the nonlinear setting will be a min-max over a family of linear operators of the form (\ref{eqIn:IntDiffLinear}). We will record this result precisely in our main results, listed below.  We note to the reader that we have collected various notations in Section \ref{sec:Notation}.

Our goal is not to re-derive (\ref{eqIn:IntDiffLinear}), but rather to more precisely detail the properties of $b$ and $\mu$. In order to connect $\I$ to the recent activity in the theory of linear and nonlinear integro-differential equations and to exploit some recent results,  further properties of the L\'evy measures ($\mu$ in (\ref{eqIn:IntDiffLinear})) are required to know which integro-differential results are applicable.  This is the main goal of the article, and our main results are as follows.  We note that we have separated many of the assertions for the sake of presentation and that they hold under different assumptions on the regularity of $\partial\Om$.  Theorems \ref{thm:MainLinear} and \ref{thm:HolderXDependenceTVNorm} have somewhat standard assumptions on $\partial\Om$, and Theorem \ref{thm:HolderDrift} requires significantly more regularity of $\partial\Om$.

In the following results, $\partial\Om$ will be viewed as a Riemannian manifold whose Riemannian metric is induced by the Euclidean inner product on $\real^{n+1}$.
 
\begin{thm}[Linear D-to-N]\label{thm:MainLinear}
	Assume that $F$ is as in one of (\ref{eqIn:BulkFDiv}) or (\ref{eqIn:BulkFNondiv}).
	If $\Om\subset\real^{n+1}$ is bounded and $\partial\Om$ is of class $C^{3}$ with an injectivity radius bounded from below by $2r_0>0$, and $\I$ is defined via (\ref{eqIn:BulkExtensionGeneric}), (\ref{eqIn:DefOfDtoN}), then there exists a vector field, $b$, and a family of measures parametrized by $x$, $\mu(x,dh)$, such that for all $\phi\in C^{1,\al}(\partial\Om)$
	\begin{align}
		\I(\phi,x) = (b(x),\grad \phi(x))_g + 
		\int_{\partial\Om}\left( \phi(h)-\phi(x)-
		\Indicator_{B_{r_0}(x)}(h)(\grad \phi(x), \exp^{-1}_x(h))_g \right)
		\mu(x,dh).\label{eqIn:Thm1DtoNIntDiffLinear}
	\end{align}

	\noindent
	Furthermore, $b$ and $\mu$ satisfy:
	\begin{enumerate}[(i)]
		\item For all $x\in\partial\Om$, $\mu(x,\cdot)$ has a density, $\mu(x,dh)=K(x,h)\sig(dh)$,
		\item There exist universal $c_1>0$ and $c_2\geq c_1$ so that for all $x\in\partial\Om$, $h\in\partial\Om$, and $x\not=h$, $\displaystyle c_1 d(x,h)^{-n-1}\leq K(x,h)\leq c_2 d(x,h)^{-n-1}$,
		\item $b$ is bounded.
	\end{enumerate}
	We note that $c_1$, $c_2$, and the bound for $b$ depend only on the $C^{1,\al}$ nature of $\partial\Om$ in the case of $F$ in (\ref{eqIn:BulkFDiv}) and only on the $C^{2}$ nature of $F$  for (\ref{eqIn:BulkFNondiv}). 
\end{thm}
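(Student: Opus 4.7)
My plan is to establish the representation (\ref{eqIn:Thm1DtoNIntDiffLinear}) in three stages: derive the abstract integro-differential form from the global comparison property (GCP); extract two-sided bounds on the kernel from Poisson kernel estimates; and finally control the drift.

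First I verify GCP. If $\phi_1 \leq \phi_2$ on $\partial\Om$ with equality at a point $x_0$, then by the maximum principle $U_{\phi_1} \leq U_{\phi_2}$ in $\Om$, and Hopf's lemma, applicable because $\partial\Om \in C^3$ satisfies a uniform interior ball condition, yields $\partial_\nu U_{\phi_1}(x_0) \leq \partial_\nu U_{\phi_2}(x_0)$, i.e.\ $\I(\phi_1,x_0) \leq \I(\phi_2,x_0)$. Global Schauder theory, for either (\ref{eqIn:BulkFDiv}) or (\ref{eqIn:BulkFNondiv}), shows $\I \colon C^{1,\al}(\partial\Om) \to C^\al(\partial\Om)$ is a bounded linear map with $\I(1,\cdot) \equiv 0$. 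Invoking the Bony--Courr\`ege--Priouret structure theorem on the Riemannian manifold $\partial\Om$ then produces a representation of the form $\I(\phi,x) = (b(x),\grad\phi(x))_g + \int [\phi(h)-\phi(x) - \text{compensator}]\,\mu(x,dh)$. The absence of a local second-order (diffusion) term can be justified either by noting that the D-to-N of a second-order elliptic equation is a first-order pseudo-differential operator on smooth data, or directly by testing $\I$ against smooth functions whose prescribed second tangential derivatives at $x$ do not contribute to $\I(\phi,x)$.

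Second, I establish the kernel bounds. For $x' \in \Om$ near $x$, the Poisson representation $U_\phi(x') = \int_{\partial\Om} P(x',y)\phi(y)\,\sig(dy)$ lets me test $\I(\cdot,x)$ against bump functions concentrated near $h \in \partial\Om \setminus \{x\}$, identifying (up to a positive constant) the density $K(x,h)$ with $\lim_{x' \to x,\ x' \in \Om} P(x',h)/d(x',\partial\Om)$. This simultaneously shows that $\mu(x,\cdot) \ll \sig$ (proving assertion (i)) and reduces the two-sided bound $c_1 d(x,h)^{-n-1} \leq K(x,h) \leq c_2 d(x,h)^{-n-1}$ to the classical Poisson kernel estimate
\begin{equation*}
c_1\, \frac{d(x',\partial\Om)}{|x'-h|^{n+1}} \leq P(x',h) \leq c_2\, \frac{d(x',\partial\Om)}{|x'-h|^{n+1}}.
\end{equation*}
For (\ref{eqIn:BulkFDiv}) this is standard in $C^{1,\al}$ domains with H\"older coefficients (Gr\"uter--Widman, Kenig--Pipher type bounds). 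For (\ref{eqIn:BulkFNondiv}) it follows in $C^2$ domains by freezing coefficients, perturbing via Schauder estimates, and constructing Krylov--Safonov type barriers. The chosen form (\ref{eqIn:Thm1DtoNIntDiffLinear}) with the $\Indicator_{B_{r_0}(x)}$ compensator is now justified because the integrand vanishes like $d(x,h)^{1+\al}$ for $\phi \in C^{1,\al}$, which is integrable against $d^{-n-1}$ on the $n$-dimensional manifold $\partial\Om$.

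Third, the boundedness of $b$ follows from continuity of $\I$. Plugging into the representation the tangential functions $\psi_x^e(y) = \eta(d(x,y))(\exp_x^{-1}(y), e)_g$ (with $\eta$ a fixed smooth cutoff supported in $B_{r_0}(x)$), and subtracting the jump contribution—absolutely convergent thanks to the kernel bounds from stage two—yields $b(x)\cdot e$ as a quantity bounded by $\|\I \psi_x^e\|_{C^0(\partial\Om)}$, hence by the operator norm of $\I$ times a $C^{1,\al}$ norm of $\psi_x^e$ that is uniform in $x$. The main obstacle I anticipate is the \emph{lower} bound $K(x,h) \geq c_1 d(x,h)^{-n-1}$ in the nondivergence setting, where the Poisson kernel theory is less canonical than in the divergence case; the argument hinges on producing uniform Hopf-type subsolutions whose constants remain stable under H\"older perturbation of the coefficient field, and the $C^3$ regularity of $\partial\Om$ is crucial for matching geometry across different base points $x$.
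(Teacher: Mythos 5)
Your overall architecture (GCP plus a Courr\`ege-type structure theorem for the representation, boundary estimates for the kernel, testing with linearized coordinate functions for the drift) matches the paper's in spirit, and the inputs you cite for the divergence case (Gr\"uter--Widman/Cho-type Green's function and Poisson kernel bounds) are the same ones the paper uses. However, there are two genuine gaps. First, in stage one you wave away the possible second-order tangential term in the Bony--Courr\`ege--Priouret representation. Eliminating it is not automatic from the GCP: one needs a quantitative statement that $\I$ is of ``order at most one,'' which in the paper is supplied by the Lipschitz bound $\I:C^{1,\al}(\partial\Om)\to C^{\al'}(\partial\Om)$ (Lemma \ref{lemWP:DtoNLipschitz}) feeding into \cite[Theorem 1.6 and Proposition 1.7]{GuSc-2016MinMaxNonlocalarXiv}; the ``pseudo-differential operator of order one'' heuristic does not apply with H\"older coefficients and $C^3$ boundaries, and ``testing against functions whose second derivatives do not contribute'' is precisely the step that needs the quantitative bound.

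Second, and more seriously, your stage two for the non-divergence case \eqref{eqIn:BulkFNondiv} relies on a pointwise two-sided Poisson kernel bound $P(x',h)\asymp d(x')\,|x'-h|^{-n-1}$, which is not available off the shelf: for non-divergence operators the $L$-harmonic measure need not a priori have a well-behaved density with respect to $\sig$, and ``freezing coefficients and perturbing via Schauder'' does not produce such a kernel with H\"older coefficients. Your identification $K(x,h)=\lim_{x'\to x}P(x',h)/d(x')$ also presupposes the existence of the density that assertion (i) is supposed to establish, so the absolute continuity claim is circular as written. The paper avoids both problems by never invoking a pointwise Poisson kernel in the non-divergence case: it first proves $\mu(x,\cdot)\ll\sig$ away from $x$ by a covering argument (bounding $U_\phi(x+s\nu(x))$ for $\phi$ supported in a union of small balls via the harmonic-measure-to-Green's-function comparison of Proposition \ref{propBG:HarmonicMeasureGreenFunction}, i.e.\ $\om_y(B_\rho(x)\cap\partial\Om)\asymp\rho^{-2}\int_{B_\rho}G$ in the non-divergence case, together with the Hueber--Sieveking two-sided Green's function bounds), and only then obtains the two-sided bounds on $K$ by estimating $\mu(x,B_{2r}(y))/\sig(B_{2r}(y))$ and applying the Lebesgue differentiation theorem. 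To repair your argument you should replace the pointwise Poisson kernel step by estimates on the harmonic measure of boundary balls, and run the absolute continuity argument before, not after, the identification of the density.
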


In addition, if we assume more regularity of $\partial\Om$, one can obtain more information about the constituents of the representation in (\ref{eqIn:Thm1DtoNIntDiffLinear}).

\begin{thm}[H\"older Drift]\label{thm:HolderDrift}
	If additionally for $\Om$ as above, it is assumed that $\partial\Om$ is of class $C^5$, then $b$ as in (\ref{eqIn:Thm1DtoNIntDiffLinear}) is H\"older continuous in $x$.
\end{thm}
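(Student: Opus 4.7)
The plan is to derive an explicit formula for $b(x)$ by testing the representation (\ref{eqIn:Thm1DtoNIntDiffLinear}) against a specific, smoothly varying family of test functions, and then to establish H\"older continuity of $b$ by combining boundary Schauder estimates for (\ref{eqIn:BulkExtensionGeneric}) with H\"older-in-$x$ control of the L\'evy measure $\mu(x,\cdot)$ on sets bounded away from the diagonal.

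To isolate $b$, fix a smooth cutoff $\eta:[0,\infty)\to[0,1]$ with $\eta\equiv 1$ on $[0,r_0/4]$ and $\spt\eta\subset[0,r_0/2]$. For a fixed $w\in\real^{n+1}$, let $v(x):=w-(w,\nu(x))\nu(x)\in T_x\partial\Om$, which is $C^4$ in $x$ when $\partial\Om\in C^5$. Define
\[
\psi_x(h):=\eta(|\exp_x^{-1}(h)|)\,(v(x),\exp_x^{-1}(h))_g,
\]
extended by zero off the exponential chart at $x$, so that $\psi_x(x)=0$, $\grad_h\psi_x(x)=v(x)$, and $\spt\psi_x\subset B_{r_0/2}(x)$. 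Substituting $\psi_x$ into (\ref{eqIn:Thm1DtoNIntDiffLinear}), the integrand vanishes on $B_{r_0/4}(x)$ (where $\eta=1$) and outside $B_{r_0}(x)$ (since $\spt\psi_x\subset B_{r_0/2}(x)$), leaving
\[
(b(x),v(x))_g=\I(\psi_x,x)-\int_{B_{r_0}(x)\setminus B_{r_0/4}(x)} R_x(h)\,\mu(x,dh),
\]
where $R_x(h)$ is an explicit smooth, $x$-dependent function supported in the fixed annulus. As $w$ varies over $\real^{n+1}$, $v(x)$ sweeps out $T_x\partial\Om$, so H\"older continuity of the left-hand side in $x$ for each such $w$ yields H\"older continuity of $b$.

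H\"older continuity of the right-hand side is then established piece by piece. For the first summand, $\I(\psi_x,x)=\partial_\nu U_{\psi_x}(x)$, where $U_{\psi_x}$ solves (\ref{eqIn:BulkExtensionGeneric}) with Dirichlet data $\psi_x$. The $C^5$ hypothesis ensures $x\mapsto\psi_x$ is H\"older continuous as a map into $C^{1,\al}(\partial\Om)$, since $\nu(\cdot)$, $v(\cdot)$, and the Riemannian data underlying $\exp_x^{-1}$ all depend smoothly on the base point. Applying boundary $C^{2,\al}$ Schauder estimates to each $U_{\psi_x}$ and to the linear equation satisfied by $U_{\psi_x}-U_{\psi_{x'}}$ produces an estimate of the form $|\partial_\nu U_{\psi_x}(x)-\partial_\nu U_{\psi_{x'}}(x')|\leq C\,d(x,x')^{\gam}$ for some $\gam>0$. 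The annular integral is handled directly: $K(x,h)$ is uniformly bounded by Theorem~\ref{thm:MainLinear}(ii), $R_x$ is jointly H\"older in $(x,h)$, and H\"older-in-$x$ control of $\mu(x,\cdot)$ in total variation on sets bounded away from the diagonal (essentially Theorem~\ref{thm:HolderXDependenceTVNorm}) closes that estimate; the symmetric difference contribution from the shifting annulus is of order $d(x,x')$ via the uniform upper bound on the kernel.

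The main obstacle is the quantitative $x$-dependence of $\partial_\nu U_{\psi_x}(x)$, which requires simultaneously tracking three moving pieces: the evaluation point $x$, the inward normal $\nu(x)$ defining the operator $\partial_\nu$, and the Dirichlet datum $\psi_x$. Decoupling these by comparing solutions with a common base point and then shifting the evaluation is what consumes the additional derivatives on $\partial\Om$ beyond those used in Theorem~\ref{thm:MainLinear}: the composition $\exp_x^{-1}$ already costs two derivatives, $C^{2,\al}$ boundary regularity of $U_{\psi_x}$ costs one more, and the final quantitative comparison of the two solutions needs yet another to yield an honest H\"older rate. The hypothesis $\partial\Om\in C^5$ is a comfortable margin for all these considerations to close.
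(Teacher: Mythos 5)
Your argument is correct in outline, but it is genuinely different from the paper's proof, and the difference is instructive. The paper also tests the representation against $(v,\exp_{x_0}^{-1}(h))_g$ times a cutoff, but its cutoff transitions from $1$ to $0$ on the thin annulus $B_{r_0+d_0^{\alpha_1}}(x_0)\setminus B_{r_0}(x_0)$, where $d_0=d(x_0,y_0)$ is the distance between the two base points being compared. This makes the annular correction terms \emph{small} (of order $d_0^{\alpha_1}$, bounded individually via Theorem \ref{thm:MainLinear}(ii)), so the paper never has to compare $\mu(x_0,\cdot)$ with $\mu(y_0,\cdot)$; the price is that the test functions have $C^{1,\beta}$ norms blowing up like $d_0^{-2\alpha_1}$, forcing the use of the localization lemma from \cite{GuSc-2016MinMaxNonlocalarXiv} and the delicate Riemannian estimates ($\left[\grad(\phi-\psi)\right]_{C^{0,1}}\leq Cd_0$ via Jacobi fields and second covariant derivatives of $D(\exp^{-1})$) that are where the $C^5$ hypothesis is actually consumed. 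You instead use a cutoff at the fixed scale $r_0/4$, so your family $\psi_x$ has uniformly bounded $C^2$ norms and varies Lipschitz-continuously in $C^2(\partial\Om)$ (joint $C^3$ regularity of $(x,h)\mapsto\exp_x^{-1}(h)$, available since $\partial\Om\in C^5$); consequently the term $\I(\psi_x,x)$ is handled directly by the global Lipschitz property of $\I:C^{1,\al}\to C^{\al'}$ together with the uniform $C^{1,\al'}(\overline\Om)$ bound \eqref{eqDtoN:C1GammaEstimate} on $U_{\psi_{x'}}$, with no localization lemma and no Jacobi-field computation. The price you pay is the fat annular correction on $B_{r_0}(x)\setminus B_{r_0/4}(x)$, whose H\"older dependence on $x$ genuinely requires the total-variation H\"older continuity of $\mu$ away from the diagonal; this is exactly Theorem \ref{thm:HolderXDependenceTVNorm}, whose proof in the paper is independent of Theorem \ref{thm:HolderDrift}, so there is no circularity. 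In short: the paper trades measure-comparison for norm blow-up, you trade norm blow-up for measure-comparison, and your route is arguably cleaner once Theorem \ref{thm:HolderXDependenceTVNorm} is in hand.

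Two small points to tighten. First, "boundary $C^{2,\al}$ Schauder estimates" are not available in the divergence case \eqref{eqIn:BulkFDiv} with merely $C^\al$ coefficients; but you do not need them --- the $C^{1,\al'}$ global estimate \eqref{eqDtoN:C1GammaEstimate} applied to $U_{\psi_x}-U_{\psi_{x'}}$ and to $U_{\psi_{x'}}$ (the latter to control $|\grad U_{\psi_{x'}}(x)-\grad U_{\psi_{x'}}(x')|$ and $|\nu(x)-\nu(x')|$) already yields your estimate on $\partial_\nu U_{\psi_x}(x)-\partial_\nu U_{\psi_{x'}}(x')$. Second, Theorem \ref{thm:HolderXDependenceTVNorm} is stated for $\delta$ "sufficiently small," whereas you need it at the fixed scale $\delta\sim r_0$; inspecting its proof, the smallness is only used to absorb a lower-order term in the volume comparison and is harmless here (or one may decompose your annulus into finitely many thin shells), but this should be said. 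You should also record that, since $b(x)\perp\nu(x)$, one has $(b(x),v(x))_g=b(x)\cdot w$, so H\"older continuity of $x\mapsto b(x)\cdot w$ for $w$ in a basis of $\real^{n+1}$ gives the intrinsic statement of Definition \ref{def:HolderVectorField} up to an additional $O(d(x,y))$ error from comparing parallel transport with the ambient identification.
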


\begin{rem}
	We note that for Theorem \ref{thm:HolderDrift}, we openly admit that assuming $\partial\Om$ is $C^5$ is most likely more than necessary.  However, given that our eventual interest is the hope that some Krylov-Safonov type theorems will be developed for the resulting integro-differential operators, the regularity of $b$ is a low priority.  In the context of Krylov-Safonov results, it is the boundedness of $b$ that is more important, e.g. akin to the results in \cite{Silv-2011DifferentiabilityCriticalHJ}.
\end{rem}

Our next result shows that the L\'evy measures (away from the singularity) are H\"older continuous in the TV norm.
Specifically, it shows that the L\'evy measure in (\ref{eqIn:Thm1DtoNIntDiffLinear}), restricted to the set outside of a small ball at the singularity, when $h=x$, enjoys a control that depends on the size of the ball as well as a H\"older fashion in $x$.

We denote by $\textbf{M}(\partial\Om)$ the space of signed measures on $\partial \Om$, and by $\norm{\cdot}_{TV}$ the total variation norm of a signed measure on $\partial \Om$. Recall that (see \cite[Section 29]{HalmosBook50})
\begin{align}\label{eqn:TVcharacterization}
 \norm{\mu}_{TV}=\sup\left\{\abs{\int_{\partial \Om}\phi(h)\mu(dh)}\ :\  \phi\in L^\infty(\partial \Om),\ \norm{\phi}_{L^\infty(\partial \Om)}\leq 1\right\}.
\end{align}

\begin{thm}[H\"older in TV Norm]\label{thm:HolderXDependenceTVNorm}
	    For a fixed $\delta>0$, define $\mu_\del: \partial \Om\to \textbf{M}(\partial \Om)$ by  
	  \begin{align*}
	   \mu_\del(x):=\chi_{\partial \Om \setminus B_\delta(x)}(\cdot)\mu(x, \cdot).
	  \end{align*}
	  Then there exists an $\alpha\in (0, 1)$ such that for $\delta>0$ sufficiently small, 
	  \begin{align*}
	  \mu_\del\in C^{\alpha}_{loc}\left(\partial \Om; \left(\textbf{M}(\partial \Om), \norm{\cdot}_{TV}\right) \right).
	  \end{align*}
	  More specifically, for each $\delta$ there exists a constant $C>0$ such that
	  for any $x_0\in \partial \Om$ and $x_1$, $x_2 \in B_{\delta/4}(x_0)$ it holds that
	  \begin{align*}
	   \norm{\mu_\del(x_1)-\mu_\del(x_2)}_{TV}\leq \frac{C}{\delta^2} d(x_1, x_2)^\alpha.
	  \end{align*}
	  Here $C$ depends on universal parameters and the lower bound on the Ricci curvature of $\partial\Om$, $\alpha$ arises from the $C^{1,\alpha}$ and $C^2$ character of $\partial\Om$ for $F$ respectively in (\ref{eqIn:BulkFDiv}) and (\ref{eqIn:BulkFNondiv}), while the smallness required of $\delta$ depends only on $\partial \Om$.
\end{thm}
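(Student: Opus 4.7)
The plan is to exploit the dual characterization of the TV norm in (\ref{eqn:TVcharacterization}): I would fix $\phi\in L^\infty(\partial\Om)$ with $\norm{\phi}_{L^\infty(\partial\Om)}\leq 1$ and uniformly bound
\begin{align*}
 J(\phi):=\int_{\partial\Om}\phi(h)\,d(\mu_\del(x_1)-\mu_\del(x_2))(h).
\end{align*}
The whole argument will reduce to a single claim: whenever $\psi\in L^\infty(\partial\Om)$ with $\norm{\psi}_{L^\infty}\leq 1$ vanishes on $\partial\Om\cap B_{\del/2}(x_0)$, then for all $x,x_1,x_2\in\partial\Om\cap B_{\del/4}(x_0)$ one has
\begin{align*}
 \I(\psi,x)=\int_{\partial\Om}\psi(h)K(x,h)\,\sig(dh),\qquad \abs{\I(\psi,x_1)-\I(\psi,x_2)}\leq C\del^{-1-\al}d(x_1,x_2)^\al.
\end{align*}
The H\"older bound will come from the maximum principle, which yields $\norm{U_\psi}_{L^\infty(\Om)}\leq 1$, together with a scale-invariant boundary Schauder estimate (divergence-form Schauder for (\ref{eqIn:BulkFDiv}) and Krylov-type boundary estimates for (\ref{eqIn:BulkFNondiv})) applied to $U_\psi$ in $\Om\cap B_{\del/2}(x_0)$ where the Dirichlet data is identically zero; the standard rescaling then produces $\norm{U_\psi}_{C^{1,\al}(\overline\Om\cap B_{\del/4}(x_0))}\leq C\del^{-1-\al}$. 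The integral identity will follow by approximating $\psi$ in $L^p$ by smooth $\psi_n$ sharing the same vanishing support and applying (\ref{eqIn:Thm1DtoNIntDiffLinear}) to each $\psi_n$; Theorem \ref{thm:MainLinear}(ii) gives $K(x,h)\leq C\del^{-n-1}$ uniformly on $\suppt\psi$ so dominated convergence handles the right side, while boundary Schauder handles the left.

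With the reduction in hand, I would decompose $\phi=\phi\chi_E+\phi\chi_{B_{2\del}(x_0)}$ where $E:=\partial\Om\setminus B_{2\del}(x_0)$. For $x_1,x_2\in B_{\del/4}(x_0)$ both indicators $\chi_{d(x_i,h)>\del}$ are identically $1$ on $E$, so the $E$-piece of $J(\phi)$ equals $\I(\phi\chi_E,x_1)-\I(\phi\chi_E,x_2)$, controlled by $C\del^{-1-\al}d(x_1,x_2)^\al$ via the reduction (noting $\phi\chi_E$ vanishes on $B_{2\del}(x_0)\supset B_{\del/2}(x_0)$). For the inner piece, I would add and subtract $K(x_2,h)\chi_{d(x_1,h)>\del}$ and write it as
\begin{align*}
 &\int\phi\,\chi_{B_{2\del}(x_0)\cap B_\del(x_1)^c}\bigl[K(x_1,h)-K(x_2,h)\bigr]\sig(dh)\\
 &\qquad +\int\phi\,K(x_2,h)\bigl[\chi_{d(x_1,h)>\del}-\chi_{d(x_2,h)>\del}\bigr]\sig(dh).
\end{align*}
The first summand is handled by the reduction applied to $\psi_{\text{mid}}:=\phi\chi_{B_{2\del}(x_0)\cap B_\del(x_1)^c}$, which vanishes on $B_\del(x_1)\supset B_{\del/2}(x_0)$ (since $x_1\in B_{\del/4}(x_0)$). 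The second summand is supported on the symmetric difference $B_\del(x_1)\triangle B_\del(x_2)$; since $d(x_1,x_2)\leq \del/2$ the triangle inequality forces $d(x_2,h)\geq \del/2$ on this set, Theorem \ref{thm:MainLinear}(ii) then gives $K(x_2,h)\leq C\del^{-n-1}$, and a tubular-neighborhood estimate on the $n$-dimensional Riemannian manifold $\partial\Om$ gives $\sig(B_\del(x_1)\triangle B_\del(x_2))\leq C\del^{n-1}d(x_1,x_2)$, so this summand contributes at most $Cd(x_1,x_2)/\del^{2}$, which is dominated by $C\del^{-1-\al}d(x_1,x_2)^\al$ because $d(x_1,x_2)\leq \del/2$. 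Summing the three contributions and taking the supremum over $\phi$ will yield the stated bound.

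The main obstacle will be the central reduction claim, and within it the interaction of two ingredients: (a) the scale-invariant boundary $C^{1,\al}$ estimate with explicit $\del^{-1-\al}$ scaling for solutions of the divergence-form and nondivergence-form equations with $L^\infty$ Dirichlet data vanishing on a relatively open piece of $\partial\Om$, and (b) the $L^\infty$-version of the integral identity $\I(\psi,x)=\int\psi K(x,\cdot)\sig$, which requires a density argument leaning on Theorem \ref{thm:MainLinear}(ii) to justify dominated convergence. Both are standard in spirit but need to be assembled with care so the constants track correctly in $\del$; the remainder of the argument is geometric bookkeeping in a normal coordinate chart around $x_0$, valid whenever $\del$ is small relative to the injectivity radius of $\partial\Om$.
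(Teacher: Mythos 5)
Your proposal is correct and follows essentially the same route as the paper: a localized boundary H\"older estimate for $\I$ applied to data vanishing near $x_0$, plus a kernel-bound-times-annulus-measure estimate (via the two-sided bounds of Theorem \ref{thm:MainLinear}(ii) and volume comparison) for the mismatch of the two truncation sets, which is exactly how the paper treats the term $\I(\phi(\eta_{k,x_1}-\eta_{k,x_2}),x_2)$. The only substantive difference is bookkeeping: the paper reduces to $C^{1,\al}$ test functions and uses smooth cutoffs $\eta_{k,x_i}$ so that the representation formula (\ref{eqIn:Thm1DtoNIntDiffLinear}) applies directly, passing to the sharp indicator only at the end, whereas you work with sharp indicators from the start and therefore need the $L^\infty$-data version of the integral identity --- this is provable, but the passage to the limit in $\partial_\nu U_{\psi_n}(x)$ requires equicontinuity from the boundary Schauder estimate plus Arzel\`a--Ascoli (since $\psi_n\to\psi$ only in $L^p$), not just dominated convergence.
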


Next, we have the result for the nonlinear version of the D-to-N mapping.

\begin{thm}[Nonlinear D-to-N]\label{thm:MainNonlinear}
	If $\Om$ is bounded and $\partial\Om$ is of class $C^{3}$ with an injectivity radius bounded from below by $r_0>0$, and $\I$ is defined via (\ref{eqIn:BulkExtensionGeneric}), (\ref{eqIn:DefOfDtoN}), using $F$ as in (\ref{eqIn:BulkFNonlinear}), then $\I$ is a min-max over an appropriate family of operators given by $b^{ij}$ and $\mu^{ij}$,
	\begin{align}\label{eqIn:NonlinearMinMax}
		\I(\phi,x) = &\min_i \max_j \{ f^{ij}(x) + c^{ij}(x)\phi(x) +   (b^{ij}(x),\grad \phi(x))_g \nonumber \\
		&\ \ \ \  + 
		\int_{\partial\Om}\left( \phi(h)-\phi(x)-
		\Indicator_{B_{r_0}(x)}(h)(\grad \phi(x), \exp^{-1}_x(h))_g \right)
		\mu^{ij}(x,dh)
		\}.
	\end{align}
	Furthermore, 
	\begin{enumerate}[(i)]
		\item the L\'evy measures satisfy, uniformly in $i,j$, for $x,h\in\partial\Om$, $x\not=h$,
		\begin{enumerate}[(a)]
			\item a ring estimate: there exist universal $R$, $C_1$, $C_2$, all $>0$,  so that for all $0<r\leq R$
			$\displaystyle C_1 r^{-1}\leq\mu^{ij}(x, B_{2r}(x)\setminus B_r(x))\leq C_2 r^{-1}$
			\item lower bound: there exists universal $R>0$ and $\eta>0$ so that for all $h$ with $d(x,h)<R$ and $\displaystyle 0<r<\frac{d(x,h)}{10}$\\ 
			$\displaystyle \frac{C_1r^{\eta}}{(d(x,h))^{\eta+1}}\leq \mu^{ij}(x,B_r(h))$.
		\end{enumerate}
		\item $b^{ij}$ is bounded uniformly in $i,j$.
	\end{enumerate}
	The constants depend on universal parameters and only on the $C^2$ nature of $\partial\Om$.
\end{thm}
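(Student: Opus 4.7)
The plan is to reduce Theorem~\ref{thm:MainNonlinear} to the linear Theorem~\ref{thm:MainLinear} by means of the min-max representation of \cite{GuSc-2016MinMaxNonlocalarXiv}. The starting point is to verify that $\I$, viewed as a map $C^{1,\al}(\partial\Om) \to C^\al(\partial\Om)$, is Lipschitz and enjoys the GCP. The GCP is immediate: if $\phi_1 \geq \phi_2$ on $\partial\Om$ with equality at $x$, the comparison principle for (\ref{eqIn:BulkFNonlinear}) gives $U_{\phi_1} \geq U_{\phi_2}$ in $\Om$ with equality at $x$, so $\partial_\nu U_{\phi_1}(x) \geq \partial_\nu U_{\phi_2}(x)$. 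The Lipschitz property follows from Caffarelli and Silvestre--Sirakov boundary $C^{1,\al}$ estimates applied to $U_{\phi_1}-U_{\phi_2}$, which solves an equation in the extremal Pucci class.

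Once $\I$ is known to be Lipschitz and GCP, \cite{GuSc-2016MinMaxNonlocalarXiv} furnishes a representation
\begin{align*}
\I(\phi,x) = \min_i \max_j L^{ij}(\phi,x),
\end{align*}
with each $L^{ij}$ linear and GCP, hence by Bony--Courr\`ege--Priouret of the form (\ref{eqIn:Thm1DtoNIntDiffLinear}), which yields $b^{ij}$ and $\mu^{ij}$ after absorbing constant and zeroth-order parts into $f^{ij}$ and $c^{ij}$. The next step is to identify each $L^{ij}$ with the linear D-to-N of a linearization of $F$ at a base function $\phi_0^{ij}$, so that $L^{ij}$ corresponds to the D-to-N of $\Tr(A^{ij}(x) D^2 V) = 0$ with $A^{ij}(x) := F_M(D^2 U_{\phi_0^{ij}}(x), x)$; this matrix inherits the uniform ellipticity of $F$ and, via interior $C^{2,\al}$ regularity for $U_{\phi_0^{ij}}$, lies in $C^\al_{\loc}(\Om)$. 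Theorem~\ref{thm:MainLinear} then supplies, for each $L^{ij}$, a density representation $\mu^{ij}(x,dh) = K^{ij}(x,h)\,\sig(dh)$ with $c_1 d(x,h)^{-n-1} \leq K^{ij}(x,h) \leq c_2 d(x,h)^{-n-1}$ and a bounded drift $b^{ij}$. Integrating these pointwise density bounds against $\sig$ (comparable to $n$-dimensional Lebesgue measure on balls smaller than the injectivity radius) produces the ring estimate (i)(a) and the lower bound (i)(b) with $\eta = n$, while (ii) is exactly the drift bound from Theorem~\ref{thm:MainLinear}.

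The main obstacle is \emph{uniformity} of the constants across the entire family $(i,j)$. While uniform ellipticity of $A^{ij}$ is automatic from that of $F$, uniformity of the H\"older seminorm $[A^{ij}]_{C^\al}$ requires a uniform interior $C^{2,\al}$ bound on $U_{\phi_0^{ij}}$ as $\phi_0^{ij}$ ranges over the base functions generating the min-max. To secure this, I would normalize the min-max by exploiting its invariance under addition of affine data and constants, thereby restricting attention to base points $\phi_0^{ij}$ with controlled $C^{1,\al}$ norm, on which standard fully-nonlinear $C^{2,\al}$ theory yields the required uniform bounds depending only on the ellipticity of $F$, its $C^\al$ dependence in $x$, and the $C^2$ character of $\partial\Om$. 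With this uniformity in place, the constants $C_1$, $C_2$, $R$, $\eta$, and the bound on $b^{ij}$ depend only on universal data, as asserted.
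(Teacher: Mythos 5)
Your first step --- establishing that $\I$ is Lipschitz from $C^{1,\al}(\partial\Om)$ to $C^\al(\partial\Om)$ and has the GCP, then invoking \cite{GuSc-2016MinMaxNonlocalarXiv} to get the min-max over linear operators of the form (\ref{eqFN:LinearFamilyInMinMax}) --- is exactly the paper's route to (\ref{eqIn:NonlinearMinMax}). The divergence, and the gap, is in how you propose to prove (i)(a) and (i)(b). You want to identify each $L^{ij}$ with the D-to-N map of a linearization $\Tr(A^{ij}(x)D^2V)=0$ with $A^{ij}=F_M(D^2U_{\phi_0^{ij}},x)$ and then quote Theorem \ref{thm:MainLinear} to get pointwise density bounds $c_1 d(x,h)^{-n-1}\le K^{ij}\le c_2 d(x,h)^{-n-1}$, hence (i)(b) with $\eta=n$. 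This fails for two reasons. First, the min-max of \cite{GuSc-2016MinMaxNonlocalarXiv} is an abstract representation of a Lipschitz GCP map; the $L^{ij}$ are not produced as, and need not coincide with, D-to-N maps of linearized bulk equations (the solution map $\phi\mapsto U_\phi$ is only Lipschitz, not differentiable, so there is no such linearization to speak of in general). Second, even granting a formal linearization, $A^{ij}$ is only H\"older in the \emph{interior}: the $C^{2,\al}$ estimates for $U_{\phi_0^{ij}}$ degenerate at $\partial\Om$ when $\phi_0^{ij}\in C^{1,\al}(\partial\Om)$, so $[A^{ij}]_{C^\al}$ blows up near the boundary and the Green's function/Poisson kernel estimates of Proposition \ref{propBG:GreenFunctionVariableCoef} (which require H\"older coefficients up to a $C^{1,1}$ boundary) are not available with uniform constants. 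Your proposed normalization of the base functions does not cure this, since the degeneration is caused by the boundary, not by the size of $\|\phi_0^{ij}\|_{C^{1,\al}}$. Note also that your conclusion would give $\eta=n$ and a two-sided pointwise density bound, which is strictly stronger than the theorem claims; the paper explicitly remarks that in the nonlinear setting one only expects some $\eta$ possibly much larger than $n$, precisely because the linear density estimates are unavailable.

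What the paper does instead is sandwich every $L^{ij}$ between the \emph{extremal} D-to-N operators $M^\pm$ of Lemma \ref{lemWP:DtoNExtremals} (Proposition \ref{propFN:ExtremalLowerBoundLevyOperator}), so that testing against bump functions $\phi$ with $\phi(x)=0$, $\grad\phi(x)=0$ reduces both estimates to bounds on $\partial_\nu U^\pm_\phi(x)$ for the Pucci operators --- uniformity in $i,j$ is then automatic. The ring estimate (i)(a) comes from rescaling $\Om$ to $(1/r)\Om$, comparing with a fixed $r$-independent solution on a half-ball, and applying the Hopf lemma, which produces the $r^{-1}$ scaling. The lower bound (i)(b) comes from approximating the Pucci solution by solutions of linear equations with smooth coefficients (Lemma \ref{lemFN:ApproxByLinEqs}) and running a Harnack-chain argument on the Green's function (Lemmas \ref{lemFN:GreenMassOnABall} and \ref{lemFN:HarmonicMeasureGreenIntegralEstimate}); the exponent $\eta$ is generated by iterating Harnack's inequality and is not claimed to equal $n$. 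You would need to replace the linearization step with an argument of this type (or supply a genuinely new uniform boundary regularity theory for the linearized coefficients) for the proof to go through.
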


\begin{rem}
	We want to point out to the reader that in both Theorems \ref{thm:MainLinear} and \ref{thm:MainNonlinear}, the \emph{existence and boundedness} of the $b$ and $\mu$ (or $f^{ij}$, $c^{ij}$, $b^{ij}$, $\mu^{ij}$ in the min-max) are \emph{not new}.  In the linear case, this is a result of Bony-Courr\`ege-Priouret \cite{BonyCourregePriouret-1966FellerSemiGroupOnDifferentialVariety}, and in the nonlinear case by two of the authors \cite{GuSc-2016MinMaxNonlocalarXiv}.  The new part of these results are the properties (i)-(ii) in Theorem \ref{thm:MainLinear} and (i) in Theorem \ref{thm:MainNonlinear}. 
\end{rem}

\begin{rem}
	The ring estimate in Theorem \ref{thm:MainNonlinear} (i-a), although not sufficient for regularity theory yet, at least shows the the L\'evy measures, $\mu^{ij}$, contain the same amount of mass on every ring, $B_{2r}(x)\setminus B_r(x)$, as does the 1/2-Laplacian.  The lower bound in (i-b) at least shows that the L\'evy measures, $\mu^{ij}$ are supported everywhere on $\partial\Om$, but that possibly they have a scaling that is other than the one for surface measure (scaling by the power $n$), and we note that one expects $\eta$ in this situation to be large (so balls may carry small mass), as opposed to the more regular situation where one has $\eta=n$.
\end{rem}

\begin{rem}[$\partial\Om\in C^3$]
	In both Theorems \ref{thm:MainLinear} and \ref{thm:MainNonlinear}, there is an assumption that $\partial\Om$ should be $C^3$.  This is a technical assumption arising from the way that the main result in \cite{GuSc-2016MinMaxNonlocalarXiv} was proved.  There, it is a technical assumption made for simplicity, and so also here it plays the same role.  The more important assumptions arise from results about boundary regularity of solutions of elliptic equations, in which case, they depend on $C^{1,\al}$ or $C^2$ ingredients, depending upon the type of equation.
\end{rem}

\begin{rem}[Boundedness of $\Om$]
	In all of our results, we have assumed that $\Om$ is bounded.  This assumption is made purely for simplicity and uniformity, and we note that in many contexts that the outcomes of all of the theorems will remain true, provided the supporting results we invoke have modifications to unbounded domains.
\end{rem}


\subsection{Some Notation}\label{sec:Notation}

Here we collect a list of various notation used in this paper.

\begin{itemize}
	\item We will use capitalized function names, e.g. $U$ (and others), to denote functions defined in the domain, $\overline\Om$, and we will use lower case function names, e.g. $u$ (and others), to denote functions on the boundary, $\partial\Om$.  A function solving an equation with prescribed boundary data would then appear as $U_u$.
	\item $\Om$ is an open bounded domain in $\real^{n+1}$, that is connected, and with $\partial\Om$ having an injectivity radius, ${\rm inj}(\partial \Om)$,  bounded from below by $r_0>0$.
		
	\item $n$ is the dimension of $\partial\Om$, with $\Om\subset\real^{n+1}$.
	\item $\mu(x,\cdot)$ or $\mu^{ij}(x,\cdot)$ is a L\'evy measure used in the integro-differential representation of $\I$.
	\item $d(x,y)$ is the geodesic distance between $x$ and $y$ when $x,y\in\partial\Om$.
	\item $\sig$ is surface measure on $\partial\Om$
	\item $\nu(x)$ is the \emph{inward} normal vector to $\partial\Om$ at $x\in\partial\Om$.
	\item $B_r(x)\subset\partial\Om$ is a geodesic ball in $\partial\Om$ and $B^{n+1}_r(x)\subset\real^{n+1}$ is a Euclidean ball in $\real^{n+1}$.
	\item The word \textbf{universal} is used for constants that depend only on dimension, ellipticity, $\partial\Om$, and the coefficients of $F$ in (\ref{eqIn:BulkFDiv})--(\ref{eqIn:BulkFNonlinear}).
	\item $G(x,y)$ will be the Green's function for $\Om$ and a linear operator of the form (\ref{eqIn:BulkFDiv}) or (\ref{eqIn:BulkFNondiv}).
\end{itemize}


\subsection{Some Definitions}\label{sec:Definitions}

\begin{DEF}\label{def:GCP}
	The global comparison property for $I: C^{1,\al}(X)\to C^0(X)$ requires that for all $u,v\in C^{1,\al}(X)$ such that $u(x)\leq v(x)$ for all $x\in X$ and such that for some $x_0$, $u(x_0)=v(x_0)$, then the operator $I$ satisfies $I(u,x_0)\leq I(v,x_0)$.  That is to say that $I$ preserves ordering of functions on $X$ at any points where their graphs touch.
	
\end{DEF}

\begin{DEF}\label{def:MaximalPucci}
	The second order $(\lam,\Lam)$-Pucci extremal operators are defined as $\M^-$ and $\M^+$, for a function, $U$ that is second differentiable at $x$, via
	\begin{align*}
		\M^-(U,x) = \min_{\lam\Id\leq B\leq \Lam\Id}\left( \Tr(BD^2U(x)) \right)\ \ \text{and}\ \ 
		\M^+(U,x) = \max_{\lam\Id\leq B\leq \Lam\Id}\left( \Tr(BD^2U(x)) \right).
	\end{align*}
	When $\{v_i\}_{i=1,\dots,n+1}$ are the eigenvalues of $D^2U(x)$, an equivalent representation is
	\begin{align*}
		\M^-(U,x) = \Lam\sum_{v_i\leq 0}v_i + \lam\sum_{v_i>0} v_i
		\ \ \text{and}\ \ 
		\M^+(U,x) = \lam\sum_{v_i\leq 0}v_i + \Lam\sum_{v_i>0} v_i.
	\end{align*}
\end{DEF}

\begin{DEF}
	We say that $F$ is $(\lam,\Lam)$-uniformly elliptic in the cases (\ref{eqIn:BulkFDiv}) and (\ref{eqIn:BulkFNondiv}) if
	\begin{align*}
		\lam\Id\leq A(x)\leq \Lam\Id\ \ \text{for all}\ \ x\in\Om,
	\end{align*}
	and in the case of (\ref{eqIn:BulkFNonlinear}) if for all $U,V\in C^2(\Om)$,
	\begin{align*}
		\M^-(U-V,x)\leq F(D^2U,x)-F(D^2V,x)\leq \M^+(U-V,x)\ \ \text{for all}\ \ x\in\Om.
	\end{align*}
\end{DEF}

We will also require the notion of harmonic measure associated to a linear equation; for details see \cite[Introduction]{CaffFabesMortSalsa-1981BoundBehavEllipticIUMJ} for the divergence case and \cite[Definition 5.16]{Kenig-1993PotentialThoeryNonDiv} for the non-divergence case.

\begin{DEF}\label{def:HarmonicMeasure}
	Given linear operators, $F$ (or sometimes $L$, below), as in (\ref{eqIn:BulkFDiv}) or (\ref{eqIn:BulkFNondiv}), it is well known that when $\phi\in C(\partial\Om)$ is prescribed, there exists a unique $U_\phi\in C(\overline \Om)$ that solves (\ref{eqIn:BulkExtensionGeneric}).  Thus, for a fixed $x$, the mapping $x\mapsto U_\phi(x)$ is well defined, and thanks to the comparison principle for these equations, is a non-negative linear functional on $C(\partial\Om)$.  We take, as a definition, that for $x$ fixed, the unique Borel measure that represents this functional to be called the $F$-Harmonic measure (or the $L$-Harmonic measure), and we denote this measure as $\om_x$.  That is to say, $\om_x$, is uniquely characterized by 
	\[
	\forall\ \phi\in C(\partial\Om),\ U_\phi(x)=\int_{\partial\Om} \phi(y)\om_x(dy).
	\]
\end{DEF}

\begin{DEF}\label{def:GreensFun}
	Given linear operators, $F$ (or sometimes $L$, below), as in (\ref{eqIn:BulkFDiv}) or (\ref{eqIn:BulkFNondiv}), the Green's function (see e.g. \cite[Section 2]{Kenig-1993PotentialThoeryNonDiv} or \cite{LittStamWein-1963RegularPointsEllipticEqAnnScNoPisa}) is the unique function such that whenever $f$ is given (in an appropriate function space) and $U$ is the unique solution of
	\[
	\begin{cases}
		F(U)=f\ &\text{in}\ \Om\\
		U=0\ &\text{on}\ \partial\Om,
	\end{cases}
	\]
	then $U$ is uniquely represented as
	\[
	U(x)=\int_{\Om}f(y)G(x,y)dy.
	\]
\end{DEF}

It is a standard result that if $\partial \Om$ is of class $C^k$, then the tangent bundle $T(\partial \Om)$ is of class $C^{k-1}$, consequently so is the Riemannian metric induced on $\partial \Om$ by the canonical metric on $\real^{n+1}$. It can then be seen that the Riemannian exponential mapping and the geodesic distance squared on $\partial \Om$ are respectively of class $C^{k-2}$ and $C^{k-1}$ (see \cite[Footnotes, Chapter II, Section 2]{Sakai96}).

In this paper, we will use the same characterization of a H\"older continuous vector field on $\partial \Om$ which is used in \cite{GuSc-2016MinMaxNonlocalarXiv}. We record it here for convenience. 
\begin{DEF}\label{def:HolderVectorField}
If $V: \partial \Om \to T(\partial \Om)$ is a vector field defined on $\partial \Om$, we say $V\in C_{loc}^\alpha(\partial \Om)$ if for any point $x_0\in \partial \Om$, there exists an open neighborhood $\mathcal{O}$ of $x_0$ and a constant $C>0$ such that
\begin{align*}
 \abs{V(x)-P_{y\to x}V(y)}_g\leq Cd(x, y)^\alpha,\quad \forall x, y\in \mathcal{O},
\end{align*}
where $P_{y\to x}$ is the parallel transport of a vector in $T_y(\partial \Om)$ to $T_x(\partial \Om)$, along the unique geodesic from $y$ to $x$, defined by the Levi-Civita connection of the induced Riemannian metric on $\partial \Om$.
\end{DEF}


\subsection{Background}\label{sec:Background}

The simplest possible case of our map, $\I$,  in (\ref{eqIn:DefOfDtoN}) is when $\Om=\real^{n+1}_+$ (the upper half space), and $F(U)=\Delta U$.  This means that $U_\phi$ is the harmonic extension of $\phi$, and it is well known that $\I(\phi)=-(-\Delta)^{1/2}\phi$.  This corresponds to the generator of the boundary process, after a time rescaling, of a reflected Brownian motion in $\real^{n+1}_+$, recording the locations of the process restricted to the plane $\real^{n}\times\{0\}$.  This well known fact was generalized to bounded domains, $\Om$, as above, by Hsu in \cite{Hsu-1986ExcursionsReflectingBM}, which characterizes the generator of this boundary process as $\I$, and gives some properties, such as those in Theorem \ref{thm:MainLinear}, above.  This is in the context of the well-known relationship between D-to-N mappings and generators for boundary processes of general reflected diffusions (rescaled using their local time), and some good references are e.g. \cite[Sec. 8]{SatoUeno-1965MultiDimDiffBoundryMarkov} and \cite[Chp. IV, Sec. 7]{IkedaWatanabe-1981SDE}.  Thus, one can see Theorem \ref{thm:MainLinear} as a generalization of \cite{Hsu-1986ExcursionsReflectingBM} to more general diffusion processes with H\"older diffusion coefficients.

There is, however, a different reason for our goals in this paper beyond simply to extend \cite{Hsu-1986ExcursionsReflectingBM} to more general linear and nonlinear settings.  This is the desire to give a more precise link between D-to-N mappings and integro-differential equations, with the hopes of leveraging new results for integro-differential operators.  Developments of approximately the last 20 years have led to good understanding of the regularity for solutions of equations that involve linear and fully nonlinear integro-differential operators similar to (\ref{eqIn:IntDiffLinear})-- at least in the case that $\partial\Om=\real^n$.  Thus, it seems reasonable to further pursue the link between the integro-differential theory and D-to-N mappings, with the hope that recent results in the integro-differential theory could possibly lead to new understanding or results involving Neumann problems.  Two of the developments in the integro-differential world that could be of use are, broadly speaking: regularity results that use only the roughest bounds on coefficients and L\'evy measures-- we can call these Krylov-Safonov type estimates (we mention some specific results in the next paragraphs); and the recent result of two of the authors that shows that under certain conditions (established below) that the D-to-N mapping for fully nonlinear equations can be represented as a min-max over linear integro-differential operators \cite{GuSc-2016MinMaxNonlocalarXiv}.   In order to connect these two developments, one must, of course, gain further information about the $\mu$ (or $\mu^{ij}$) that appear in Theorems \ref{thm:MainLinear} and \ref{thm:MainNonlinear}.

  In its simplest presentation, a Krylov-Safonov result basically says that for a linear operator such as in (\ref{eqIn:IntDiffLinear}),  the solutions, say $u$, of 
\[
Lu=f\ \ \text{in}\ \ B_1
\]
satisfy the H\"older estimate, for a universal $C$,
\begin{align}\label{eqIn:KSgeneric}
[u]_{C^\al(B_{1/2})}\leq C( \norm{u}_{L^\infty(\real^n)}+\norm{f}_{L^\infty(B_1)}).
\end{align}
This has been pursued under various lists of assumptions from many various authors, and we list some explicitly below.
This estimate may seem simple, but it's importance as one of the few compactness tools for non-divergence form equations cannot be overstated.  This result was a cornerstone of the local, second order, elliptic theory, dating back the the original work of Krylov-Safonov \cite{KrSa-1980PropertyParabolicEqMeasurable}.

In recent years, Krylov-Safonov type results have been obtained for nonlocal operators like (\ref{eqIn:IntDiffLinear}) by many authors, and here we mention some of the results in this direction, and we indicate that this list is by no means complete.  Bass-Levin \cite{BaLe-2002Harnack} proved (\ref{eqIn:KSgeneric}) for the class where (for $\al\in(0,2)$)
\begin{align}\label{eqIn:BassLevinBounds}
	b\equiv0,\ \ \mu(x,dh)=k(x,h)dh,\ \ k(x,-h)=k(x,h),\ \  \text{and}\ \ 
	\frac{\lam}{\abs{h}^{n+\al}}\leq k(x,h)\leq \frac{\Lam}{\abs{h}^{n+\al}}.
\end{align}
Bass-Kassmann \cite{BaKa-05Holder}, Song-Vondracek \cite{SongVondracek-2004harnack}, and subsequently Silvestre \cite{Silv-2006Holder} (also including slightly more general $k$) extended this to the same setting, except that variable exponents, $\al(x)$, could be allowed:
\begin{align*}
	\frac{\lam}{\abs{h}^{n+\al(x)}}\leq k(x,h)\leq \frac{\Lam}{\abs{h}^{n+\al(x)}},\ \ 
	\text{for}\ \ \al(x)\in(0,2-c),\ c>0.
\end{align*} 
Finally, along this line of attack, with similar assumptions as in (\ref{eqIn:BassLevinBounds}), Caffarelli-Silvestre \cite{CaSi-09RegularityIntegroDiff} obtained (\ref{eqIn:KSgeneric}) for those kernels that satisfy
\begin{align*}
	k(x,-h)=k(x,h)\ \ \text{and}\ \ \frac{(2-\al)\lam}{\abs{h}^{n+\al}}\leq k(x,h)\leq \frac{(2-\al)\Lam}{\abs{h}^{n+\al}},
\end{align*}
and furthermore, their proof obtained the result (\ref{eqIn:KSgeneric}) in a way that is independent of $\al$ close to $2$ (the assumption that includes the factor $(2-\al)$ is consistent with the $\al/2$-Laplacian).  This made \cite{CaSi-09RegularityIntegroDiff} the first integro-differential result to contain the original result of Krylov-Safonov as a limiting case (as $\al\to2$).

The five previously mentioned works (\cite{BaKa-05Holder}, \cite{BaLe-2002Harnack}, \cite{CaSi-09RegularityIntegroDiff}, \cite{Silv-2006Holder}, \cite{SongVondracek-2004harnack}) have been generalized in approximately three overlapping directions: (i) relaxing the symmetry assumption, $k(x,-h)=k(x,h)$ in (\ref{eqIn:BassLevinBounds}); (ii) relaxing the lower bounds, $\lam\abs{h}^{-d-\al}\leq k(x,h)$, in (\ref{eqIn:BassLevinBounds}); and (iii) extending the theory to include parabolic equations.  Results that have relaxed requirements on the symmetry of $k$ include: Chang Lara \cite{Chan-2012NonlocalDriftArxiv}, Chang Lara - D\'avila \cite{ChDa-2012NonsymKernels} and \cite{ChangLaraDavila-2016HolderNonlocalParabolicDriftJDE}, Schwab-Silvestre \cite{SchwabSilvestre-2014RegularityIntDiffVeryIrregKernelsAPDE}.  Results that have relaxed requirements on the lower bounds on $k$ include: Bjorland-Caffarelli-Figalli \cite{BjCaFi-2012NonlocalGradDepOps}, Guillen-Schwab \cite{GuSc-12ABParma}, Kassmann-Mimica \cite{KassmannMimica-2013NondegJumpStochProApp}, Kassmann-Rang-Schwab \cite{KassRangSchwa-2014RegularityDirectionalINDIANA}, and \cite{SchwabSilvestre-2014RegularityIntDiffVeryIrregKernelsAPDE}.  Results that have extended the above to the parabolic setting include: \cite{ChDa-2012RegNonlocalParabolicCalcVar}, \cite{ChangLaraDavila-2016HolderNonlocalParabolicDriftJDE}, and \cite{SchwabSilvestre-2014RegularityIntDiffVeryIrregKernelsAPDE}.  Finally, we note that there is an extension that is completely separate from all of the others listed here in that it obtains Krylov-Safonov estimates in the situation that the exponent, $\al$, in (\ref{eqIn:BassLevinBounds}) is allowed to go down to $\al=0$ as well as allows for scaling laws that are more general than (\ref{eqIn:BassLevinBounds}); this is the work of Kassmann-Mimica \cite{KassmannMimica-2013IntrinsicScalingJEMS}, followed up by the work of Kim-Kim-Lee \cite{KimKimLee-2014RegularityRegularlyVaryingPOTENANALYS} .

There are many uses for the D-to-N, and we would like to point out the work of Hu-Nicholls \cite{HuNicholls-2005AnalyticityDtoNSIMA}, where they study the dependence of the D-to-N on changes to the domain, $\Om$ (for a slightly different family of equations).  There are also many useful references for related issues in \cite{HuNicholls-2005AnalyticityDtoNSIMA}.

We conclude this section by mentioning that only in the simplest setting that $\Om=\real^{n+1}_+$ and $F$ is given by (\ref{eqIn:BulkFDiv}) or (\ref{eqIn:BulkFNondiv}) will some of the above mentioned results involving non-symmentric $k$ apply to the operator $\I$ that results from Theorem \ref{thm:MainLinear}.  In the case that $F$ is nonlinear or in all cases when $\partial\Om$ is not flat, none of the above mentioned results apply to $\I$.  This suggests room for more study on this issue, and we briefly elaborate on this in Section \ref{sec:Open}.

\section{Some Useful Tools For Boundary Behavior}\label{sec:Tools}

In this section, we collect some various results that will be useful later on.
The following proposition is about the boundary behavior of the Green's function for $C^{1,\al}$ domains.  The upper bound is a special case of the estimates for equations with H\"older coefficients in nice domains that can be found in Gr\"uter-Widman \cite{GruterWidman-1982GreenFunUnifEllipticManMath}.  The lower bound is a consequence of the Harnack inequality and is outlined in the proof of the main result of Zhao \cite{Zhao-1984UniformBoundednessConditionalGaugeCMP}.

\begin{prop}[Constant Coefficient]\label{propBG:GreenFunctionConstantCoef}
	Assume that $\partial\Om$ is a $C^{1,\al}$ boundary. For the constant coefficient operator, i.e. $Lu=\Delta U$, it holds that for the Green's function, $G(x,y)$, for all $x,y\in\Om$
	\begin{align*} 
		c_1\frac{d(x)d(y)}{\abs{x-y}^{n+1}}
		\leq G(x,y)
		\leq c_2\frac{d(x)d(y)}{\abs{x-y}^{n+1}}.
	\end{align*} 
	Here we use $d(x)=d(x,\partial\Om)$.  ($d(x,\partial\Om)=\inf_{y\in\partial\Om}\abs{x-y}$, and recall, $\Om\subset\real^{n+1}$)
\end{prop}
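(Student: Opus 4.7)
The plan is to combine well-known interior and boundary estimates for harmonic functions in a way that matches the two references cited in the remark: I would split the argument into an upper bound (Gr\"uter-Widman style) and a lower bound (Harnack/boundary comparison, as in Zhao), and in each case reduce the estimate to an ``interior'' Green's function bound at auxiliary ``good'' points.

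For the upper bound, I would treat two regimes separately. In the \emph{interior regime} $\abs{x-y} \le \tfrac{1}{2}\min(d(x),d(y))$, the standard fundamental-solution comparison $G(x,y) \le C\abs{x-y}^{-(n-1)}$ is available (by comparing $G(x,\cdot)$ with the $\real^{n+1}$-fundamental solution through the maximum principle), and since in this regime $d(x)d(y) \ge \tfrac{1}{4}\abs{x-y}^2$, this already gives $G(x,y) \le C' d(x)d(y)/\abs{x-y}^{n+1}$. In the \emph{boundary regime}, where $\abs{x-y}$ is comparable to or larger than $\min(d(x),d(y))$, I would use that $G(x,\cdot)$ is harmonic on $\Om \setminus \{x\}$ and vanishes on $\partial\Om$, so that flattening the boundary and applying the boundary Lipschitz estimate for harmonic functions in a $C^{1,\al}$ domain yields $G(x,y)/d(y) \le C\, G(x,y^*)/d(y^*)$ for a reference point $y^*$ with $d(y^*)\sim\abs{x-y}$; iterating the same estimate in the $x$ variable via the symmetry $G(x,y)=G(y,x)$ produces the factor $d(x)$, and combining with the interior bound at $(x^*,y^*)$ gives $G(x,y)\le c_2 d(x)d(y)/\abs{x-y}^{n+1}$. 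This is the content of \cite{GruterWidman-1982GreenFunUnifEllipticManMath}.

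For the lower bound, I would follow the strategy implicit in \cite{Zhao-1984UniformBoundednessConditionalGaugeCMP}. Select auxiliary points $x^*, y^* \in \Om$ with $d(x^*),d(y^*) \sim \abs{x-y}$ and $\abs{x-x^*},\abs{y-y^*} \lesssim \abs{x-y}$, which exist because a $C^{1,\al}$ boundary admits interior cones of a universal opening. A Harnack-chain argument in the interior (with a uniformly bounded number of links, depending only on the geometry of $\partial\Om$) combined with the free-space Green's function lower bound yields $G(x^*,y^*) \ge c\abs{x-y}^{-(n-1)}$. To descend to $(x,y)$, I would apply the boundary Harnack inequality twice: since $G(\cdot,y^*)$ is positive and harmonic near $x$ and vanishes on $\partial\Om$, one has $G(x,y^*) \ge c\, G(x^*,y^*)\, d(x)/d(x^*) \sim G(x^*,y^*)\, d(x)/\abs{x-y}$, and applying the symmetric comparison for $G(x,\cdot)$ near $y$ yields $G(x,y) \gtrsim G(x,y^*)\, d(y)/\abs{x-y}$. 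Multiplying these comparisons produces $G(x,y) \ge c_1 d(x)d(y)/\abs{x-y}^{n+1}$.

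The main obstacle is making every geometric constant genuinely universal. One must check that the Harnack chain connecting $x^*$ and $y^*$ uses a number of links depending only on the $C^{1,\al}$ character of $\partial\Om$ and on the injectivity radius $r_0$, and that the boundary Harnack inequality applies with a constant depending only on the same data. Both facts are classical in $C^{1,\al}$ (and already Lipschitz) domains, but collecting the constants so that they are truly universal is the technical core of the argument. Once this is in place, the two cited references supply each of the one-sided bounds.
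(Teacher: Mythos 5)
Your proposal is correct and follows essentially the same route as the paper, which does not prove this proposition itself but attributes the upper bound to Gr\"uter--Widman and the lower bound to the Harnack-based argument outlined in Zhao --- exactly the two one-sided arguments you sketch. The interior/boundary case split, the comparison $G(x,y)/d(y)\sim G(x,y^*)/d(y^*)$ at auxiliary points, and the Harnack-chain lower bound are the standard ingredients of those references, and your accounting of where the universal constants come from is the right technical concern.
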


After taking normal derivatives of the Green's function, this gives in \cite{Zhao-1984UniformBoundednessConditionalGaugeCMP},

\begin{prop}[Poisson Kernel Constant Coefficients, \cite{Zhao-1984UniformBoundednessConditionalGaugeCMP}]\label{propBG:PoissonKernelConstCoeff}
	Assume that $\partial\Om$ is a $C^{1,\al}$ boundary. For the constant coefficient operator, i.e. $LU=\Delta U$, it holds that for the Poisson kernel, $P(x,y)$, for all $x\in\Om$ and $z\in\partial\Om$
	\begin{align*} 
		c_1\frac{d(x)}{\abs{x-z}^{n+1}}
		\leq P(x,z)
		\leq c_2\frac{d(x)}{\abs{x-z}^{n+1}}.
	\end{align*}
	(Recall, $\Om\subset\real^{n+1}$)
\end{prop}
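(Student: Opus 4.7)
The plan is to reduce the Poisson kernel bounds directly to the Green's function bounds from Proposition \ref{propBG:GreenFunctionConstantCoef} via the classical representation $P(x,z) = \partial_{\nu_z} G(x,z)$ for $z\in\partial\Om$, where the derivative is taken in the second variable in the inward normal direction. Since $\partial\Om$ is $C^{1,\alpha}$ and the right-hand side of the equation satisfied by $y\mapsto G(x,y)$ is supported away from a neighborhood of $z$ (whenever $z\neq x$), standard boundary $C^{1,\alpha}$ estimates guarantee that this normal derivative exists and is attained as the one-sided derivative at $z$.

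First I would fix $x\in\Om$ and $z\in\partial\Om$ and consider the points $y_t = z + t\nu(z)$ for small $t>0$, where $\nu(z)$ is the inward normal. Using that $\partial\Om$ is $C^{1,\alpha}$ and that $y_t$ lies inside the tubular neighborhood of $\partial\Om$ for $t$ small, we have $d(y_t)=t$ exactly, and $|x-y_t|\to|x-z|$ as $t\to 0$. Since $G(x,z)=0$, we can write
\begin{align*}
P(x,z) = \lim_{t\to 0^+} \frac{G(x,y_t)-G(x,z)}{t} = \lim_{t\to 0^+} \frac{G(x,y_t)}{d(y_t)}.
\end{align*}
Dividing the two-sided Green's function bound of Proposition \ref{propBG:GreenFunctionConstantCoef} by $d(y_t)$ gives, for every sufficiently small $t>0$,
\begin{align*}
c_1 \frac{d(x)}{|x-y_t|^{n+1}} \leq \frac{G(x,y_t)}{d(y_t)} \leq c_2 \frac{d(x)}{|x-y_t|^{n+1}},
\end{align*}
and passing to the limit yields the claimed estimate.

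The only technical point — and the step I would flag as the main potential obstacle — is justifying that the above one-sided limit exists and equals $P(x,z)$. This is not a difficulty for the Laplacian on a $C^{1,\alpha}$ domain, but rather invokes boundary Schauder regularity: since $G(x,\cdot)$ is harmonic and vanishes on $\partial\Om$ in a neighborhood of $z$ (away from the pole at $x$), $G(x,\cdot)$ is $C^{1,\alpha}$ up to $\partial\Om$ near $z$, which makes the normal derivative defined and continuous there, and identifies it with the Poisson kernel via the standard Green's representation formula for solutions of $\Delta U = 0$ in $\Om$ with prescribed boundary data. With this identification in hand, no further work is needed: the estimate is inherited verbatim from Proposition \ref{propBG:GreenFunctionConstantCoef}.
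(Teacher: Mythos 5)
Your argument is correct and is exactly the route the paper takes (the paper simply remarks that the Poisson kernel bounds follow ``after taking normal derivatives of the Green's function'' and defers to Zhao): divide the two-sided bound of Proposition \ref{propBG:GreenFunctionConstantCoef} by $d(y_t)$ and pass to the limit, using boundary Schauder regularity to identify the limit with $P(x,z)$. The only (harmless) imprecision is the claim that $d(y_t)=t$ exactly; on a merely $C^{1,\alpha}$ domain one only has $d(y_t)=t\left(1+O(t^{\alpha})\right)$, but since the ratio tends to $1$ the limiting estimate is unaffected.
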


It turns out that the same behavior was extended to variable coefficients by respectively Cho \cite{Cho-2006TwoSidedGlobalEstGreenFunDParabolicDiv} and Hueber-Sieveking \cite{HueberSieveking-1982UniformBoundsQuotientsGreenFunctions}.  We record this here

\begin{prop}[Variable Coefficients]\label{propBG:GreenFunctionVariableCoef}
	\begin{enumerate}[(a)]
		\item (Hueber-Sieveking \cite{HueberSieveking-1982UniformBoundsQuotientsGreenFunctions}) Assume that
		\[
		LU=\Tr(A(x)D^2U(x)) + B(x)\cdot\grad U(x) + C(x)U(x),
		\]
		with H\"older coefficients and that $\partial\Om$ is $C^{1,1}$.  Then Proposition \ref{propBG:GreenFunctionConstantCoef} remains true.
		\item (Cho \cite{Cho-2006TwoSidedGlobalEstGreenFunDParabolicDiv}) Assume that 
		\[
		LU=\Div(A(x)\grad U(x)),
		\]
		with H\"older coefficients, and that $\partial\Om$ is $C^{1,\al}$. Then Proposition \ref{propBG:GreenFunctionConstantCoef} remains true.
	\end{enumerate}
\end{prop}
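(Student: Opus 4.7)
Because the statement is essentially a citation of two results from the literature, the natural proposal is not to reprove them from scratch but to indicate the main strategy by which Hueber--Sieveking and Cho establish the two-sided estimate, and verify that the hypotheses here fit. The plan is to observe that under H\"older-continuous coefficients and a $C^{1,\al}$ (resp.\ $C^{1,1}$) boundary, every ingredient behind Proposition \ref{propBG:GreenFunctionConstantCoef} has a variable-coefficient analogue: an interior bound on the fundamental solution of order $|x-y|^{1-n}$ (Littman--Stampacchia--Weinberger in the divergence case, Krylov--Safonov together with Bauman/Fabes--Stroock in the non-divergence case); an interior Harnack inequality; and a boundary Hopf/Carleson-type estimate for $L$-harmonic functions vanishing on a $C^{1,\al}$ portion of the boundary.

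For the upper bound, one fixes $y\in\Om$ and regards $x\mapsto G(x,y)$ as an $L$-solution of $LG(\cdot,y)=0$ in $\Om\setminus\{y\}$ that vanishes on $\partial\Om$. Two regimes are handled separately. When $|x-y|$ is comparable to $\max(d(x),d(y))$, a scaling/localization argument together with interior bounds on the fundamental solution for $L$ with H\"older coefficients provides $G(x,y)\le C|x-y|^{1-n}$, and multiplying by the factors $d(x)/|x-y|$ and $d(y)/|x-y|$ coming from two applications of the Hopf-type boundary estimate (one in each variable, using the $C^{1,\al}$ resp.\ $C^{1,1}$ flatness of $\partial\Om$ to get the linear vanishing) yields the claimed $d(x)d(y)|x-y|^{-n-1}$ rate. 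When $|x-y|$ is much smaller than the distance to the boundary, one compares with the free-space fundamental solution for $L$ and uses Schauder/$C^{1,\al}$ perturbation to freeze coefficients, which gives the upper bound directly. In the non-divergence case the Green's function is not symmetric, so the two boundary-vanishing factors must be obtained independently from the operator $L$ and its adjoint; H\"older continuity of $A$ ensures both operators satisfy the same boundary regularity theory.

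For the lower bound, the strategy is the standard Harnack chain. One first shows that for interior points $x_0,y_0$ with $d(x_0), d(y_0)$ of order $|x_0-y_0|$, repeated application of the interior Harnack inequality along a chain of balls of comparable radius joining $x_0$ and $y_0$ gives $G(x_0,y_0)\ge c|x_0-y_0|^{1-n}$. Then, for a general boundary-approaching pair $(x,y)$, one uses the Carleson/boundary Harnack principle, which holds in $C^{1,\al}$ domains for operators with H\"older coefficients in both divergence and non-divergence form, to transfer from $(x,y)$ to nondegenerate interior points $(x_0,y_0)$ at the cost of the linear vanishing factors $d(x)/|x-y|$ and $d(y)/|x-y|$. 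Combining with the interior bound produces the matching lower bound.

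The main obstacle in carrying this out in detail is the non-symmetry of $G$ in the non-divergence setting of (a) and the technical fact that boundary Harnack / Carleson estimates require at least a $C^{1,1}$ boundary for non-divergence operators (which is why Hueber--Sieveking's hypothesis is stronger than Cho's); in the divergence case one can weaken to $C^{1,\al}$ because the adjoint operator is of the same type. Once this subtlety is accounted for, the rest of the proof is an organized repackaging of standard boundary regularity tools for second-order linear equations with H\"older coefficients, and we simply refer the reader to \cite{HueberSieveking-1982UniformBoundsQuotientsGreenFunctions} and \cite{Cho-2006TwoSidedGlobalEstGreenFunDParabolicDiv} for complete arguments.
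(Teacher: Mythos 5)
Your proposal takes essentially the same route as the paper: Proposition \ref{propBG:GreenFunctionVariableCoef} is stated there as a pure citation, with no proof supplied beyond the references, so deferring to Hueber--Sieveking and Cho is exactly what the authors do, and your sketch of the standard mechanism (interior fundamental-solution bounds and Harnack chains for the lower bound; Hopf/Carleson-type linear vanishing applied once in each variable to produce the factors $d(x)$ and $d(y)$) is a reasonable account of what lies behind those references.

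The one substantive point the paper makes that you omit concerns part (b): Cho's theorem in \cite{Cho-2006TwoSidedGlobalEstGreenFunDParabolicDiv} is a two-sided bound on the \emph{heat kernel} $p(x,y,t)$ of the killed process, not on the elliptic Green's function directly. The elliptic estimate is obtained from the identity $G(x,y)=\int_0^\infty p(x,y,t)\,dt$, and one must check that integrating the parabolic two-sided bounds in time reproduces precisely $d(x)d(y)\abs{x-y}^{-n-1}$. Without this reduction the citation does not literally yield the stated inequality, so it should be included. A smaller caveat: your explanation that the $C^{1,1}$ hypothesis in (a) is forced because ``boundary Harnack requires $C^{1,1}$ for non-divergence operators'' is not accurate --- Carleson and boundary Harnack estimates for non-divergence operators hold already in Lipschitz domains (cf.\ the Fabes--Garofalo--Mar\'in-Malav\'e--Salsa reference invoked elsewhere in the paper); the stronger boundary hypothesis in Hueber--Sieveking is tied instead to the barrier arguments giving the exact linear rate of vanishing and to the handling of the adjoint operator. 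Neither issue affects the validity of citing the two references, but the time-integration step is genuinely needed to connect Cho's result to the statement as written.
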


We note that Cho \cite{Cho-2006TwoSidedGlobalEstGreenFunDParabolicDiv} proves the estimate for the Heat kernel, but the result for the elliptic problem follows from the identity
\begin{align*}
	G(x,y) = \int_0^\infty p(x,y,t)dt,
\end{align*}
where $p(x,y,t)$ is the heat kernel, or transition density function for the corresponding killed process in $\Om$ (i.e. the fundamental solution of the heat equation with zero boundary data).

Finally, we state here a relationship between the $F$-Harmonic measure and the Green's function for a linear equation.  We note that the Green's function for non-divergence equations are well known not to be well behaved pointwise; however, in light of the fact that we are dealing with equations with H\"older coefficients, this is a situation where the Green's function is defined pointwise, furthermore we record the actual result we use in the next lemma.

\begin{prop}[Harmonic Measure - Green Function estimates]\label{propBG:HarmonicMeasureGreenFunction} Let $\{\om_x\}_{x\in\Om}$ be the $F$-harmonic measure where $F$ is defined by \eqref{eqIn:BulkFDiv} or \eqref{eqIn:BulkFNondiv}, and $G$ be the Green's function for $F$ on $\Om$.

Then there are universal constants $\rho_0$, $C_1$, $C_2>0$ and $s_0>1$ such that for any $\rho\in (0, \rho_0)$, $x\in\partial \Om$, and $y\in \Om\setminus B_{s_0 \rho}(x)$, for the divergence equation (\ref{eqIn:BulkFDiv}) it holds
\begin{align*}
C_1\rho^{n-1}G(y, x+\rho \nu(x))\leq \omega_{y}(\partial\Om\cap B_\rho(x))\leq C_2\rho^{n-1}G( y,x+\rho \nu(x)),
\end{align*}
and for the non-divergence equation (\ref{eqIn:BulkFNondiv}) it holds that
\begin{align*}
	\frac{C_1}{\rho^2}\int_{\tilde B_\rho}G(y,z)dz\leq 
	\omega_{y}(\partial\Om\cap B_\rho(x))\leq
	\frac{C_2}{\rho^2}\int_{B^{n+1}_\rho(x)\intersect\Om} G(y,x)dz
\end{align*}
where $\tilde B_\rho=B^{n+1}(x+\frac{1}{4}\nu(x))$
\end{prop}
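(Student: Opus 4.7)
The plan is to reduce the two-sided bound to a comparison at a single corkscrew point and then propagate it to all distant $y$ via a boundary Harnack principle. Fix $\rho<\rho_0$ small, where $\rho_0$ is determined by the $C^{1,\al}$ character of $\partial\Om$, set $y_\rho:=x+\rho\nu(x)$, and for a universal $s_0>1$ to be chosen, $z:=x+s_0\rho\nu(x)$. Then $d(y_\rho,\partial\Om)$, $d(z,\partial\Om)$, $|y_\rho-z|$, $|y_\rho-x|$, $|z-x|$ are all comparable to $\rho$.

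\textbf{Divergence case.} View both sides of the claimed estimate as functions of $y$:
\begin{align*}
u(y):=\om_y(\partial\Om\cap B_\rho(x)),\qquad v(y):=\rho^{n-1}G(y,y_\rho).
\end{align*}
Each is $F$-harmonic in $\Om\setminus B^{n+1}_{s_0\rho}(x)$ (the pole $y_\rho$ of $G(\cdot,y_\rho)$ lies inside $B^{n+1}_{s_0\rho}(x)$), and each vanishes continuously on $\partial\Om\setminus \overline{B_{s_0\rho}(x)}$. The boundary Harnack principle for divergence form operators with H\"older coefficients on $C^{1,\al}$ domains then forces the ratio $u/v$ to lie between two universal constants on $\Om\setminus B^{n+1}_{s_0\rho}(x)$, provided it is so at the single point $z$. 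A standard barrier constructed in local normal coordinates using the $C^{1,\al}$ regularity of $\partial\Om$ yields $u(z)\gtrsim 1$ for $s_0$ universal, and Proposition \ref{propBG:GreenFunctionVariableCoef}(b) applied at $(z,y_\rho)$ gives $G(z,y_\rho)\sim d(z)d(y_\rho)|z-y_\rho|^{-n-1}\sim \rho^{1-n}$, so $v(z)\sim 1$. Hence $u\sim v$ throughout the required region.

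\textbf{Non-divergence case.} Apply the same scheme with
\begin{align*}
v(y):=\rho^{-2}\int_{\tilde B_\rho} G(y,w)\,dw.
\end{align*}
Averaging against a set supported at scale $\rho$ near $x$ is necessary because, although H\"older coefficients make $G$ pointwise well defined (Proposition \ref{propBG:GreenFunctionVariableCoef}(a)), the natural object conjugate to harmonic measure in the non-divergence setting is an integrated average of the Green's function over a region separated from $\partial\Om$. The scales balance: $|\tilde B_\rho|\sim \rho^{n+1}$ and $G(y,w)$ for $w\in\tilde B_\rho$ behaves like $d(y)\rho|y-w|^{-n-1}$, so $v(z)\sim 1$. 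The boundary Harnack principle for non-divergence operators (in the form of Fabes-Safonov or Bauman) then propagates the comparison $u\sim v$ from $z$ to all $y\in\Om\setminus B^{n+1}_{s_0\rho}(x)$.

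\textbf{Main obstacle.} The delicate ingredient is the boundary Harnack principle in the non-divergence setting: available versions require careful matching between the regularity of $\partial\Om$, the H\"older continuity of the coefficients, and the ellipticity. Verifying that one of them applies to the class in (\ref{eqIn:BulkFNondiv}) under our hypotheses is essentially bookkeeping, but central. A secondary issue is establishing the uniform lower bound $\om_z(\partial\Om\cap B_\rho(x))\gtrsim 1$, which reduces to a barrier construction sensitive to the $C^{1,\al}$ character of $\partial\Om$; qualitatively standard, but it must be made quantitative with constants depending only on universal parameters.
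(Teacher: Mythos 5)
Your single-point computation at the corkscrew point $z$ is sound, and your overall scheme (compare $u(y)=\om_y(\partial\Om\cap B_\rho(x))$ with a normalized Green's function, pin the ratio at one point, propagate) is the right shape; it is essentially the scheme of the references the paper leans on. For the record, the paper does not prove this proposition from scratch: it cites \cite[Lemma 2.2]{CaffFabesMortSalsa-1981BoundBehavEllipticIUMJ} for the divergence case and the proof of \cite[Lemma 5.18]{Kenig-1993PotentialThoeryNonDiv} for the non-divergence case, and it only reproduces one piece in detail, namely the non-divergence lower bound, in Lemma \ref{lemFN:HarmonicMeasureGreenIntegralEstimate} (with the barrier of Lemma \ref{lemFN:ExternalBallBarrier}). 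That argument is worth comparing with yours: for the lower bound on $\om_y$ no boundary Harnack principle is needed at all. One shows $w(y)=\int_{\tilde B_\rho}G(y,z)\,dz$ is at most $c\rho^2$ on $\tilde B_\rho$ (exterior-ball barrier plus the maximum principle), that $u\geq c>0$ on $\tilde B_\rho$ (boundary H\"older decay of $1-u$ plus interior Harnack), and then the ordinary maximum principle in $\Om\setminus\tilde B_\rho$ — both functions are $L$-harmonic there and $u\geq 0=w$ on $\partial\Om$ — yields $u\geq C\rho^{-2}w$ everywhere. The same elementary comparison gives your divergence-case lower bound once $G(\cdot,y_\rho)\leq C\rho^{1-n}$ on a small sphere around the pole.

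The genuine gap is in the upper bound and in how you propagate the comparison. The global statement you need — that $u/v$ is comparable to $u(z)/v(z)$ on \emph{all} of $\Om\setminus B_{s_0\rho}(x)$, uniformly in $\rho$ — is not a routine consequence of the local boundary Harnack principle: connecting the corkscrew points of the various boundary balls back to the single reference point $z$ by interior Harnack chains costs a factor $C^{N}$ with $N\sim\log(1/t)$ at scale $t$, which is not uniform. The standard proof of that global comparison theorem goes \emph{through} the very estimate you are trying to prove (one represents each solution in terms of the harmonic measure of $\Delta_{s_0\rho}(x)$ via the kernel function), so your argument is at serious risk of circularity; indeed, in \cite{CaffFabesMortSalsa-1981BoundBehavEllipticIUMJ} and \cite{FabesGarofaloMarinMalaveSalsa-1988FatouTheoremsNonDivREVMATIBERO} this lemma is an \emph{input} to the boundary Harnack principle, not an output. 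The upper bound is the direction that actually requires new mechanisms you have not supplied: in the divergence case, the Carleson estimate $G(y,\cdot)\leq CG(y,x+\rho\nu(x))$ on $B_{2\rho}(x)\intersect\Om$ together with the cutoff/Caccioppoli identity $\om_y(\Delta_\rho)\leq\int_\Om A\grad_zG(y,z)\cdot\grad\varphi(z)\,dz$; in the non-divergence case, the normalized adjoint-solution machinery of \cite{FabesGarofaloMarinMalaveSalsa-1988FatouTheoremsNonDivREVMATIBERO}. Either supply those ingredients, or do as the paper does and cite the two references directly, reserving a self-contained argument for the lower bound only.
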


\begin{proof}
 The divergence case \eqref{eqIn:BulkFDiv} is an immediate consequence of \cite[Lemma 2.2]{CaffFabesMortSalsa-1981BoundBehavEllipticIUMJ}.

 For the non-divergence case, both bounds appear in the proof of \cite[Lemma 5.18]{Kenig-1993PotentialThoeryNonDiv}.  We also use the lower bound explicitly as a crucial step in Section \ref{sec:FullyNonlinear}, and so the proof of that inequality appears in the proof of Lemma \ref{lemFN:HarmonicMeasureGreenIntegralEstimate}, below.
 
\end{proof}

\begin{lem}[Comparison of intrinsic and extrinsic annuli]\label{lemTOOL:RingsAmbientIntrinsic}
There exists an $\epsilon_0>0$ such that for any $r\in (0, \epsilon_0)$ and $x_0\in\partial\Om$,
	\begin{align*}
		(B^{n+1}_{(7/4)r}(x_0)\setminus B^{n+1}_{(5/4)r}(x_0))\intersect\partial\Om
		\subset
		(B_{2r}(x_0)\setminus B_r(x_0))
		\subset
		(B^{n+1}_{(9/4)r}(x_0)\setminus B^{n+1}_{(3/4)r}(x_0))\intersect\partial\Om.
	\end{align*}
\end{lem}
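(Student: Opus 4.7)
The plan is to reduce both inclusions to a single quantitative chord--arc comparison on $\partial\Om$: for nearby points $x_0, x \in \partial \Om$, the chord length $|x - x_0|$ and the intrinsic distance $d(x, x_0)$ differ only by a cubic error controlled by the second fundamental form of $\partial\Om$. Specifically, I would first establish that there exist a constant $C > 0$ and $\epsilon_1 > 0$, uniform over $x_0 \in \partial\Om$ (by compactness of $\partial\Om$ and its $C^3$ character), such that
\begin{align*}
|x - x_0| \;\leq\; d(x,x_0) \;\leq\; |x-x_0| + C |x-x_0|^3,\qquad \forall\, x \in \partial\Om \text{ with } |x-x_0| < \epsilon_1.
\end{align*}
The lower inequality is the classical ``chord no longer than arc length'' estimate in Euclidean space, applied to the length-minimizing geodesic in $\partial\Om$ connecting $x_0$ to $x$. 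The upper inequality is obtained by locally representing $\partial\Om$ as the graph of a $C^2$ function $\psi$ over the tangent space $T_{x_0}\partial\Om$, with $\psi(0) = 0$ and $\nabla\psi(0) = 0$, and taking the graph curve lying above the straight segment joining $0$ to the tangential projection of $x - x_0$ as a competitor for the geodesic; the estimate $\sqrt{1 + |\nabla\psi|^2} = 1 + O(|y|^2)$ together with the uniform bound on $\|\psi\|_{C^2}$ (from compactness of $\partial\Om$) then yields the cubic error.

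With this comparison in hand, I would choose $\epsilon_0 \in (0, \min(\epsilon_1, r_0))$ small enough that $C\epsilon_0^2 \leq 1/16$. For the first inclusion, take $x \in (B^{n+1}_{(7/4)r}(x_0) \setminus B^{n+1}_{(5/4)r}(x_0)) \cap \partial\Om$ with $r < \epsilon_0$. The trivial lower bound gives $d(x, x_0) \geq |x - x_0| > (5/4) r > r$, while the upper bound yields
\begin{align*}
d(x, x_0) \,\leq\, |x-x_0|(1 + C|x-x_0|^2) \,\leq\, (7/4)r\,(1 + C r^2) \,\leq\, (7/4) r \cdot (17/16) \,<\, 2r,
\end{align*}
so $x \in B_{2r}(x_0) \setminus B_r(x_0)$. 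For the second inclusion, take $x \in B_{2r}(x_0) \setminus B_r(x_0)$. From $|x-x_0| \leq d(x, x_0) < 2r < (9/4)r$ the upper Euclidean bound is immediate, and rearranging the upper comparison gives
\begin{align*}
|x-x_0| \,\geq\, d(x,x_0) - C d(x,x_0)^3 \,=\, d(x,x_0)(1 - C d(x,x_0)^2) \,\geq\, r\,(1 - 4 C r^2) \,\geq\, (3/4) r,
\end{align*}
where the last inequality uses $4C r^2 \leq 1/4$. This places $x$ in $(B^{n+1}_{(9/4)r}(x_0) \setminus B^{n+1}_{(3/4)r}(x_0)) \cap \partial\Om$, as desired.

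I do not anticipate any serious obstacle: the only substantive ingredient is the uniform chord--arc comparison, which is a standard consequence of the $C^2$ regularity and compactness of $\partial\Om$ (indeed, the $C^3$ hypothesis of the paper is comfortably more than enough, and the injectivity radius lower bound $2 r_0$ ensures the normal-coordinate/graph representation is available on a uniform scale). The remainder of the argument is bookkeeping with the explicit radii $(3/4, 5/4, 7/4, 9/4)$, which was presumably chosen to leave room for this cubic correction.
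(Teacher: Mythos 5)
Your proof is correct and follows essentially the same route as the paper: the lower bound is chord-no-longer-than-arc, and the upper bound comes from using the graph of $\partial\Om$ over $T_{x_0}\partial\Om$ as a competitor curve for the geodesic. The only cosmetic difference is that the paper settles for the multiplicative comparison $d(x_0,h)\leq \tfrac{9}{8}\abs{x_0-h}$ (obtained from a uniform gradient bound on the graph function), which already clears the radii $5/4,7/4,3/4,9/4$ without shrinking $\epsilon_0$ further, whereas you derive the sharper additive cubic error and then absorb it.
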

\begin{proof}
By definition of geodesic distance, it is clear that for any $h$, $x_0\in \partial \Om$,
\begin{align*}
 \abs{x_0- h}\leq d(x_0, h).
\end{align*}
Now since $\partial \Om$ is $C^2$, for any $x\in \partial \Om$ there exists $\epsilon_x>0$ and $\rho_x\in C^2(B^n_{\epsilon_x}(x))$ with (after a rotation of coordinates)
\begin{align*}
 B^{n+1}_{\epsilon_x}(x)\cap \Om&=\{(y', y^{n+1})\in B^{n+1}_{\epsilon_x}(x)\ :\  \rho_x(y')<y^{n+1}\},\\
 \rho_x(x)=0,&\quad \grad \rho_x(x)=0,\\
 \sqrt{1+\norm{\rho_x}_{C^1(B^n_{\epsilon_x}(x))}^2}&\leq \frac{9}{8}.
\end{align*}
By compactness of $\partial \Om$ we can see $\epsilon_1:=\inf_{x\in\partial\Om}\epsilon_x>0$. Now let $x_0\in \partial \Om$, rotate coordinates to identify $\real^n$ with $T_{x_0}(\partial \Om)$. We claim that the projection $\pi_{\real^n}(B_{\epsilon_1}(x_0))$ into $\real^n$ is contained in $B^n_{\epsilon_1}(x_0)$. Suppose this is not the case, so $h=(h', h^{n+1})\in B_{\epsilon_1}(x_0)$ but $h'\not\in B^n_{\epsilon_1}(x_0)$. We can take a length minimizing $C^1$ curve $\gamma: [0, 1]\to \partial \Om$ connecting $x_0$ to $h$, which we assume constant speed, and let $t_0:=\inf\{t\in [0, 1]\mid \pi_{\real^n}(\gamma(t))\not \in B^n_{\epsilon_1}(x_0)\}>0$. Then we calculate
\begin{align*}
\epsilon_1> d(x_0, h)\geq \int_0^{t_0}\abs{\dot\gamma(t)}dt\geq \int_0^{t_0}\abs{\pi_{\real^n}(\dot\gamma(t))}dt\geq \epsilon_1,
\end{align*}
a contradiction.

Thus if $h\in B_{\epsilon_1}(x_0)$, we can write $h:=(y', \rho_{x_0}(y'))$ for some $y'\in B^n_{\epsilon_1}(x_0)$, then
\begin{align*}
 d(x_0, h)\leq \int_0^1\abs{(y', (\grad\rho_{x_0}(ty')\cdot y'))}dt\leq \sqrt{\abs{y'}^2+\norm{\rho_{x_0}}_{C^1(B^n_{\epsilon_1}(x_0))}^2\abs{y'}^2}\leq \frac{9}{8}\abs{y'}\leq \frac{9}{8}\abs{x_0-h},
\end{align*}
and the claimed inclusions immediately follow.
\end{proof}



\section{Well-Posedness and Lipschitz Nature of the D-to-N}

Here we record the relatively straightforward facts that $\I$ defined via (\ref{eqIn:BulkExtensionGeneric}) and (\ref{eqIn:DefOfDtoN}) is in fact well defined and a Lipschitz mapping $C^{1,\al}\to C^\al$ in each of the three instances (\ref{eqIn:BulkFDiv}), (\ref{eqIn:BulkFNondiv}), (\ref{eqIn:BulkFNonlinear}).

\begin{lem}\label{lemWP:DtoNWellDefined}
	If  the equation (\ref{eqIn:BulkExtensionGeneric}) satisfies the assumptions 
	\begin{enumerate}[(i)]
		\item $\displaystyle \phi\in C^{1,\al}(\partial\Om)\ \implies\ U_\phi\in C^{1,\al'}(\overline\Om)\ \text{for some}\ 0<\al'<\al \ \text{(Regularity)}$
		\item $\displaystyle \phi\leq\psi\ \text{on}\ \partial\Om\ \implies\ U_\phi\leq U_\psi\ \text{in}\ \Om\ \ \text{(Comparison)}$
		\item $\displaystyle \phi\in C(\partial\Om)\ \implies U_\phi\ \text{exists and is unique in the (weak, viscosity, strong, or classical sense)}$,
	\end{enumerate}
then the D-to-N mapping, $\I$, defined in (\ref{eqIn:BulkExtensionGeneric}) and (\ref{eqIn:DefOfDtoN}) is well defined and has the global comparison property over $C^{1,\al}(\partial\Om)$.
\end{lem}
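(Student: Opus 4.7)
The plan is to unpack what must be shown: first, that the map $\phi\mapsto \I(\phi,\cdot)$ is well-defined pointwise on $\partial\Om$ and yields an element of $C^{\al}(\partial\Om)$; second, that $\I$ enjoys the global comparison property of Definition~\ref{def:GCP}. Both statements are direct consequences of hypotheses (i)--(iii), so the proof is a short assembly; the only care needed is in passing an inequality to the limit when computing the normal derivative.

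For well-definedness, hypothesis (iii) asserts that $U_\phi$ exists and is unique for each $\phi\in C(\partial\Om)$, so $\phi\mapsto U_\phi$ is an unambiguous map from $C^{1,\al}(\partial\Om)$ into the appropriate solution space. Applying (i) upgrades this by giving $U_\phi\in C^{1,\al'}(\overline\Om)$. In particular, $\grad U_\phi$ extends continuously up to $\partial\Om$, so the inward normal derivative $\partial_\nu U_\phi$ exists pointwise and belongs to $C^{\al'}(\partial\Om)\subset C^{\al'}(\partial\Om)$. Consequently $\I: C^{1,\al}(\partial\Om)\to C^{\al'}(\partial\Om)$ is a well-defined single-valued map.

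For the global comparison property, suppose $\phi,\psi\in C^{1,\al}(\partial\Om)$ satisfy $\phi\leq\psi$ on $\partial\Om$ and $\phi(x_0)=\psi(x_0)$ for some $x_0\in\partial\Om$. Hypothesis (ii) yields $U_\phi\leq U_\psi$ throughout $\Om$, and by continuity up to the boundary one has $U_\phi(x_0)=\phi(x_0)=\psi(x_0)=U_\psi(x_0)$, so the two solutions touch at $x_0$. For $t>0$ sufficiently small, the point $x_0+t\nu(x_0)$ lies in $\Om$ (since $\nu$ is the inward normal), and there $U_\phi\leq U_\psi$. Subtracting the common boundary value at $x_0$ and dividing by $t>0$ gives
\[
\frac{U_\phi(x_0+t\nu(x_0))-U_\phi(x_0)}{t}\ \leq\ \frac{U_\psi(x_0+t\nu(x_0))-U_\psi(x_0)}{t}.
\]
Property (i) guarantees that both sides converge as $t\to 0^+$ to $\partial_\nu U_\phi(x_0)$ and $\partial_\nu U_\psi(x_0)$, respectively, so passing to the limit yields $\I(\phi,x_0)\leq \I(\psi,x_0)$, which is exactly the global comparison property.

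There is no substantial obstacle in this argument; the only point that merits attention is ensuring that the one-sided directional derivative obtained in this way really coincides with $\partial_\nu U_\phi(x_0)$. This is why hypothesis (i) is formulated as $C^{1,\al'}$ regularity up to the boundary rather than only in the interior, and why one must take $\nu$ to be the inward normal so that the difference quotients can be formed along a sequence lying inside $\Om$, where the comparison principle (ii) actually delivers the ordering.
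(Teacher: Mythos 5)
Your argument is correct and follows essentially the same route as the paper: well-definedness from (iii) plus (i), and the comparison property by forming the inward normal difference quotients at the touching point, ordering them via (ii), and passing to the limit using the boundary regularity from (i). Nothing to add.
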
 

\begin{proof}
	First of all, the assumption of existence and uniqueness of $U_\phi$, combined with the assumption (Regularity) at least show that $\I$ is well defined as a map from $C^{1,\al}(\partial\Om)$ to $C^{\al'}(\partial\Om)$.  The only thing to check is the comparison property.  However, $\I$ inherits this directly from the assumption (Comparison) that is made on $F$.   Indeed, let $u$, $v$, and $x\in\partial\Om$ be given such that $u\leq v$ on $\partial\Om$ and that $u(x)=v(x)$.  Let $\nu(x)$ be the \emph{inward} normal vector at $x$ and let $h>0$ be small enough.  Thus by (Comparison), we see that
	\begin{align*}
		U_u(x+h\nu(x))-U_u(x)\leq U_v(x+h\nu(x))-U_v(x),
	\end{align*}
	and thus since $\partial_\nu U_u$ and $\partial_\nu U_v$ exist by (Regularity), we conclude 
	\begin{align*}
		\partial_\nu U_u(x)\leq\partial_\nu U_v(x).
	\end{align*}
\end{proof}

Just for completeness, we include a list of results which establish the assumptions of comparison and regularity made in Lemma \ref{lemWP:DtoNWellDefined} for each of the cases of $F$ in (\ref{eqIn:BulkFDiv})--(\ref{eqIn:BulkFNonlinear}).

\begin{lem}\label{lemWP:FSatisfiesComparisonAndRegularity}
	If $F$ is given as (\ref{eqIn:BulkFDiv}), (\ref{eqIn:BulkFNondiv}), or (\ref{eqIn:BulkFNonlinear}), then the equation (\ref{eqIn:BulkExtensionGeneric}) satisfies the assumptions of (regularity), (comparison), (existence/uniqueness) listed in Lemma \ref{lemWP:DtoNWellDefined}.
\end{lem}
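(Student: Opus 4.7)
The proof is essentially a bookkeeping exercise of collecting standard results in elliptic theory for each of the three cases, so my plan is to treat each case separately and indicate the appropriate references. In all three settings, the existence and uniqueness part of assumption (iii) is standard once one has a comparison principle: Perron's method provides a solution taking continuous boundary data, and comparison gives uniqueness.

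For the divergence-form case (\ref{eqIn:BulkFDiv}), I would invoke the Lax--Milgram theorem to obtain a weak solution for $\phi\in H^{1/2}(\partial\Om)$ (with density arguments giving the case $\phi\in C(\partial\Om)$), and the weak maximum principle for divergence-form equations with bounded measurable coefficients (Gilbarg--Trudinger, Chapter 8) gives comparison. For the boundary regularity claim (i), I would cite the boundary $C^{1,\al'}$ estimates for divergence-form equations with H\"older coefficients on $C^{1,\al}$ domains, which can be found in Lieberman's boundary regularity results (e.g. his paper on oblique derivative problems) or deduced by flattening the boundary and applying interior Schauder estimates up to the boundary as in Gilbarg--Trudinger Chapter 8.

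For the non-divergence case (\ref{eqIn:BulkFNondiv}), existence/uniqueness of classical solutions in $C^{2,\al'}(\Om)\cap C(\overline\Om)$ follows from the method of continuity combined with global Schauder estimates (Gilbarg--Trudinger Chapter 6). Comparison is the classical Aleksandrov--Bakelman--Pucci / strong maximum principle for strong solutions with bounded coefficients. The boundary $C^{1,\al'}$ regularity assumption (i) is again a consequence of the global Schauder theory, valid here because we assume $A\in C^\al(\Om)$ and $\partial\Om\in C^{1,\al}$.

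For the fully nonlinear case (\ref{eqIn:BulkFNonlinear}), I would work in the viscosity sense: comparison is a special case of the Crandall--Ishii--Lions theory (see their User's Guide, or Caffarelli--Cabr\'e Chapter 5, for second order uniformly elliptic equations with continuous ingredients), existence then follows by Perron's method, and boundary $C^{1,\al'}$ regularity is the combination of Krylov's boundary estimates for fully nonlinear equations, as exposed in Caffarelli--Cabr\'e Chapter 9 and refined in Silvestre--Sirakov's boundary regularity results for fully nonlinear elliptic equations; these require the $C^{1,\al}$-character of $\partial\Om$ and the H\"older dependence of $F$ on $x$.

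The step that requires the most care is the regularity assertion (i) up to the boundary in the fully nonlinear case, since the classical Caffarelli--Cabr\'e interior estimates must be combined with boundary flattening and the Krylov boundary H\"older gradient estimates; however, this is by now a well-documented part of the theory and presents no essential novelty. The writeup will therefore be short and citation-heavy rather than computational.
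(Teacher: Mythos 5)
Your overall strategy is the same as the paper's: this lemma is a citation-collection exercise, and for the divergence case (Gilbarg--Trudinger Chapter 8) and the fully nonlinear case (Ishii--Lions comparison for uniqueness, Silvestre--Sirakov for boundary $C^{1,\alpha'}$ regularity) your references match what the paper actually uses. One point of bookkeeping: the paper does not use Perron's method plus comparison for existence in the nonlinear case as the organizing principle, but this is an equivalent and standard route.

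There is, however, one step in your proposal that would fail as written: in the non-divergence case (\ref{eqIn:BulkFNondiv}) you assert that the boundary $C^{1,\alpha'}$ regularity ``is again a consequence of the global Schauder theory.'' Global Schauder estimates (GT Chapter 6) require $\partial\Om\in C^{2,\alpha}$ and boundary data in $C^{2,\alpha}(\partial\Om)$ to produce solutions in $C^{2,\alpha}(\overline\Om)$; here the datum $\phi$ is only $C^{1,\alpha}(\partial\Om)$, so Schauder theory gives $C^{2,\alpha}_{loc}(\Om)$ in the interior but says nothing about differentiability of $U_\phi$ up to $\partial\Om$, which is precisely what assumption (i) of Lemma \ref{lemWP:DtoNWellDefined} demands (the normal derivative at the boundary must exist for $\I$ to be defined at all). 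The paper instead obtains the up-to-the-boundary $C^{1,\alpha'}$ estimate from the Krylov-type boundary oscillation decay of the quotient $U(x)/d(x,\partial\Om)$, i.e.\ \cite[Theorem 9.31]{GiTr-98}; alternatively, the Silvestre--Sirakov boundary estimates you already cite for the fully nonlinear case subsume the linear non-divergence case. So the fix is only a change of citation, but the theorem you named does not deliver the conclusion you need. A related, smaller issue: the method of continuity also requires $C^{2,\alpha}$ data, so for merely continuous $\phi$ the existence statement (iii) in the non-divergence case should come from Perron's method with barriers (e.g.\ GT Theorem 6.13) rather than the continuity method.
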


\begin{proof}[Proof of Lemma \ref{lemWP:FSatisfiesComparisonAndRegularity}]
	In the case of (\ref{eqIn:BulkFDiv}), weak solutions are defined via the bilinear form,
	\begin{align*}
		B(u,v)= \int_{\Om}\grad u(x)\cdot A(x)\grad v(x)dx,
	\end{align*}
	and the establishment of uniqueness, comparison, and regularity under the assumption that $A\in C^\al(\Om)$ can be found in \cite[Chp 8]{GiTr-98}.
	
	In the case of (\ref{eqIn:BulkFNondiv}), ``weak'' solutions can be understood as either strong solutions e.g. \cite[Chp 9]{GiTr-98} or viscosity solutions e.g. \cite{CrIsLi-92} (both cases are equivalent for this equation and these assumptions).  Note, in this case, $U_\phi$ is actually $C^{2,\al}_{loc}(\Om)$, but not in the whole of $\overline{\Om}$ as we only assume $\phi\in C^{1,\al}(\partial\Om)$.  The assumptions that $A\in C^\al(\Om)$ and is uniformly elliptic imply uniqueness, comparison, and regularity, and can be found in \cite[Chp 9]{GiTr-98}, among other sources.
	
	Finally, in the case of (\ref{eqIn:BulkFNonlinear}), the ``locally H\"older coefficients'' assumption means that for all symmetric matrices, $P$,
	\begin{align*}
		\abs{F(P,x)-F(P,y)}\leq C\abs{x-y}^\al(1+\norm{P}),
	\end{align*}
	and for simplicity we can assume that $F(0,x)\equiv0$. The notion of weak solution is viscosity solutions, e.g. \cite{CrIsLi-92}. We refer to \cite[Theorem 1.4]{SilvestreSirakov-2013boundary} for the validity of the $C^{1,\al}$ estimates in (regularity), and to \cite[Theorem III.1]{IshiiLions-1990ViscositySolutions2ndOrder} for the validity of the comparison result, which in this context also gives the uniqueness of the viscosity solution.
\end{proof}

Just as in the case of second order elliptic equations, it will be useful to understand which operators govern the ellipticity class for the D-to-N, $\I$, in the context of $F$ in (\ref{eqIn:BulkFNonlinear}) (i.e. the analogous objects to the Pucci operators for second order equations that appear in Definition \ref{def:MaximalPucci}).  It turns out that a convenient choice of these extremal operators are the D-to-N operators for the second order extremal operators.  The following observation is copied from \cite[Lemma 3.3]{GuSc-2014NeumannHomogPart1DCDS-A}:

\begin{lem}\label{lemWP:DtoNExtremals}
	In (\ref{eqIn:BulkExtensionGeneric}), take $F$ to be respectively $\M^-$ and $\M^+$ which are in Definition (\ref{def:MaximalPucci}), and take respectively $U^-_\phi$ and $U^+_\phi$ to be the corresponding solutions of (\ref{eqIn:BulkExtensionGeneric}).  Define the boundary extremal operators as
	\begin{align}\label{eqWP:ExtremalForBoundaryOpDef}
		M^-(\phi,x):= \partial_\nu U^-_\phi(x)\ \text{and}\ M^+(\phi,x):=\partial_\nu U^+_\phi(x).
	\end{align}
	Then $M^\pm$ are extremal operators for $\I$ in the sense that for all $u,v\in C^{1,\al}(\partial\Om)$ and for all $x\in\partial\Om$
	\begin{align}\label{eqWP:ExtremalInequalityBoundaryOps}
		M^-(u-v,x)\leq \I(u,x)-\I(v,x)\leq M^+(u-v,x).
	\end{align}
\end{lem}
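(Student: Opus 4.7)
The plan is to reduce the inequality (\ref{eqWP:ExtremalInequalityBoundaryOps}) to a pointwise comparison of second order extensions $U_u$, $U_v$, $U^-_{u-v}$, and $U^+_{u-v}$ in $\overline{\Om}$, and then to read off the result by taking a one-sided normal derivative at the boundary.

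First, I would use the uniform ellipticity assumption on $F$ in the form stated in the definition preceding Lemma \ref{lemWP:DtoNExtremals}: applied to the pair $(U_u, U_v)$, and using that both solve $F(U,x)=0$ in $\Om$, this yields
\begin{align*}
	\M^-(U_u - U_v, x) \leq F(D^2U_u, x) - F(D^2U_v, x) = 0 \leq \M^+(U_u - U_v, x),
\end{align*}
interpreted in the viscosity sense. Thus $W := U_u - U_v$ is a viscosity subsolution of $\M^+(W)=0$ and a viscosity supersolution of $\M^-(W)=0$, and on $\partial\Om$ it agrees with $u-v$, which is the common boundary datum used to define $U^+_{u-v}$ and $U^-_{u-v}$.

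Second, I would invoke the standard comparison principle for the Pucci extremal equations (available in the setting of viscosity solutions as in \cite{IshiiLions-1990ViscositySolutions2ndOrder}; see also the reference list used in the proof of Lemma \ref{lemWP:FSatisfiesComparisonAndRegularity}) to obtain the pointwise sandwich
\begin{align*}
	U^-_{u-v}(x) \;\leq\; U_u(x)-U_v(x) \;\leq\; U^+_{u-v}(x), \quad x \in \overline{\Om}.
\end{align*}
At any fixed point $x_0 \in \partial\Om$ all three functions attain the common value $u(x_0)-v(x_0)$, so the three graphs touch at $x_0$ from the correct sides. In particular, for $h>0$ small enough that $x_0 + h\nu(x_0) \in \Om$, the differences $(U^\pm_{u-v} - (U_u - U_v))$ are signed and vanish at $x_0$, hence the inward difference quotients inherit the ordering
\begin{align*}
	\frac{U^-_{u-v}(x_0+h\nu(x_0)) - U^-_{u-v}(x_0)}{h}
	\leq \frac{W(x_0+h\nu(x_0)) - W(x_0)}{h}
	\leq \frac{U^+_{u-v}(x_0+h\nu(x_0)) - U^+_{u-v}(x_0)}{h}.
\end{align*}

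Third, letting $h \downarrow 0$ and using that each of the normal derivatives $\partial_\nu U^\pm_{u-v}(x_0)$ and $\partial_\nu U_u(x_0)$, $\partial_\nu U_v(x_0)$ exist thanks to the $C^{1,\alpha'}$ boundary regularity cited in Lemma \ref{lemWP:FSatisfiesComparisonAndRegularity}, the limit yields exactly
\begin{align*}
	M^-(u-v,x_0) \;\leq\; \I(u,x_0) - \I(v,x_0) \;\leq\; M^+(u-v,x_0),
\end{align*}
which is (\ref{eqWP:ExtremalInequalityBoundaryOps}). The only point requiring care is that the initial inequality relating $F(D^2U_u)-F(D^2U_v)$ to $\M^\pm(U_u-U_v)$ must be interpreted in the viscosity sense, since in the setting of (\ref{eqIn:BulkFNonlinear}) we cannot expect $U_u$, $U_v$ to be twice classically differentiable up to the boundary; this is the main (and standard) obstacle, handled by the usual doubling-of-variables argument behind the comparison results of \cite{IshiiLions-1990ViscositySolutions2ndOrder}.
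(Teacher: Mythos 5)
Your proof is correct and follows essentially the same route as the paper: use uniform ellipticity to show $U_u-U_v$ is a viscosity subsolution of $\M^+=0$ (and supersolution of $\M^-=0$), compare with $U^{\pm}_{u-v}$ which share the boundary datum $u-v$, and pass the resulting ordering to the inward normal derivative at the boundary. The paper writes out only the upper bound and notes the lower bound is analogous, but the substance is identical to what you propose.
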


\begin{proof}
Here $F$ is a fixed uniformly elliptic operator from (\ref{eqIn:BulkFNonlinear}).  For ease of presentation, we record the two different equations that are being used here:
	\begin{align}\label{eqWP:ExtremalLemWithRegularF}
		\begin{cases}
			F(D^2U,x)=0\ &\text{in}\ \Om\\
			U=\phi\ &\text{on}\ \partial\Om.
		\end{cases}
	\end{align}  
	and
	\begin{align}\label{eqWP:ExtremalLemWithMaximalPucciF}
		\begin{cases}
			\M^+(U,x)=0\ &\text{in}\ \Om\\
			U=\phi\ &\text{on}\ \partial\Om.
		\end{cases}
	\end{align}
		Let $U_u$ and $U_v$ be the unique solutions of (\ref{eqWP:ExtremalLemWithRegularF}) with respectively boundary data given by $\phi=u$ and $\phi=v$.  We will just prove the upper bound, and the lower bound follows analogously.

	We note that since $U_u$ and $U_v$ are respectively a viscosity sub and super solution of (\ref{eqWP:ExtremalLemWithMaximalPucciF}), then it follows that $U_u-U_v$ is a viscosity subsolution of 
	\begin{align*}
		0\leq \M^+(U_u-U_v).
	\end{align*}
	Hence, if $U^+_{(u-v)}$ is the solution to (\ref{eqWP:ExtremalLemWithMaximalPucciF}) with $\phi=u-v$, since $U^+_{(u-v)}$ and $U_u-U_v$ have the same boundary data, the comparison of sub and super solutions for (\ref{eqWP:ExtremalLemWithMaximalPucciF}) shows that
	\begin{align*}
		U_u-U_v\leq U^+_{(u-v)}\ \text{in}\ \Om\ \text{and}\ U_u-U_v=u-v=U^+_{(u-v)}\ \text{on}\ \partial\Om.
	\end{align*}
	Hence
	\begin{align*}
		\partial_\nu U_u-\partial_\nu U_v\leq \partial_\nu U^+_{(u-v)}=M^+(u-v),
	\end{align*}
	which concludes the lemma.
\end{proof}

\begin{lem}\label{lemWP:DtoNLipschitz}
	In all cases of (\ref{eqIn:BulkFDiv}), (\ref{eqIn:BulkFNondiv}), (\ref{eqIn:BulkFNonlinear}), there exists some choice of $\al'$ with $0<\al'<\al$, so the D-to-N, $\I$, is a Lipschitz mapping of $C^{1,\al}(\partial\Om)\to C^{\al'}(\partial\Om)$. $\I$ also satisfies the extra assumption  in \cite[Theorem 1.6 - (1.3)]{GuSc-2016MinMaxNonlocalarXiv}, which requires
	\begin{align}\label{eqWP:ExtraAssumptionNonlinearCourrege}
		\forall\ u,v\in C^{1,\al}(\partial\Om),\ \ 
		\norm{\I(u)-\I(v)}_{L^\infty(B_{r})}
		\leq C\left(\norm{u-v}_{C^{1,\al}(\overline{B_{2r}})} + \om(r)\norm{u-v}_{L^\infty(\partial\Om)}\right),
	\end{align}
	and $\om(r)\to0$ as $r\to\infty$.
\end{lem}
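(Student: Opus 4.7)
The plan is to treat the Lipschitz property and the localization inequality \eqref{eqWP:ExtraAssumptionNonlinearCourrege} as two applications of global boundary Schauder-type regularity: once applied to the full data $u-v$, and once to a one-step cutoff decomposition of $u-v$, while carefully tracking the cutoff scale.

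For the Lipschitz property, I would fix $u, v \in C^{1,\alpha}(\partial\Om)$ and set $W := U_u - U_v$. In the linear cases \eqref{eqIn:BulkFDiv}--\eqref{eqIn:BulkFNondiv}, $W$ itself solves the same equation with boundary data $u-v$, so the global $C^{1,\alpha'}$ boundary Schauder estimate recalled in the proof of Lemma \ref{lemWP:FSatisfiesComparisonAndRegularity} yields $\norm{W}_{C^{1,\alpha'}(\overline\Om)} \leq C\norm{u-v}_{C^{1,\alpha}(\partial\Om)}$ for some $\alpha' \in (0,\alpha)$. In the nonlinear case \eqref{eqIn:BulkFNonlinear}, Lemma \ref{lemWP:DtoNExtremals} shows that $W$ is simultaneously a viscosity subsolution of $\M^+(W)=0$ and a supersolution of $\M^-(W)=0$ in $\Om$, and the Silvestre--Sirakov global boundary estimate (cited in Lemma \ref{lemWP:FSatisfiesComparisonAndRegularity}) produces the same $C^{1,\alpha'}(\overline\Om)$ control. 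Restricting the inward normal derivative to $\partial\Om$ gives $\norm{\I(u)-\I(v)}_{C^{\alpha'}(\partial\Om)} \leq C\norm{u-v}_{C^{1,\alpha}(\partial\Om)}$.

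For \eqref{eqWP:ExtraAssumptionNonlinearCourrege}, I would fix $r>0$ smaller than the injectivity radius, choose a smooth cutoff $\chi:\partial\Om\to[0,1]$ with $\chi\equiv 1$ on $B_r$, supported in $B_{2r}$, and satisfying $\abs{\nabla_g^k \chi}_g \leq C_k r^{-k}$, and split $\psi:=u-v=\phi_1+\phi_2$ with $\phi_1:=\chi\psi$ and $\phi_2:=(1-\chi)\psi$. In the linear cases, linearity of $F$ gives $U_u - U_v = U_{\phi_1}+U_{\phi_2}$. Applying the Lipschitz step to $\phi_1$, whose $C^{1,\alpha}$ norm expands via Leibniz as $\norm{\phi_1}_{C^{1,\alpha}(\partial\Om)} \leq C\norm{\psi}_{C^{1,\alpha}(\overline{B_{2r}})} + Cr^{-(1+\alpha)}\norm{\psi}_{L^\infty(\partial\Om)}$, controls the near-field piece. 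For the far-field piece, $\phi_2$ vanishes identically on $B_r$, so a rescaled boundary Schauder estimate on a half-ball of scale $r$ centered at $x\in B_r$, together with the maximum-principle bound $\norm{U_{\phi_2}}_{L^\infty(\Om)} \leq \norm{\psi}_{L^\infty(\partial\Om)}$, produces $\abs{\partial_\nu U_{\phi_2}(x)} \leq Cr^{-1}\norm{\psi}_{L^\infty(\partial\Om)}$.

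In the nonlinear case the identity $U_u - U_v = U_{\phi_1}+U_{\phi_2}$ is lost, but the same cutoff argument still works after invoking Lemma \ref{lemWP:DtoNExtremals} to sandwich $\I(u)-\I(v)$ between $M^\pm(u-v)$: convexity of $\M^+$ combined with comparison shows that $U^+_{\phi_1}+U^+_{\phi_2}$ is a supersolution of $\M^+=0$ with boundary data $\phi_1+\phi_2$, hence $U^+_{\phi_1+\phi_2}\leq U^+_{\phi_1}+U^+_{\phi_2}$ in $\Om$ with equality on $\partial\Om$, and taking inward normal derivatives yields $M^+(\phi_1+\phi_2)(x)\leq M^+(\phi_1)(x)+M^+(\phi_2)(x)$; concavity of $\M^-$ produces the reverse inequality for $M^-$. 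Applying the previous paragraph's two bounds to each $M^\pm(\phi_i)$ closes the argument. Because $\Om$ is bounded, taking $\omega(r):= Cr^{-(1+\alpha)}$ for $r\in(0,\diam\partial\Om]$ and $\omega(r):=0$ for $r>\diam\partial\Om$ trivially satisfies $\omega(r)\to 0$ as $r\to\infty$. The main technical point is precisely this extraction of an additive-like decomposition for $M^\pm$ in the absence of linearity; once it is in hand, every remaining step reduces to standard Schauder-type boundary regularity for the underlying elliptic equation.
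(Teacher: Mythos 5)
Your argument for the Lipschitz bound is essentially the paper's: in the linear cases reduce to $\I(u-v)$ and apply the global $C^{1,\al'}$ boundary estimates (\cite[Thms.\ 8.33, 9.31]{GiTr-98}), and in the nonlinear case pass through the Pucci extremal structure and \cite[Theorem 1.1]{SilvestreSirakov-2013boundary}; the only cosmetic difference is that you apply the boundary estimate directly to $W=U_u-U_v$, which lies in the Pucci class, whereas the paper applies it to the solutions $U^{\pm}_{u-v}$ of the extremal equations and then sandwiches via (\ref{eqWP:ExtremalInequalityBoundaryOps}). Where you genuinely diverge is the treatment of (\ref{eqWP:ExtraAssumptionNonlinearCourrege}): the paper disposes of it with a boundedness shortcut --- since only the behavior of $\om(r)$ as $r\to\infty$ is constrained and $\Om$ is bounded, one sets $\om(r)\equiv 0$ for $r>\diam(\Om)$ and the global Lipschitz bound already gives the inequality there --- while you prove an honest localization valid at every scale, via the cutoff splitting $u-v=\chi(u-v)+(1-\chi)(u-v)$, a rescaled boundary estimate for the far-field piece, and (in the nonlinear case) the sub/super-additivity $M^{+}(\phi_1+\phi_2)\le M^{+}(\phi_1)+M^{+}(\phi_2)$ and its reverse for $M^{-}$, which follow correctly from convexity/concavity of $\M^{\pm}$ plus comparison. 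Your route buys a quantitative, scale-by-scale statement (and makes precise the ``standard boundary estimate'' the paper merely alludes to), at the cost of more work than the lemma strictly requires. Two small points to tidy up: the Leibniz expansion of $\norm{\chi(u-v)}_{C^{1,\al}}$ also produces the cross term $\norm{\grad\chi}_{L^\infty}[u-v]_{C^{\al}(B_{2r})}\lesssim r^{-1}\norm{u-v}_{C^{1,\al}(\overline{B_{2r}})}$, so the constant multiplying the first slot is not uniform for small $r$ as written (harmless here, since only large $r$ matters and $\Om$ is bounded, but worth flagging); and the half-ball of scale $r$ centered at $x\in B_r$ can exit the region where $(1-\chi)(u-v)$ vanishes when $x$ is near $\partial B_r$, so you should either estimate $\partial_\nu U_{\phi_2}$ only on $B_{r/2}$ or arrange the cutoff to equal $1$ on a slightly larger ball.
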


\begin{proof}
	First, we remark on the special assumption (1.3) in \cite[Theorem 1.6]{GuSc-2016MinMaxNonlocalarXiv}, which we listed here as (\ref{eqWP:ExtraAssumptionNonlinearCourrege}).  In this context, we simply require that the normal derivative of the solution, $U_u$, in $B_r$ is controlled by $\norm{u}_{C^{1,\al}(B_{2r})}$, which is a standard type of estimate for boundary regularity.  We recall that we are assuming for simplicity that $\Om$ is bounded.  Hence, (\ref{eqWP:ExtraAssumptionNonlinearCourrege}) is trivial once the Lipschitz character of $\I$ is established, as we can just take $\om(r)\equiv0$ once $r>\diam(\Om)$.

	The Lipschitz nature of $\I$ follows from the global (up to the boundary) $C^{1,\al'}$ regularity theory for (\ref{eqIn:BulkExtensionGeneric}). Let $u,v\in C^{1,\al}(\partial\Om)$.
	In the two linear cases, (\ref{eqIn:BulkFDiv}) and (\ref{eqIn:BulkFNondiv}), we note that (with apologies for the triviality)
	\begin{align*}
		\I(u)-\I(v)=\I(u-v),
	\end{align*}
	and in the nonlinear case (\ref{eqIn:BulkFNonlinear}) that we will invoke the extremal inequalities (\ref{eqWP:ExtremalInequalityBoundaryOps}), which means we will be utilizing boundary regularity theory for
	\begin{align}\label{eqWP:PucciEqsForLipLemma}
		\M^-(U_{u-v},x)=0,\ \ \text{and}\ \ \M^+(U_{u-v},x)=0\ \ \text{in}\ \Om.
	\end{align}
	
	For the divergence case, (\ref{eqIn:BulkFDiv}), one reference is \cite[Theorem 8.33]{GiTr-98}, and for the non-divergence case, (\ref{eqIn:BulkFNondiv}), the regularity is a straightforward consequence for the boundary oscillation reduction of the quantity $U(x)/d(x,\partial\Om)$ that can be found in \cite[Theorem 9.31]{GiTr-98}.  For the nonlinear case (\ref{eqIn:BulkFNonlinear}) one reference is \cite[Theorem 1.1]{SilvestreSirakov-2013boundary}, applied to each of the equations in (\ref{eqWP:PucciEqsForLipLemma}).  All of these results imply that for a universal $C$
		\begin{align}\label{eqDtoN:C1GammaEstimate}
			\norm{U_\phi}_{C^{1,\al'}(\overline{\Om})}\leq C\left( \norm{U_\phi}_{L^\infty(\overline{\Om})} + \norm{\phi}_{C^{1,\al}(\partial\Om)}    \right),
		\end{align}
		and when combined with the maximum principle, $\abs{U_\phi}\leq \norm{\phi}_{L^\infty(\partial\Om)}$, we see that
		\begin{align*}
			\norm{\I(\phi,\cdot)}_{C^{\al'}(\partial\Om)}\leq C\norm{\phi}_{C^{1,\al}(\partial\Om)}.
		\end{align*}
		Hence, applying this in each of our cases to $\phi=u-v$, we obtain the Lipschitz bound.
\end{proof}



\section{Linear equations with H\"older coefficients-- Proofs of Theorems \ref{thm:MainLinear}, \ref{thm:HolderDrift}, \ref{thm:HolderXDependenceTVNorm}}\label{sec:LinearEq}


In this section, we include the proofs of Theorems \ref{thm:MainLinear}, \ref{thm:HolderDrift}, \ref{thm:HolderXDependenceTVNorm}.  We note that the existence of $b$ and $\mu$, the validity of (\ref{eqIn:Thm1DtoNIntDiffLinear}), boundedness of $b$ are all a direct result of \cite[Theorem 1.6 and Proposition 1.7]{GuSc-2016MinMaxNonlocalarXiv}.


\subsection{Density and bounds for $\mu$ (Proof of Theorem \ref{thm:MainLinear})}

\begin{proof}[Proof of Theorem \ref{thm:MainLinear}]
	
 Fix $x\in\partial\Om$, we show that $\mu(x,\cdot)$ is absolutely continuous with respect to surface measure, $\sig$, on $\partial \Om$ on $\partial\Om\setminus \{x\}$. This will be done by showing absolute continuity on the set $\partial \Om\setminus \{\overline{B_r(x)}\}$ for any arbitrary $r>0$, then we can exhaust $\partial\Om\setminus \{x\}$ by a union of such sets. Thus fix $r>0$ and any set $E\subset \partial \Om\setminus \{\overline{B_r(x)}\}$ with $\sig(E)=0$. 
 
 Fix $\delta>0$, then we find a countable cover $\{B(x_j, r_j)\}_{j=1}^\infty$ of $E$ by open geodesic balls such that $\sum_{j=1}^\infty r_j^n<\delta$; let us write $B_j:=B(x_j, r_j)$ for brevity. Now let $\phi\in C^2(\partial \Om)$ be any function such that 
 \begin{align*}
0\leq \phi \leq \Indicator_{\bigcup_{j=1}^\infty B_j}.
\end{align*}
 If $\delta$ is sufficiently small compared to $r$, we will have $\phi\equiv 0$ on $\overline{B_{r/2}(x)}$ thus $\grad \phi(x)=0$, so in (\ref{eqIn:Thm1DtoNIntDiffLinear}) we have 
\begin{align*}
 \I(\phi,x) = \int_{\partial\Om\setminus\overline{B_{r/2}(x)}} \phi(y)\mu(x, dy).
\end{align*}

Let $\{\om_x\}_{x\in \Om}$ be the $F$-harmonic measure for $F$ given by \eqref{eqIn:BulkFDiv} (see Definition \ref{def:HarmonicMeasure}), then recall
\begin{align*}
 U_\phi(x)=\int_{\partial \Om}\phi(y)\omega_x(dy)
\end{align*}
for any $x\in\Om$. Now if $s>0$ is sufficiently small, for each $j$ by Proposition \ref{propBG:HarmonicMeasureGreenFunction} we have

\begin{align*}
 \omega_{x+s \nu(x)}(\partial\Om\cap B_j)\leq 
 \begin{cases}
 Cr_j^{n-1}G(x+s \nu(x),x_j+r_j \nu(x_j) )\ &\text{for}\ (\ref{eqIn:BulkFDiv})\\
 \frac{C_2}{r_j^2}\int_{B^{n+1}_{r_j}(x)\intersect\Om} G(x+s \nu(x),z)dz\ &\text{for}\ (\ref{eqIn:BulkFNondiv}).
 \end{cases}
\end{align*}
where $G$ is the Green's function and $C$ depends only on $\partial \Om$ and the ellipticity of the equation. Thus we have the estimate
\begin{align}
	U_{\phi}(x+s \nu(x)) &= \int_{\partial\Om} \phi(y)\omega_{x+s \nu(x)}(dy)\nonumber\\
	&\leq \sum_{j=1}^\infty \omega_{x+s \nu(x)}(\partial\Om\cap B_j)\nonumber\\
	&\leq C\sum_{j=1}^\infty 
    \begin{cases}
    r_j^{n-1}G(x+s \nu(x),x_j+r_j \nu(x_j) )\ &\text{for}\ (\ref{eqIn:BulkFDiv})\\
    \frac{1}{r_j^2}\int_{B^{n+1}_{r_j}(x)\intersect\Om} G(x+s \nu(x),z)dz\ &\text{for}\ (\ref{eqIn:BulkFNondiv}).
    \end{cases}\nonumber\\
	&\leq C\sum_{j=1}^\infty 
	\begin{cases}
		r_j^{n-1}\frac{sr_j}{\lvert x+s \nu(x)-(x_j+r_j \nu(x_j))\rvert^{n+1}} &\text{for}\ (\ref{eqIn:BulkFDiv})\vspace{0.1in}\\
		\frac{1}{r_j^2}\cdot\frac{sr_j^{n+2}}{\lvert x+s \nu(x)-(x_j+r_j \nu(x_j))\rvert^{n+1}} &\text{for}\ (\ref{eqIn:BulkFNondiv})
	\end{cases}\label{eqLin:DensityGreenEstimate}\\
	&\leq C_r s\sum_{j=1}^\infty r_j^n<C_rs\delta\nonumber
	\end{align}
where we have used Proposition \ref{propBG:GreenFunctionVariableCoef} to obtain the second to final inequality and $C_r$ is some constant depending on $n$, $r$, ellipticity, and $\partial \Om$ (but independent of $\delta$ and $\phi$).
Thus 
\begin{align*}
\int_{\partial\Om\setminus\overline{B_{r/2}(x)}} \phi(y)\mu(x, dy)&=\partial_\nu U_{\phi}(x)=\lim_{s\to 0}\frac{U_{\phi}(x+s \nu(x))-\phi(x)}{s}\\
 &\leq C_r\delta.
\end{align*}
Since $\{B_j\}$ covers $E$, we can take a sequence of $C^2(\partial \Om)$ functions $\Indicator_{E}\leq \phi_k\leq \Indicator_{\bigcup_{j=1}^\infty B_j}$ decreasing pointwise to $\Indicator_E$ to obtain $\mu(x, E)\leq C_r\delta$, and since $\delta$ was arbitrary this yields $\mu(x, E)=0$.

By the above, we can write $\mu(x, dy)=K(x, y)\sigma(dy)$ for some density $K$ when restricted to $\partial\Om\setminus\{x\}$. Fix $y\neq x$ in $\partial \Om$ and $0<2r<\lvert x-y\rvert$, and this time let $\phi^r_l$ and $\phi^r_u\in C^2(\partial\Om)$ be such that
	\begin{align*}
		0\leq \phi^r_l\leq \Indicator_{B_{r}(y)}\leq \phi^r_u
	\end{align*}
	with
	\begin{align*}
		\begin{cases}
		\phi^r_l&\equiv 1\ \text{in}\ B_{r/2}(y)\\
		\phi^r_l&\equiv 0\ \text{in}\ \partial\Om\setminus B_{2r}(x)\\
		\phi^r_u&\equiv 1\ \text{in}\ B_{3r/2}(x)\\
		\phi^r_u&\equiv 0\ \text{in}\ \partial\Om\setminus B_{2r}(x).
		\end{cases}
	\end{align*}
Following similar calculations as before, and invoking the same split argument for the divergence/non-divergence setting in (\ref{eqLin:DensityGreenEstimate}),  we have
\begin{align*}
	U_{\phi^r_l}(x+s \nu(x)) &= \int_{\partial\Om} \phi^r_l(z)\omega_{x+s \nu(x)}(dz)\\
	&\leq \omega_{x+s \nu(x)}(\partial\Om\cap B_r(y))\\
	&\leq \frac{C_2sr^n}{\lvert x+s \nu(x)-(y+r \nu(y))\rvert^{n+1}}.
	\end{align*}
Since again $\phi^r_l\equiv 0$ near $x$, we have $\I(\phi^r_l,x) = \int_{\partial\Om\cap B_{2r}(x)} \phi^r_l(y)\mu(x, dy)$, hence taking the limit in the difference quotient we have
\begin{align*}
 \frac{\mu(x, \partial\Om\cap B_{2r}(x))}{\sigma(\partial\Om\cap B_{2r}(x))}&\leq\frac{1}{\sigma(\partial\Om\cap B_{2r}(x))}\int_{\partial\Om\cap B_{2r}(x)} \phi^r_l(y)\mu(x, dy)\\
 &=\frac{\partial_\nu U_{\phi^r_l}(x)}{\sigma(\partial\Om\cap B_{2r}(x))}\\
 &\leq \frac{C_2}{\lvert x-(y+r \nu(y))\rvert^{n+1}}.
\end{align*}
a similar calculation utilizing $\phi^r_u$ yields
\begin{align*}
 \frac{\mu(x, \partial\Om\cap B_{2r}(x))}{\sigma(\partial\Om\cap B_{2r}(x))}\geq \frac{C_1}{\lvert x-(y+r \nu(y))\rvert^{n+1}}.
\end{align*}
By the Lebesgue differentiation theorem, for $\sigma$-a.e. $y\in\partial\Om$ we have
\begin{align*}
\frac{C_1}{\lvert x-y\rvert^{n+1}}\leq K(x, y)=\lim_{r\to 0}  \frac{\mu(x, \partial\Om\cap B_{2r}(x))}{\sigma(\partial\Om\cap B_{2r}(x))}\leq \frac{C_2}{\lvert x-y\rvert^{n+1}}.
\end{align*}

\end{proof}


\subsection{H\"older continuity of the coefficients of $\I$ (Proof of Theorem \ref{thm:HolderDrift})}

Before embarking on the proof of Theorem \ref{thm:MainLinear}, we make some background observations. Recall $2r_0>0$ will always be a constant smaller than the injectivity radius of $\partial \Om$. For this portion we assume $\partial \Om$ to be a $C^5$ surface, this means the tangent bundle $T(\partial \Om)$ is a $C^{4}$ manifold. Then the restriction of the Euclidean metric from $\real^{n+1}$ to $\partial \Om$ is also $C^{4}$, and the exponential mapping $\exp_x$ based at any point $x\in \partial \Om$ is $C^{3}$ (the same holds for its inverse in its domain of definition). In particular the geodesic distance squared will be $C^{4}$ on $B_{r_0}(x_0)\times B_{r_0}(x_0)$, meaning that the second derivative involving the mapping $D(\exp^{-1}_p)\vert_h$ leading to the estimate \eqref{eqn:holdergradientbound} below is justified. Finally, recall Definition \ref{def:HolderVectorField} for the H\"older continuity of a vector field on $\partial \Om$.

\begin{proof}[Proof of Theorem \ref{thm:HolderDrift}]

Fix $x_0$, $y_0\in \partial \Om$ which will be taken so $d(x_0, y_0)$ is smaller than some universal constant, that is yet to be determined. For ease of notation let us write
\begin{align*}
 d_0:=d(x_0, y_0),
\end{align*}
and we tacitly assume $d_0\leq \min\{1,r_0\}$. Also fix a unit length $v\in T_{x_0}(\partial \Om)$, and let $\phi$ be a $C^{2}$ function on $\partial \Om$ such that for $h\in B_{2r_0}(x_0)$ we have
\begin{align*}
 \phi(h)=(v, \exp^{-1}_{x_0}(h))_g.
\end{align*}

Computing using normal coordinates centered at $x_0$ we easily see $\grad\phi(x_0)=v$, and in particular $\phi(h)=(\grad \phi(x_0), \exp^{-1}_{x_0}(h))_g$ on $B_{r_0}(x_0)$. 
Also let $\eta\in C^\infty(\real)$ be such that $0\leq \eta\leq 1$, $\eta\equiv 1$ on $[0, r_0]$, and $\eta\equiv 0$ on $[r_0+d_0^{\alpha_1}, \infty)$ for some $\alpha_1\in (0, 1)$ which will be determined later; we will also assume that $r_0+d_0^{\alpha_1}$ is less than the injectivity radius of $\partial \Om$, and so $d_0^{\al_1}\leq r_0$ will suffice. 
We also define $\eta_{x_0}$, $\eta_{y_0}$ by $\eta(d(x_0, \cdot))$ and $\eta(d(y_0, \cdot))$ respectively, both of which can be seen to be $C^{3}$.
Then

\begin{align*}
&\I(\eta_{x_0}\phi,x_0) 
\\&= (b(x_0),\grad (\eta_{x_0}\phi)(x_0))_g+\int_{\partial\Om}\left( (\eta_{x_0}\phi)(h)-(\eta_{x_0}\phi)(x_0)-\Indicator_{B_{r_0}(x_0)}(h)(\grad (\eta_{x_0}\phi)(x_0), \exp^{-1}_{x_0}(h))_g \right)\mu(x_0,dh)\\
&=(b(x_0),\grad \phi(x_0))_g+\int_{\partial\Om}\left( \eta_{x_0}(h)\phi(h)-\Indicator_{B_{r_0}(x_0)}(h)(\grad \phi(x_0), \exp^{-1}_{x_0}(h))_g \right)\mu(x_0,dh)\\
&=(b(x_0), v)_g+\int_{B_{r_0+d_0^{\alpha_1}}(x_0)\setminus B_{r_0}(x_0)}\left(\eta_{x_0}(h)-1\right)(v, \exp^{-1}_{x_0}(h))_g\mu(x_0,dh).
\end{align*}
Now for points $x$, $y\in \partial \Om$ such that $y\in B_{r_0}(x)$ let $P_{x\to y}$ denote parallel transport of a tangent vector from $x$ to $y$ along the minimal geodesic connecting $x$ to $y$. In a manner similar to the construction of $\phi$, we take $\psi$ to be a $C^{2}$ function on $\partial \Om$ such that 
\begin{align*}
\psi(h)&=(P_{x_0\to y_0}v, \exp^{-1}_{y_0}(h))
\end{align*}
for $h\in B_{r_0}(y_0)$.

Then a similar calculation as above yields 
\begin{align*}
&\I(\eta_{y_0}\psi,y_0) = (b(y_0), P_{x_0\to y_0}v)_g+\int_{B_{r_0+d_0^{\alpha_1}}(y_0)\setminus B_{r_0}(y_0)}\eta_{y_0}(h)(P_{x_0\to y_0}v, \exp^{-1}_{y_0}(h))_g\mu(y_0,dh).
\end{align*}
Thus, using the fact that parallel transport preserves inner product,
\begin{align*}
 \abs{(b(x_0)-P_{y_0\to x_0} b(y_0), v)_g}&=\abs{(b(x_0), v)_g-(b(y_0), P_{x_0\to y_0}v)_g}.
 \end{align*}
Thus, using the triangle inequality, we can continue the previous as:
 \begin{align}
	\abs{(b(x_0)-P_{y_0\to x_0} b(y_0), v)_g} 
	&\leq\abs{\I(\eta_{x_0}\phi,x_0) -\I(\eta_{y_0}\psi,y_0)}\\
 &+\lvert\int_{B_{r_0+d_0^{\alpha_1}}(x_0)\setminus B_{r_0}(x_0)}\left(\eta_{x_0}(h)-1\right)(v, \exp^{-1}_{x_0}(h))_g\mu(x_0,dh)\rvert\notag\\
 &+\lvert\int_{B_{r_0+d_0^{\alpha_1}}(y_0)\setminus B_{r_0}(y_0)}\left(\eta_{y_0}(h)-1\right)(P_{x_0\to y_0}v, \exp^{-1}_{y_0}(h))_g\mu(y_0,dh)\rvert\notag\\
 &=: I+II+III.\label{eqn:forholderboundthis}
 \end{align}
 Now for the terms $II$ and $III$, we calculate using Theorem \ref{thm:MainLinear} part (ii),
\begin{align}
 II&\leq 2\int_{B_{r_0+d_0^{\alpha_1}}(x_0)\setminus B_{r_0}(x_0)}\abs{\exp^{-1}_{x_0}(h)}_g\mu(x_0,dh)\notag\\
 &\leq 2\Lambda\diam_g(\partial \Om)\int_{B_{r_0+d_0^{\alpha_1}}(x_0)\setminus B_{r_0}(x_0)}d(x_0, h)^{-n-1}\sigma(dh)\notag\\
 &\leq 2\Lambda\diam_g(\partial \Om) r_0^{-n-1}\sigma(B_{r_0+d_0^{\alpha_1}}(x_0)\setminus B_{r_0}(x_0))\leq Cd_0^{\alpha_1}\label{eqn:boundonII}
\end{align}
 for some universal $C>0$. We obtain the estimate for $III$ in the same.
 
 The remainder of the proof is to estimate the term $I$. Since $\eta_{x_0}\phi$ and $\eta_{y_0}\psi$ are $C^{2}$ functions on $\partial \Om$, we can use the results mentioned in the discussion preceding and following \eqref{eqDtoN:C1GammaEstimate}.  That is, there is some $\beta\in (0, 1)$, $0<\beta'<\beta$, and a universal $C>0$, so that
\begin{align}
I &\leq \abs{\partial_\nu U_{\eta_{x_0}\phi}(x_0)-\partial_\nu U_{\eta_{y_0}\psi}(x_0)}+\abs{\partial_\nu U_{\eta_{y_0}\psi}(x_0)-\partial_\nu U_{\eta_{y_0}\psi}(y_0)}\notag\\
 &\leq \abs{\I(\eta_{x_0}\phi, x_0)-\I(\eta_{y_0}\psi, x_0)}+C\abs{x_0-y_0}^{\beta'}[\nabla U_{\eta_{y_0}\psi}]_{C^{\beta'}(\Om)}\notag\\
 &\leq 
 \abs{\I(\eta_{x_0}\phi, x_0)-\I(\eta_{y_0}\psi, x_0)}+Cd_0^{\beta'}\norm{\eta_{y_0}\psi}_{C^{1,\beta}(\partial\Om)}.\label{eqn:boundonI}
\end{align}
It is easy to see that $\norm{\eta_{y_0}\psi}_{C^{1,\beta}(\partial\Om)}$ is bounded by a universal constant times $d_0^{-2\alpha_1}$, hence 
\begin{align}\label{eqn:boundonI.2}
 d_0^{\beta'}\norm{\eta_{y_0}\psi}_{C^{1,\beta}(\partial\Om)}\leq Cd_0^{\beta'-2\alpha_1}.
\end{align}
To deal with the first term, take $\rho>0$ much smaller than $r_0$ also to be determined later, and let $\tilde{\eta}\in C^\infty(\real)$ with $0\leq \tilde\eta\leq 1$, $\tilde{\eta}\equiv 0$ on $[-\rho,  \rho]$ and $\tilde{\eta}\equiv 1$ on $[2\rho, \infty)$, and define $\tilde{\phi}:=\tilde{\eta}(d(x_0, \cdot))\in C^{2, \beta}(\partial \Om)$. It is easy to see that both $\norm{\tilde\phi}_{C^2(\partial \Om)}$ and $\norm{1-\tilde\phi}_{C^2(\partial \Om)}$ are bounded by a universal constant times $\rho^{-2}$. We then apply \cite[Lemma 4.15 (4.7)]{GuSc-2016MinMaxNonlocalarXiv} and use Lemma \ref{lemWP:DtoNLipschitz} to see that (after possibly making a smaller choice for $\beta$),
\begin{align}
 \abs{\I(\eta_{x_0}\phi, x_0)-\I(\eta_{y_0}\psi, x_0)}&\leq C(\norm{(1-\tilde{\phi})(\eta_{x_0}\phi-\eta_{y_0}\psi)}_{C^{1, \beta}(\partial \Om)}+\norm{\tilde\phi}_{C^{1, \beta}(\partial \Om)}\norm{\eta_{x_0}\phi-\eta_{y_0}\psi}_{L^\infty(\spt (\tilde{\phi}))})\notag\\
 &\leq C(\norm{(1-\tilde{\phi})(\phi-\psi)}_{C^{1, \beta}(\partial \Om)}+\rho^{-1-\beta}\norm{\eta_{x_0}\phi-\eta_{y_0}\psi}_{L^\infty(\partial \Om\setminus B_{2\rho}(x_0))}).\label{eqn:DtoNboundwithcutoff}
\end{align}
Now note for any $h\in\partial \Om$,
\begin{align}
 \abs{\eta_{x_0}(h)\phi(h)-\eta_{y_0}(h)\psi(h)}
 &\leq  \abs{\eta_{x_0}(h)}\abs{\phi(h)-\psi(h)}+ \abs{\eta_{x_0}(h)\psi(h)-\eta_{y_0}(h)\psi(h)}\notag\\
 &\leq \abs{\eta_{x_0}(h)}\abs{\phi(h)-\psi(h)}+\diam_g(\partial \Om)\abs{\eta(d(x_0, h))-\eta(d(y_0, h))}\notag\\
 &\leq \abs{\eta_{x_0}(h)}\abs{\phi(h)-\psi(h)}+C\norm{\eta}_{C^1(\real)}\abs{d(x_0, h)-d(y_0,h)}\notag\\
 &\leq \abs{\eta_{x_0}(h)}\abs{\phi(h)-\psi(h)}+Cd_0^{1-\al_1}.\label{eqn:C^0differencebound}
\end{align}
The first term in the last line above is zero unless $d(x_0, h)\leq r_0+d_0^{\al_1}$. For such $h$ we find
\begin{align*}
 \abs{\eta_{x_0}(h)\phi(h)-\eta_{x_0}(h)\psi(h)}&\leq \abs{(v, \exp^{-1}_{x_0}(h)-P_{y_0\to x_0}\exp^{-1}_{y_0}(h))_g}\\
 &\leq \abs{\exp^{-1}_{x_0}(h)-P_{y_0\to x_0}\exp^{-1}_{y_0}(h)}_g\\
 &=\frac{1}{2}\abs{\left(\grad_x d(x, h)^2\vert_{x=x_0}-P_{y_0\to x_0}\grad_xd(x, h)^2\vert_{x=y_0}\right)}_g\\
 &\leq \frac{1}{2}\sup_{x\in \partial \Om, h\in B_{r_0}(x)}\abs{\Hess_x d(\cdot, h)^2}_g d(x_0, y_0)\leq Cd_0,
\end{align*}
where to obtain the third line above we have used \cite[Theorem 5.6.1 (5.6.4)]{Jost2011}. Thus if we take
\begin{align}\label{eqLin:TheJunProofRhoPowerOfD0} 
	\rho:=d_0^{\alpha_3/(1+\beta)}\ \text{for}\ \alpha_3\in (0, 1)\ \text{to be determined},
\end{align}
	by \eqref{eqn:C^0differencebound} we have
\begin{align}\label{eqn:firsttermbound}
 \rho^{-1-\beta}\norm{\eta_{x_0}\phi-\eta_{y_0}\psi}_{L^\infty(\partial \Om\setminus B_{2\rho(x_0)})}\leq Cd_0^{1-\al_1-\alpha_3}.
\end{align}
Next we turn to the term $\norm{(1-\tilde{\phi})(\phi-\psi)}_{C^{1, \beta}(B_{2\rho}(x_0))}$. First,
\begin{align*}
\norm{(1-\tilde{\phi})(\phi-\psi)}_{C^0(B_{2\rho}(x_0))}\leq \norm{\phi-\psi}_{C^0(B_{2\rho}(x_0))}\leq Cd_0
\end{align*}
by the same argument as above.

Next fix any $h\in B_{2\rho}(x_0)$, $w\in T_h(\partial \Om)$ and define for $t\in [0, 1]$ and $s$ near zero,
\begin{align*}
 \overline\gamma(s, t):&=\exp_{y_0}(t(\exp^{-1}_{y_0}(h)+s[D(\exp^{-1}_{y_0})\vert_{h}(w)])),\\
 J(t):&=\frac{\partial}{\partial s}\vert_{s=0}\overline\gamma(s, t),
\end{align*}
then $J$ is a Jacobi field along the geodesic from $y_0$ to $h$ with $J(0)=0$ and $\dot J=D(\exp_{y_0}^{-1})|_h(w)$ (see \cite[Sec 6.1.4]{Petersen-2016RiemannianBookGTM}). Then we calculate two different ways,
\begin{align}
 \frac{\partial}{\partial s}\vert_{s=0}\psi(\overline\gamma(s, 1))&=\frac{\partial}{\partial s}\vert_{s=0}(P_{x_0\to y_0}v,\exp^{-1}_{y_0}(h)+s[D(\exp^{-1}_{y_0})\vert_{h}(w)])_g\notag\\
 &=(P_{x_0\to y_0}v,D(\exp^{-1}_{y_0})\vert_{h}(w))_g=(v,P_{y_0\to x_0}[D(\exp^{-1}_{y_0})\vert_{h}(w)])_g,\notag\\
 \frac{\partial}{\partial s}\vert_{s=0}\psi(\overline\gamma(s, 1))&=(\grad \psi(h), J(1))_g=(\grad \psi(h), D(\exp_{y_0})\vert_{\exp^{-1}_{y_0}(h)}(\dot J(0)))_g\notag\\
 &=(\grad \psi(h), w)_g.\label{eqn:gradient formula}
\end{align}
Similarly,
\begin{align*}
 (v,D(\exp^{-1}_{x_0})\vert_{h}(w))_g &=(\grad \phi(h), w)_g.
\end{align*}
Thus for any $h_1$, $h_2\in B_{2\rho}(x_0)$ we have 
\begin{align}\label{eqn:crossdifference1}
 &(\grad\phi(h_1)-\grad\psi(h_1)-P_{h_2\to h_1}(\grad \phi(h_2)-\grad \psi(h_2)),w)_g\notag\\
 &=(v,D(\exp^{-1}_{x_0})\vert_{h_1}(w)-D(\exp^{-1}_{x_0})\vert_{h_2}(P_{h_1\to h_2}w)\notag\\
 &+(P_{y_0\to x_0}[D(\exp^{-1}_{y_0})\vert_{h_2}(P_{h_1\to h_2}w)]-P_{y_0\to x_0}[D(\exp^{-1}_{y_0})\vert_{h_1}(w)]))_g.
\end{align}
Let (all parametrized over $[0, 1]$) $\gamma$ and $h$ be the constant speed geodesics from $y_0$ to $x_0$, and $h_2$ to $h_1$ respectively, and $V$ and $W$ the parallel fields along $\gamma$ and $h$ respectively with $V(1)=v$ and $W(1)=w$. 

Then the last expression in \eqref{eqn:crossdifference1} above can be written
\begin{align*}
&\int_0^1\int_0^1\frac{\partial }{\partial q}\frac{\partial }{\partial p}(V(p), [D(\exp^{-1}_{\gamma(p)})\vert_{h(q)}]W(q))_gdqdp\\
&=\int_0^1\int_0^1\frac{\partial }{\partial q}(V(p), \nabla_{\dot{\gamma}(p)}[D(\exp^{-1}_{\gamma(p)})\vert_{h(q)}]W(q))_gdqdp.
\end{align*}
Fix any local coordinates near $\gamma(p)$ and $h(q)$, then we find (below, all expressions are evaluated at $(x, h)=(\gamma(p), h(q))$)
\begin{align*}
& \nabla_{\dot{\gamma}(p)}[D(\exp^{-1}_{\gamma(p)})\vert_{h(q)}]W(q)=-\frac{1}{2}\nabla_{\dot{\gamma}(p)}[D_h\grad_x d(x, h)^2]W(q)\\
 &=-\frac{1}{2}\nabla_{\dot{\gamma}(p)}(g^{jk}(x)\partial^2_{x_kh_i}d^2(x, h)W^i(q)\partial_{x_j})\\
 &=-\frac{1}{2}\dot{\gamma}^l(p)[\partial_{x_l}(g^{jk}(x)\partial^2_{x_kh_i}d^2(x, h))+(g^{rk}(x)\partial^2_{x_kh_i}d^2(x, h))\Gamma^j_{lr}(x)]W^i(q)\partial_{x_j}
\end{align*}
where here, $\Gamma^i_{jk}$ are the Christoffel symbols.
 In particular $\nabla_{\dot{\gamma}(p)}[D(\exp^{-1}_{\gamma(p)})\vert_{h(q)}]W(q)$ is linear in $W(q)$, hence we can continue calculating as
\begin{align*}
 &\int_0^1\int_0^1\frac{\partial }{\partial q}(V(p), \nabla_{\dot{\gamma}(p)}[D(\exp^{-1}_{\gamma(p)})\vert_{h(q)}]W(q))_gdqdp\\
 &=\int_0^1\int_0^1\abs{\dot{\gamma}(p)}\frac{\partial }{\partial q}((\nabla_{\frac{\dot{\gamma}(p)}{\abs{\dot{\gamma}(p)}}}[D(\exp^{-1}_{\gamma(p)})\vert_{h(q)}])^tV(p), W(q))_gdqdp\\
 &=d(x_0, y_0)\int_0^1\int_0^1\abs{\dot{h}(q)}(\nabla_{\frac{\dot{h}(q)}{\abs{\dot{h}(q)}}}(\nabla_{\frac{\dot{\gamma}(p)}{\abs{\dot{\gamma}(p)}}}[D(\exp^{-1}_{\gamma(p)})\vert_{h(q)}])^tV(p), W(q))_gdqdp\\
 &\leq \sup_{p, q}\norm{(\nabla_{\frac{\dot{h}(q)}{\abs{\dot{h}(q)}}}(\nabla_{\frac{\dot{\gamma}(p)}{\abs{\dot{\gamma}(p)}}}[D(\exp^{-1}_{\gamma(p)})\vert_{h(q)}])^t} d(h_1, h_2)d_0\leq Cd(h_1, h_2)d_0
\end{align*}
for some constant $C>0$ depending only on $\partial \Om$ and $\norm{\cdot}$ is the operator norm above (again calculating in local coordinates shows $(\nabla_{\frac{\dot{\gamma}(p)}{\abs{\dot{\gamma}(p)}}}[D(\exp^{-1}_{\gamma(p)})\vert_{h(q)}])^t$ is a linear operator). Thus recalling \eqref{eqn:crossdifference1} we have
\begin{align}\label{eqn:holdergradientbound}
 \left[\grad(\phi-\psi)\right]_{C^{0, 1}(B_{2\rho}(x_0))}\leq Cd_0.
\end{align}

\noindent
Then for any $h\in B_{2\rho}(x_0)$ (recall $\rho$ from (\ref{eqLin:TheJunProofRhoPowerOfD0}))
\begin{align}
\abs{\grad(\phi-\psi)(h)}_g&\leq \abs{\grad(\phi-\psi)(x_0)}_g+Cd(x_0, h)d_0\notag\\
 &\leq \abs{v-\grad\psi(x_0)}_g+C\rho d_0\notag\\
 &=\abs{v-\grad\psi(x_0)}_g+Cd_0^{1+\alpha_3/(1+\beta)}.\label{eqn:C^1bound1}
\end{align}

\noindent
By \eqref{eqn:gradient formula} again we calculate for an arbitrary unit length $w\in T_{x_0}(\partial \Om)$,
\begin{align*}
\abs{ (v-\grad \psi(x_0), w)_g}&=\abs{(v, w-P_{y_0\to x_0}[D(\exp^{-1}_{y_0})\vert_{x_0}(w)])_g}\\
&\leq \abs{v}_g\abs{w-P_{y_0\to x_0}[D(\exp^{-1}_{y_0})\vert_{x_0}(w)]}_g\\
&\leq Cd_0,
\end{align*}
for some universal $C>0$. In particular, this gives $\abs{v-\grad\psi(x_0)}_g\leq Cd_0$, 
which combining with \eqref{eqn:C^1bound1} yields
\begin{align}
\norm{\grad(\phi-\psi)}_{C^0(B_{2\rho}(x_0))}\leq Cd_0.\label{eqn:C^1bound2}
\end{align}

Thus combining the above with \eqref{eqn:C^1bound1} we have 
\begin{align*}
\abs{\grad[(1-\tilde{\phi})(\phi-\psi)](h)}_g&\leq \abs{\grad(1-\tilde{\phi})}_g\abs{\phi(h)-\psi(h)}_g+\abs{\grad(\phi-\psi)(h)}_g\notag\\
&\leq C(\frac{d_0}{\rho}+d_0+d_0^{1+\frac{\alpha_3}{(1+\beta)}})\leq Cd_0^{1-\frac{\alpha_3}{(1+\beta)}}.
\end{align*}
Finally,

\begin{align*}
 \left[\grad((1-\tilde{\phi})(\phi-\psi))\right]_{C^{\beta}}
 &\leq  \norm{1-\tilde{\phi}}_{L^\infty}\left[\grad(\phi-\psi)\right]_{C^\beta}+\norm{\phi-\psi}_{L^\infty}\left[\grad(1-\tilde{\phi})\right]_{C^\beta}\\
 &\qquad+\norm{\grad(1-\tilde{\phi})}_{L^\infty}\left[\phi-\psi\right]_{C^\beta}+\norm{\grad(\phi-\psi)}_{L^\infty}\left[1-\tilde{\phi}\right]_{C^\beta}\\
 &\leq C(\frac{\rho d_0}{\rho^\beta}+\frac{d_0}{\rho^{1+\beta}}+\frac{d_0}{\rho^\beta})
 \leq C(\frac{d_0}{\rho^{1+\beta}})= Cd_0^{1-\alpha_3}
\end{align*}
where here all of the norms are taken over $B_{2\rho}(x_0)$. Thus we have shown that
\begin{align*}
 \norm{(1-\tilde{\phi})(\phi-\psi)}_{C^{1, \beta}(B_{2\rho}(x_0))}\leq Cd_0^{1-\alpha_3}.
\end{align*}
Now choose $\alpha_1$, $\beta$, and $\alpha_3\in (0, 1]$ so that $\alpha:=\min\{\alpha_1, \beta-2\alpha_1, 1-\alpha_3\}>0$, combining the final estimate above with \eqref{eqn:boundonI}, \eqref{eqn:boundonI.2}, \eqref{eqn:DtoNboundwithcutoff}, \eqref{eqn:firsttermbound} yields
\begin{align*}
 I\leq Cd_0^\alpha.
\end{align*}

Finally recalling \eqref{eqn:forholderboundthis}, \eqref{eqn:boundonII}, we will have for some universal $C>0$ and $\alpha\in (0, 1)$ the estimate
\begin{align*}
 \abs{(b(x_0)-P_{y_0\to x_0} b(y_0), v)_g}&\leq Cd(x_0, y_0)^\alpha
\end{align*}
which in turn proves that $b$ is locally H\"older continuous.
\end{proof}


\subsection{The proof of Theorem \ref{thm:HolderXDependenceTVNorm}}
Here we provide the proof of the control of the H\"older continuity of the L\'evy measure with respect to the TV norm.

\begin{proof}[Proof of Theorem \ref{thm:HolderXDependenceTVNorm}]
Fix $\delta>0$, some $x_0\in \partial \Om$, and $r=\frac{\delta}{4}$. We assume that $2\delta<\min\{1,{\rm inj}(\partial\Om)\}$ where ${\rm inj}(\partial\Om)$ is the injectivity radius of $\partial \Om$. First we claim there exists $\alpha\in (0, 1)$ and $C>0$ such that if $\phi \equiv 0$ in $B_{2r}(x_0) \cap \partial \Omega$, then
\begin{align}\label{eq:localization of Holder regularity}
  \norm{\mathcal{I}(\phi,\cdot)}_{C^\alpha(B_{r}(x_0))} \leq \frac{C}{r}\norm{\phi}_{L^\infty(\partial \Omega)}.
\end{align}
Indeed, the claim immediately follows by the comparison principle combined with \cite[Corollary 8.36]{GiTr-98} in the divergence form case \eqref{eqIn:BulkFDiv}, and in the non-divergence form case \eqref{eqIn:BulkFNondiv}, it follows from \cite[Theorem 9.31 and eq (9.71)]{GiTr-98}

Now by \eqref{eqn:TVcharacterization} and density of $C^{1, \alpha}(\partial \Om)$ in $L^\infty(\partial \Om)$, it is sufficient to prove that for any $\phi\in C^{1, \alpha}(\partial \Om)$ with $\norm{\phi}_{L^\infty(\partial \Om)}\leq 1$,
   \begin{align}\label{eqn:TVprovethis}
    \abs{\int_{\partial \Omega} \phi(y) \chi_{\partial \Omega \setminus B_{\delta}(x_1)}(y)\mu(x_1,dy)-\int_{\partial \Omega} \phi(y) \chi_{\partial \Omega \setminus B_{\delta}(x_2) }(y) \mu(x_2,dy)} \leq Cd(x_1,x_2)^\alpha  
  \end{align}
  for some $C>0$ independent of $\alpha$, whenever $x_1$, $x_2\in B_r(x_0)$.
  
   Let $\eta_{k, x_1} \in C^2(\partial \Om)$ be such that $0\leq \eta_{k,x_1}\leq 1$ on $\partial \Om$, with $\eta_{k,x_1}\equiv 0$ on $B_{\delta}(x_1)$ and $\eta_{k,x_1}\equiv 1$ on $\partial \Om\setminus B_{\delta+1/k}(x_1)$, and an analogous choice for $\eta_{k, x_2}$. Then we find
\begin{align}
 \abs{\I(\eta_{k, x_1} \phi, x_1)-\I(\eta_{k, x_2} \phi, x_2)}&\leq \abs{\I(\eta_{k, x_1} \phi, x_1)-\I(\eta_{k, x_1} \phi, x_2)}+\abs{\I(\phi(\eta_{k, x_1} -\eta_{k, x_2}), x_2)}\notag\\
 &\leq \frac{C}{r}\norm{\phi}_{L^\infty(\partial \Omega)}d(x_1, x_2)^\alpha+\abs{\I(\phi(\eta_{k, x_1} -\eta_{k, x_2}), x_2)}\notag\\
 &\leq \frac{C}{r}d(x_1, x_2)^\alpha+\abs{\I(\phi(\eta_{k, x_1} -\eta_{k, x_2}), x_2)}\label{eqn:TVcrossterm}
\end{align}
where to obtain the second line we have used \eqref{eq:localization of Holder regularity} and that $x_1$, $x_2\in B_{r}(x_0)$, along with the choice of $r$; note that by the triangle inequality we have $\eta_{k, x_1}\equiv 0$ on $B_{2r}(x_0)$.
   
To estimate the second term in \eqref{eqn:TVcrossterm}, first we note by definition, $\eta_{k, x_1}-\eta_{k, x_2}=0-0= 0$ in $B_{\delta-d(x_1, x_2)}(x_2)$. Likewise, we have $\eta_{k, x_1}-\eta_{k, x_2}=1-1=0$ outside of $B_{\delta+d(x_1, x_2)+1/k}(x_2)$. Then by Theorem \ref{thm:MainLinear} (ii), we obtain
\begin{align*}
 \abs{\I(\phi(\eta_{k, x_1} -\eta_{k, x_2}), x_2)}&=\abs{\int_{B_{\delta+d(x_1, x_2)+1/k}(x_2)\setminus B_{\delta-d(x_1, x_2)}(x_2))}\phi(h)(\eta_{k, x_1}(h)-\eta_{k, x_2}(h))\mu(x_2, dh)}\\
 &\leq 2\Lambda\int_{B_{\delta+d(x_1, x_2)+1/k}(x_2)\setminus B_{\delta-d(x_1, x_2)}(x_2))}d(x_2, h)^{-n-1}\sigma(dh).
\end{align*}
Now we can consider normal coordinates centered at $x_2$, then writing $s$ for the radial coordinate and $\omega$ for coordinates on the unit sphere $\mathbb{S}^{n-1}$ we can write $\sigma=\lambda(s, \omega)ds\wedge \vol_{\mathbb{S}^{n-1}}$ for some real valued function $\lambda$ where $\vol_{\mathbb{S}^{n-1}}$ is the canonical volume form on $\mathbb{S}^{n-1}$. Since $\partial \Om$ is compact, there is a (possibly negative) lower bound $K$ on the Ricci curvature, thus using standard volume form comparison (see \cite[Lemma 7.1.2]{Petersen-2016RiemannianBookGTM}) we can calculate that
\begin{align*}
 \lambda(s, \omega)\leq {\rm sn}_{K}^{n-1}(s)\leq s^{n-1}+\frac{n-1}{2}\left(\max_{[0, {\rm inj}(\partial \Om)]}\abs{\ddot{{\rm sn}}_K}\right)s^n=s^{n-1}+Cs^n.
\end{align*}
Here 
\begin{align*}
{\rm sn}_K(s)=
\begin{cases}
 \frac{\sin{(s\sqrt{K})}}{\sqrt{K}},& K>0,\\
 s,& K=0,\\
  \frac{\sinh{(s\sqrt{-K})}}{\sqrt{-K}},& K<0,
\end{cases}
\end{align*}
and thus  $C>0$ only depends on $K$, $n$, and the injectivity radius ${\rm inj}(\partial \Om)$ of $\partial \Om$. Then we compute
\begin{align*}
 &\int_{B_{\delta+d(x_1, x_2)+1/k}(x_2)\setminus B_{\delta-d(x_1, x_2)}(x_2))}d(x_2, h)^{-n-1}\sigma(dh)\\
 &= \int_{\delta-d(x_1, x_2)}^{\delta+d(x_1, x_2)+1/k}\left(\int_{\mathbb{S}^{n-1}}s^{-n-1}\lambda(s, \omega)\vol_{\mathbb{S}^{n-1}}(d\omega)\right)ds\\
 &\leq \int_{\mathbb{S}^{n-1}}\vol_{\mathbb{S}^{n-1}}(d\omega)\int_{\delta-d(x_1, x_2)}^{\delta+d(x_1, x_2)+1/k}(s^{-2}+Cs^{-1})ds\\
 &\leq C\int_{\delta-d(x_1, x_2)}^{\delta+d(x_1, x_2)+1/k}s^{-2}ds=C\left(\frac{1}{\delta-d(x_1, x_2)}-\frac{1}{\delta+d(x_1, x_2)+\frac{1}{k}}\right)
\end{align*}
possibly taking $\delta$ smaller. Combining this with \eqref{eqn:TVcrossterm}, then taking $k\to \infty$ and using dominated convergence yields
\begin{align*}
 &\abs{\int_{\partial \Omega} \phi(y) \chi_{\partial \Omega \setminus B_{\delta}(x_1)}(y)\mu(x_1,dy)-\int_{\partial \Omega} \phi(y) \chi_{\partial \Omega \setminus B_{\delta}(x_2) }(y) \mu(x_2,dy)}\\
 &\leq C\left(\frac{1}{\delta}d(x_1, x_2)^\alpha+\frac{1}{\delta-d(x_1, x_2)}-\frac{1}{\delta+d(x_1, x_2)}\right).
\end{align*}
Finally,
\begin{align*}
 \frac{1}{\delta-d(x_1, x_2)}-\frac{1}{\delta+d(x_1, x_2)}&=\frac{2d(x_1, x_2)}{\delta^2-d(x_1, x_2)^2}\leq \frac{8d(x_1, x_2)}{3\delta^2}
\end{align*}
since $d(x_1, x_2)\leq 2r=\delta/2$, hence we obtain \eqref{eqn:TVprovethis}, finishing the proof.
\end{proof}



\section{Fully nonlinear equations-- Proof of Theorem \ref{thm:MainNonlinear}}\label{sec:FullyNonlinear}

In this section we treat fully nonlinear equations for (\ref{eqIn:BulkExtensionGeneric}) and (\ref{eqIn:BulkFNonlinear}), and we provide the proof of Theorem \ref{thm:MainNonlinear}.  We will collect some notation from Section \ref{sec:Intro}.  Recall, $\I$ is defined in (\ref{eqIn:BulkExtensionGeneric}) and (\ref{eqIn:DefOfDtoN}) under the nonlinear $F$ in (\ref{eqIn:BulkFNonlinear}).  Furthermore, Theorem \ref{thm:MainNonlinear} will show that for $\phi\in C^{1,\al}(\partial\Om)$,

\begin{align*}
	\I(\phi,x) = \min_{i}\max_{j}\left\{ f^{ij}(x) + L^{ij}(\phi,x)\right\},
\end{align*}
where $f^{ij}\in C(\partial\Om)$ and $L^{ij}$ are the linear operators defined as
\begin{align}\label{eqFN:LinearFamilyInMinMax}
		L^{ij}(\phi,x) &= c^{ij}(x)\phi(x) + \left(b^{ij}(x),\grad\phi(x)\right)\nonumber\\
		&+ \int_{\partial\Om} \left( \phi(h)-\phi(x)-
		\Indicator_{B_{r_0}(x)}(h)(\grad \phi(x), \exp^{-1}_x(h))_g \right)
		\mu^{ij}(x,dh). 
\end{align}


\subsection{Proof of Theorem \ref{thm:MainNonlinear}, equation (\ref{eqIn:NonlinearMinMax})}

Thanks to the Lipschitz nature of $\I:C^{1,\al}(\partial\Om)\to C^\al(\partial\Om)$ that was established in Lemma \ref{lemWP:DtoNLipschitz}, the min-max formula promised in Theorem \ref{thm:MainNonlinear} is a consequence of \cite[Theorem 1.6 and Prop 1.7]{GuSc-2016MinMaxNonlocalarXiv} (see also \cite[Theorem 1.8]{GuSc-2016MinMaxNonlocalarXiv} which even establishes that $L^{ij}$ are linear operators mapping $C^{1,\al}(\partial\Om)\to C^\al(\partial\Om)$).  Now we focus on the more specific behavior of $\mu^{ij}$ and $b^{ij}$.

\subsection{Reduction to the extremal operators}

A very useful tool for obtaining the estimates (i-a) and (i-b) in Theorem \ref{thm:MainNonlinear} is the reduction from a general $F$ in (\ref{eqIn:BulkFNonlinear}) to the particular instance of the Pucci operator, $F=\M^-$.  This is a consequence of the representation of the extremal operators of $\I$ in terms of the D-to-N for $\M^-$, which appeared in Lemma \ref{lemWP:DtoNExtremals}.  Specifically, we record the result of \cite[Prop 4.35]{GuSc-2016MinMaxNonlocalarXiv} as it pertains to $\I$ in this work.  As the proof of this proposition is not particular to the D-to-N mapping, we refer to \cite[Sec 4.6]{GuSc-2016MinMaxNonlocalarXiv} for its proof.

\begin{prop}[see Proposition 4.35, Sec 4.6 of \cite{GuSc-2016MinMaxNonlocalarXiv}]\label{propFN:ExtremalLowerBoundLevyOperator}
	If $L^{ij}$ is any one of the collection of linear operators appearing in Theorem \ref{thm:MainNonlinear}, defined in (\ref{eqFN:LinearFamilyInMinMax}), then for all $\phi\in C^3_c(\partial\Om)$, the following estimate holds:
	\begin{align*}
		M^-(\phi,x)\leq L^{ij}(\phi,x)\leq M^+(\phi,x).
	\end{align*}
	Here, $M^\pm$ are the extremal operators defined in Lemma \ref{lemWP:DtoNExtremals}.  
\end{prop}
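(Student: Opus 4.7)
The plan is to combine the extremal inequality for $\I$ (Lemma \ref{lemWP:DtoNExtremals}) with a linearization argument based on the structure of the min-max representation. Lemma \ref{lemWP:DtoNExtremals} asserts
\begin{align*}
M^-(u-v,x) \leq \I(u,x) - \I(v,x) \leq M^+(u-v,x)
\end{align*}
for every $u,v \in C^{1,\al}(\partial\Om)$, and I would begin by noting that $M^\pm$ inherit positive homogeneity from the Pucci operators $\M^\pm$, namely $M^\pm(t\phi,x) = t M^\pm(\phi,x)$ for every $t \geq 0$, because $U^\pm_{t\phi} = tU^\pm_\phi$ by linear scaling of the Dirichlet problem for $\M^\pm$.

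The next step is to use the fact that each $L^{ij}$ in (\ref{eqFN:LinearFamilyInMinMax}) is produced by the min-max construction of \cite[Theorem 1.6 and Prop 1.7]{GuSc-2016MinMaxNonlocalarXiv} as a supporting affine functional to the nonlinear operator $\I$ at a distinguished witness function $v^{ij} \in C^{1,\al}(\partial\Om)$. That is, one has
\begin{align*}
\I(v^{ij},x) = f^{ij}(x) + L^{ij}(v^{ij},x),
\end{align*}
and the pair $(i,j)$ is the active branch of the inner max and outer min in a neighborhood of $v^{ij}$. Granting this, I would apply the extremal inequality at $u = v^{ij} + t\phi$ and $v = v^{ij}$ for $\phi \in C^3_c(\partial\Om)$ and $t > 0$ small enough that $(i,j)$ remains active, yielding
\begin{align*}
tM^-(\phi,x) = M^-(t\phi,x) \leq \I(v^{ij}+t\phi,x) - \I(v^{ij},x) = t L^{ij}(\phi,x) \leq M^+(t\phi,x) = tM^+(\phi,x),
\end{align*}
where the middle equality uses linearity of $L^{ij}$ together with the local activity of the $(i,j)$ branch. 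Dividing by $t$ produces the desired sandwich $M^-(\phi,x) \leq L^{ij}(\phi,x) \leq M^+(\phi,x)$.

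The main obstacle is to justify the supporting linearization property, namely that for each index $(i,j)$ in the min-max there truly exists a witness $v^{ij}$ at which the corresponding branch is active in a whole $C^{1,\al}$-neighborhood. A generic min-max representation only guarantees the pointwise envelope equation, and the sandwich for individual branches does not follow mechanically from that alone. Once this structural content is available, however, the remainder of the proof is simply the limiting computation above, which is completely insensitive to the D-to-N origin of $\I$. This is precisely why the authors extract the necessary structural fact from the abstract construction in \cite[Section 4.6]{GuSc-2016MinMaxNonlocalarXiv} and quote the conclusion rather than reproducing it here.
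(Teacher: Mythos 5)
First, note that the paper does not actually prove this proposition: it is quoted verbatim from \cite[Prop.\ 4.35, Sec.\ 4.6]{GuSc-2016MinMaxNonlocalarXiv}, with the remark that the argument ``is not particular to the D-to-N mapping.'' So the only meaningful comparison is between your reconstruction and the argument in that reference, and your high-level strategy --- difference quotients of $\I$, the extremal inequality of Lemma \ref{lemWP:DtoNExtremals}, positive homogeneity of $M^\pm$, and division by $t$ --- is indeed the right skeleton.

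The genuine gap is the one you flag yourself, but you misidentify what the missing structural input actually is. You posit that each $L^{ij}$ is supported at a witness $v^{ij}$ at which the $(i,j)$ branch is \emph{exactly active on a whole $C^{1,\al}$-neighborhood}, so that $\I(v^{ij}+t\phi,x)-\I(v^{ij},x)=tL^{ij}(\phi,x)$ for small $t>0$. This is both too strong to expect (for a genuinely nonlinear $\I$ no single affine branch is active on an open set of data, and a min-max envelope can in any case be padded with branches violating any individual bound, so \emph{some} structural input is unavoidable) and not what the construction in \cite{GuSc-2016MinMaxNonlocalarXiv} supplies. What that construction gives is that the $L^{ij}$ are obtained from derivatives of the Lipschitz map $\I:C^{1,\al}(\partial\Om)\to C^{\al}(\partial\Om)$ at points of differentiability (and weak limits and convex combinations thereof). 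The correct version of your computation is therefore: at a point $v$ of differentiability one has $\I(v+t\phi,x)-\I(v,x)=t\,D\I(v)\phi(x)+o(t)$, the sandwich $tM^-(\phi,x)\leq t\,D\I(v)\phi(x)+o(t)\leq tM^+(\phi,x)$ follows from Lemma \ref{lemWP:DtoNExtremals} and homogeneity, and dividing by $t$ and letting $t\to0^+$ kills the $o(t)/t$ term. One then extends from these differentials to all $L^{ij}$ by observing that Lemma \ref{lemWP:DtoNExtremals} applied to $\I=M^-$ (resp.\ $M^+$) shows $M^-$ is superadditive and $M^+$ subadditive, hence the set $\{L: M^-\leq L\leq M^+\}$ is convex and closed under pointwise limits. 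With ``active branch on a neighborhood'' replaced by ``differentiability at $v^{ij}$ plus this closure argument,'' your proof becomes the one in the reference; as written, the middle equality in your displayed chain is unjustified.
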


Proposition \ref{propFN:ExtremalLowerBoundLevyOperator} means that in order to establish the estimates in Theorem \ref{thm:MainNonlinear}, we can focus on obtaining, e.g. lower bounds for $M^\pm(\phi,x)$. This is a welcome simplification to the problem, for example because $\M^\pm$ (for equation (\ref{eqIn:BulkExtensionGeneric})) are convex/concave as well as rotation and translation invariant, and they enjoy good regularity theory ($C^{2,\al}$ boundary data produces $C^{2,\al'}$ solutions).

\subsection{The ring estimate, Theorem \ref{thm:MainNonlinear} (i-a)}

Here we provide the proof of the ring estimate that appears in Theorem \ref{thm:MainNonlinear} (i-a).

\begin{proof}[Proof of Theorem \ref{thm:MainNonlinear} part (i-a)]
	
	First, we note that $x\in\partial\Om$ is just a parameter, and a translation of the equation (\ref{eqIn:BulkExtensionGeneric}) so that $x=0$ does not change any of the assumptions on $F$.  Thus, without loss of generality, we take $x=0\in\partial\Om$.  We will obtain the desired ring estimate by rescaling the domain in (\ref{eqIn:BulkExtensionGeneric}) from $\Om$ to a larger set, $(1/r)\Om$, and representing $U_\phi$ in $\Om$ as a rescaling of an appropriate function, $\tilde U_{\tilde \phi}$, in $(1/r)\Om$.  The advantage here is to utilize the fact that $\displaystyle\partial \left((1/r)\Om\right)$ is becoming flat in a $C^2$ fashion under this scaling, and so we can use solutions in one fixed domain to build appropriate sub and super solutions for equations in $(1/r)\Om$.  We now proceed with the construction.
	
	Thanks to Lemma \ref{lemTOOL:RingsAmbientIntrinsic}, we will work with functions and sets in $\real^{n+1}$ and actually show a related estimate  (which is no harm when $r$ is small).  When $B^{n+1}_r\subset \real^{n+1}$ is the usual ball in $\real^{n+1}$, we will prove:
	\begin{align}\label{eqFN:RingNewGoalLower}
		 C_1 r^{-1}\leq\mu^{ij}(x, (B^{n+1}_{(7/4)r}\setminus B^{n+1}_{(5/4)r})\intersect\partial\Om)
	\end{align}
	and
	\begin{align}\label{eqFN:RingNewGoalUpper}
		\mu^{ij}(x, (B^{n+1}_{(9/4)r}\setminus B^{n+1}_{(3/4)r})\intersect\partial\Om)
		\leq C_2 r^{-1}.
	\end{align}
	Thus, for ease of presentation let us introduce the notation for respectively the small and big rings:
	\begin{align*}
		R^S_r := (B^{n+1}_{(7/4)r}\setminus B^{n+1}_{(5/4)r})\intersect\partial\Om
		\ \ \text{and}\ \ 
		R^B_r := (B^{n+1}_{(9/4)r}\setminus B^{n+1}_{(3/4)r})\intersect\partial\Om.
	\end{align*}
	The reason for this simplification is to be able to work with $\phi$ that are actually defined in all of $\real^{n+1}$, and use their restrictions to various submanifolds as Dirichlet data.  To this end, let $\phi^r_l$ and $\phi^r_u$ be $C^2(\real^{n+1})$ lower and upper barrier functions such that
   	\begin{align}\label{eqFN:PhiRDef1Inequality}
   		0\leq \phi^r_l\leq \Indicator_{R^S_r}\leq \Indicator_{R^B_r}\leq \phi^r_u,
   	\end{align}
   	and furthermore, just for concreteness, we assume $\phi_r^l$, $\phi_r^u$ satisfy
   	\begin{align}\label{eqFN:PhiRDef2lower}
   		\begin{cases}
   		\phi^r_l&\equiv 1\ \text{in}\ B^{n+1}_{(13/8)r}\setminus B^{n+1}_{(11/8)r}\\
   		\phi^r_l&\equiv 0\ \text{outside}\ B^{n+1}_{(14/8)r}\setminus B^{n+1}_{(10/8)r}
   		\end{cases}
   	\end{align}
   	and
   	\begin{align}\label{eqFN:PhiRDef3upper}
   		\begin{cases}
   		\phi^r_u&\equiv 1\ \text{in}\ R_r^B\\
   		\phi^r_u&\equiv 0\ \text{outside}\ B^{n+1}_{(19/8)r}\setminus B^{n+1}_{(5/8)r}.
   		\end{cases}
   	\end{align}
   	Thus we see that
   	\begin{align}\label{eqFN:RingMassAndPhiLowerIntegrals}
   		\int_{\partial\Om\setminus\{x\}}\phi^r_l(y)\mu^{ij}(x,dy)
   		\leq \int_{\partial\Om\setminus\{x\}}\Indicator_{R^S_r}(y)\mu^{ij}(x,dy)
   	\end{align}
	and
   	\begin{align}\label{eqFN:RingMassAndPhiUpperIntegrals}
		\leq \int_{\partial\Om\setminus\{x\}}\Indicator_{R^B_r}(y)\mu^{ij}(x,dy)
   		\leq \int_{\partial\Om\setminus\{x\}}\phi^r_u(y)\mu^{ij}(x,dy).
   	\end{align}
   	Furthermore, since
   	\begin{align*}
   		\phi^r_l(x)=\phi^r_u(x)=0\ \text{and}\ \grad\phi^r_l(x)=\grad\phi^r_u(x)=0,
   	\end{align*}
   	we see that the operators in (\ref{eqFN:LinearFamilyInMinMax}) simplify to
   	\begin{align}\label{eqFN:RingMassGoalAsLOnPhiLowerUpper}
   		L^{ij}(\phi^r_l,x) = \int_{\partial\Om} \phi^r_l(y)\mu^{ij}(x,dy)\ \ \ 
   		\text{and}\ \ \  
   		L^{ij}(\phi^r_u,x) = \int_{\partial\Om} \phi^r_u(y)\mu^{ij}(x,dy).
   	\end{align}
   	Thus, to conclude (\ref{eqFN:RingNewGoalLower}) and (\ref{eqFN:RingNewGoalUpper}), it suffices, via Proposition \ref{propFN:ExtremalLowerBoundLevyOperator} combined with (\ref{eqFN:RingMassGoalAsLOnPhiLowerUpper}), (\ref{eqFN:RingMassAndPhiLowerIntegrals}), and (\ref{eqFN:RingMassAndPhiUpperIntegrals}) to show the same bounds for the normal derivatives of the functions $U_{\phi^r_l}$ and $U_{\phi^r_u}$ that solve (\ref{eqIn:BulkExtensionGeneric}) with respectively $F=\M^-$ and $F=\M^+$.

	Now, we record our target to achieve (\ref{eqFN:RingNewGoalLower}) and (\ref{eqFN:RingNewGoalUpper}).  Assume that $U_{\phi^l_r}$ and $U_{\phi^u_r}$ are respectively the solutions of (\ref{eqIn:BulkExtensionGeneric}) for $F=\M^-$ and $F=\M^+$ with Dirichlet data given respectively by $\displaystyle\phi^r_l|_{\partial\Om}$ and $\displaystyle\phi^r_u|_{\partial\Om}$.  We will show
\begin{align}\label{eqFN:RingGoalNormalDerivativeUsualScale}
\text{\textbf{goal:} there are universal constants so that}\ 
C_1r^{-1}\leq\partial_\nu U_{\phi^r_l}(0)\ \ 
\text{and}\ \ 
\partial_\nu U_{\phi^r_u}(0)\leq C_2 r^{-1}.
\end{align}
We will give the details for the bound on $\partial_\nu U_{\phi^r_l}$, and the upper bound for $\partial_\nu U_{\phi^r_u}$ will follow analogously.

We will represent $U_{\phi^r_l}$ as a rescaling of a particular function, $\tilde U$, in a larger domain, by defining
\begin{align*}
	\tilde \Om_r = (1/r)\Om,
\end{align*}
\begin{align*}
	\M^-(\tilde U) = 0\ \text{in}\ \tilde\Om_r\ \text{and}\ 
	\tilde U|_{\partial\tilde\Om_r} = \phi^l_1|_{\partial\tilde\Om_r},
\end{align*}
and
\begin{align*}
	U_{\phi^l_r}(y) = \tilde U(\frac{y}{r})\ \text{for}\ y\in\Om.
\end{align*}
This means that
\begin{align*}
	\displaystyle M^-(\phi^l_1|_{\partial\Om},y) = \partial_\nu U_{\phi^l_r}(y)
	= r^{-1}\partial_\nu \tilde U(y).
\end{align*}
Thus, our new goal will be to show that
\begin{align}\label{eqFN:UnscaledGoal}
	\textbf{unscaled goal:}\ C_1\leq \partial_\nu \tilde U(0).
\end{align}

In order to get a lower estimate on $\partial_\nu \tilde U$ that is truly independent of $r$, we will use an auxiliary function that is independent of $r$ and defined in a fixed domain, independent of $r$.  Let us call the ``half'' ball,
\begin{align*}
	B^+_{10}(0) := \{y\in\real^{n+1}\ :\ y\cdot\nu(0)>0\}\intersect B^{n+1}_{10}.
\end{align*}
Then we can define the function $\tilde V$ as the unique solution of
\begin{align*}
	\M^-(\tilde V)=0\ \text{in}\ B^+_{10},\ \ \text{and}\ \ 
	\tilde V|_{\partial B^+_{10}} = \phi^l_1|_{\partial B^+_{10}}.
\end{align*}
The advantage of $\tilde V$ is that it is independent of $r$, and so as long as we can show that $\tilde U-\tilde V$ is small enough in the $C^{1,\al}$ sense, then we will be able to conclude the auxiliary goal in (\ref{eqFN:UnscaledGoal}).

In order to get the estimate between $\tilde U$ and $\tilde V$, we must introduce two more auxiliary functions.  The first is $\tilde W_r$, defined in the domain $\tilde B$,
\begin{align*}
	\tilde B := \tilde\Om_r\intersect B^+_{10},
\end{align*}
and 
\begin{align*}
	\M^-(\tilde W_r)=0\ \ \text{in}\ \ \tilde B,\ \ \text{and}\ \ 
	\tilde W_r|_{\partial\tilde B} = \phi^l_1|_{\partial\tilde B}.
\end{align*}
Thus, since $\tilde U>0$ inside $\Om_r$, we see that $\tilde W_r$ is a subsolution (including ordering of boundary data) to the equation for $\tilde U$ (or vice-versa, $\tilde U$ is a supersolution for the equation for $\tilde W_r$), hence by the comparison principle,
\begin{align*}
	\tilde U\geq \tilde W_r\ \text{in}\ \tilde B,\ \ \text{and}\ \ 
	\partial_\nu \tilde U(0)\geq \partial_\nu \tilde W_r(0).
\end{align*}
Now, to conclude, we will show a lower bound for $\partial_\nu\tilde W_r(0)$.

We note that the distance between the half space determined by the tangent to $\partial\tilde\Om_r$ in $B^{n+1}_{10}$, $\{y\in\real^{n+1}\ :\ y\cdot\nu(0)>0\}\intersect B^{n+1}_{10}$, and to $\partial\tilde\Om_r\intersect B^{n+1}_{10}$ is vanishing as $r\to0$ (in particular, it is of order $Cr$).  Furthermore, by the boundary estimates in \cite[Theorem 1.1]{SilvestreSirakov-2013boundary}, we know that 
\begin{align*}
	\norm{\tilde U}_{C^{1,\al}(\tilde\Om_r\intersect B^{n+1}_{10})}\leq C\norm{\phi^l_1|_{\partial\tilde\Om_r}}_{C^{1,\al}}\leq C,
\end{align*}
(note, by the Evans-Krylov Theorem, $\tilde U$ is actually $C^{2,\gam}$, but we only invoke estimates for $\tilde U$ and $\grad \tilde U$).
Hence, by the flattening of $\partial\tilde\Om_r$ as $r\to0$, we see that on the ``lower'' boundary of $B^+_{10}$, we can make $\tilde U$ and $\phi^l_1$ close:
\begin{align*}
	\norm{\tilde U-\phi^l_1}_{C^{1,\al}(\{y\in\real^{n+1}\ :\ y\cdot\nu(0)=0\}\intersect B^{n+1}_{10})}
	\to0\ \ \text{as}\ \ r\to0.
\end{align*}
This means that if we define the function
\begin{align*}
	\tilde Z = \tilde W_r-\tilde V,
\end{align*}
then in the common domain of their equations, we have, by the properties of viscosity solutions
\begin{align*}
	\M^-(\tilde Z)\leq 0\ \ \text{and}\ \ \M^+(\tilde Z)\geq 0\ \ \text{in}\ \ \tilde\Om_r\intersect B^+_{10},
\end{align*}
and thanks to the boundary values for $\tilde V$
\begin{align*}
	\norm{\tilde Z|_{\partial (\tilde\Om_r\intersect B^+_{10})}}_{C^{1,\al}}\to 0\ \text{as}\ r\to0.
\end{align*}
Furthermore, since $\tilde V$ is independent of $r$, and since $\tilde V$ attains a minimum at $y=0\in\partial B^+_{10}$ by the Hopf principle, we know that for a $C$ that depends only on universal parameters and the choice of $\phi^l_1$,
\begin{align*}
	\partial_\nu \tilde V(0) = C>0.
\end{align*}

Hence, taking $R$ small enough (recall $R$ from Theorem \ref{thm:MainNonlinear} (i-a)), so that for $r\leq R$, we have
\begin{align*}
	\norm{\tilde Z|_{\partial (\tilde\Om_r\intersect B^+_{10})}}_{C^{1,\al}}< \frac{C}{2},
\end{align*}
we can then conclude for these $r\leq R$ that
\begin{align*}
	\partial_\nu \tilde W_r(0)\geq \frac{C}{2},
\end{align*}
and also, by the above comparison of $\tilde W_r$ and $\tilde U$,
\begin{align*}
	\partial_\nu \tilde U(0)\geq \frac{C}{2}=C_1,
\end{align*}
where $C_1$ is a universal constant.
As noted earlier, rescaling $\tilde U$, gives the lower bound.

The proof of the upper bound follows analogously.  Instead of using $\M^-$ to define the functions $\tilde U$, $\tilde W_r$, $\tilde V$, $\tilde Z$, we will use the operator $\M^+$.  Also, at the stage of using comparison to switch from $\tilde U$ to $\tilde W_r$, it will be useful to use boundary data that is identically $1$ outside of $B^{n+1}_{3/4}$ so that $\tilde W_r$ can serve as a supersolution for $\tilde U$.  Thus, this same function will be used to determine the boundary values of $\tilde V$, instead of $\phi^u_1$, which would have been the direct analog of the argument.  Everything else follows similarly.  
\end{proof}


\subsection{A lower bound for $\mu^{ab}(x,\cdot)$ in Theorem \ref{thm:MainNonlinear} part (i)(b)}

Next, we prove the lower bound for $\mu^{ab}$ in Theorem \ref{thm:MainNonlinear} (i-b). Our approach will be to work in the context of linear equations with smooth coefficients, and invoke some techniques and results about the related Green's functions from e.g. \cite{Kenig-1993PotentialThoeryNonDiv}.  In order to transfer results between fully nonlinear equations and equations with smooth coefficients, we will collect various facts and observations from the literature.    This first fact is a technique for approximating solutions of fully nonlinear equations by those of linear equations with smooth coefficients.  It is more or less well known to specialists, but there does not seem to be any standard reference.  Here we present the technique as used by Feldman \cite[Proof of Prop. 2.2]{Feldman-2001LipschitzFB2Phase}, where it is proved in complete detail.  Since this is nearly exactly as implemented in \cite{Feldman-2001LipschitzFB2Phase}, we simply list a sketch of the steps without detailed justification/explanation.

\begin{lem}[Smooth Linear Approximation]\label{lemFN:ApproxByLinEqs}
	Any solution of Pucci's equation can be approximated by solutions of linear equations with smooth coefficients and the same ellipticity bounds.
	
	Given $\phi\in C(\partial\Om)$ and $U_\phi$ solving (\ref{eqIn:BulkExtensionGeneric}) and (\ref{eqIn:BulkFNonlinear}) with $F(D^2U,x)=\M^-(D^2U)$, there exists a family of coefficients, $A^\del(x)$, depending on $U_\phi$, which are uniformly elliptic all with the same constants $(\lam,\Lam)$ and smooth in $x$, such that for $U^\del_\phi$ solving 
	\begin{align*}
		\begin{cases}
		\Tr(A^\del(x)D^2U^\del_\phi)=0,\ &\text{in}\ \Om\\
		U^\del_\phi=\phi\ &\text{on}\ \partial\Om,
		\end{cases}
	\end{align*}
	we recover
	\begin{align*}
		\norm{U^\del_\phi-U_\phi}_{L^\infty(\Om)}\to0\ \ \text{as}\ \del\to0.
	\end{align*}
\end{lem}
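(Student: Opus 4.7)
The plan exploits the min-max form $\M^-(M) = \min_{\lam\Id\leq B\leq\Lam\Id}\Tr(BM)$ together with the equivalent formula $\M^-(M)=\Lam\sum_{v_i\leq 0}v_i+\lam\sum_{v_i>0}v_i$ from Definition \ref{def:MaximalPucci}. By interior Evans-Krylov regularity applied to the Pucci extremal equation, $U_\phi\in C^{2,\gam}_{\loc}(\Om)$, so at each $x\in\Om$ I can select the Borel-measurable optimal matrix $A(x):=\Lam P_-(x)+\lam P_+(x)$, where $P_\pm(x)$ denote the spectral projections of $D^2U_\phi(x)$ onto its non-positive and positive eigenspaces, respectively. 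Then $\lam\Id\leq A(x)\leq\Lam\Id$ with $\Tr(A(x)D^2U_\phi(x))=0$ pointwise in $\Om$, although $A$ may be discontinuous at points of eigenvalue crossing. After extending $A$ to a bounded Borel selection on a neighborhood of $\overline\Om$ that still takes values in the convex set $\{\lam\Id\leq B\leq\Lam\Id\}$, I would mollify by setting $A^\del:=A\ast\eta_\del$, where $\eta_\del$ is a standard mollifier. Convexity of the admissible set preserves uniform ellipticity with the same constants, and $A^\del$ is smooth in $x$. Define $U^\del_\phi$ to be the unique classical solution of the linear Dirichlet problem from the statement, the existence of which is provided by \cite[Chp 9]{GiTr-98}.

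For the convergence $\norm{U^\del_\phi - U_\phi}_{L^\infty(\Om)}\to 0$, set $f^\del:=\Tr(A^\del D^2U_\phi)=\Tr((A^\del-A)D^2U_\phi)$. Since $A^\del\to A$ in $L^p_{\loc}(\Om)$ for every $p<\infty$ and $D^2U_\phi\in L^\infty(\Om_\ep)$ for any $\ep>0$ (where $\Om_\ep:=\{x\in\Om : d(x,\partial\Om)>\ep\}$), we have $\norm{f^\del}_{L^{n+1}(\Om_\ep)}\to 0$ as $\del\to 0$ for each fixed $\ep$. The difference $W^\del:=U^\del_\phi-U_\phi$ satisfies $\Tr(A^\del D^2W^\del)=-f^\del$ in $\Om$ with $W^\del\equiv 0$ on $\partial\Om$, so the Alexandrov-Bakelman-Pucci maximum principle applied on $\Om_\ep$ yields
\[ \norm{W^\del}_{L^\infty(\Om_\ep)}\leq \max_{\partial\Om_\ep}\abs{W^\del} + C \norm{f^\del}_{L^{n+1}(\Om_\ep)}, \]
where the constant $C$ depends only on dimension, ellipticity, and $\diam(\Om)$, and is therefore uniform in $\del$ and $\ep$.

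The main obstacle will be controlling $W^\del$ near $\partial\Om$, where $D^2U_\phi$ need not be bounded. I would handle this by observing that both $U^\del_\phi$ and $U_\phi$ are viscosity solutions of uniformly $(\lam,\Lam)$-elliptic equations on the smooth domain $\Om$ with identical Dirichlet data $\phi\in C(\partial\Om)$, so a standard barrier argument (using, e.g., quadratic barriers furnished by the uniform exterior ball property of $\partial\Om$) yields a common modulus of continuity $\omega$ for the family $\{U^\del_\phi\}_\del\cup\{U_\phi\}$ at $\partial\Om$ which depends only on $(\lam,\Lam)$, $\partial\Om$, and the modulus of $\phi$, and in particular is independent of $\del$. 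Hence $\abs{W^\del}\leq 2\omega(\ep)$ on $\partial\Om_\ep\setminus\partial\Om$, which combined with the displayed ABP bound gives $\limsup_{\del\to 0}\norm{W^\del}_{L^\infty(\Om_\ep)}\leq 2\omega(\ep)$. Since $\norm{W^\del}_{L^\infty(\Om\setminus\Om_\ep)}\leq 2\omega(\ep)$ by the same boundary modulus, we obtain $\limsup_{\del\to 0}\norm{W^\del}_{L^\infty(\Om)}\leq 2\omega(\ep)$ for every $\ep>0$, and sending $\ep\to 0$ completes the argument.
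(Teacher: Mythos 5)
Your proposal is correct, and it reaches the conclusion by a genuinely different route from the paper. The paper follows Feldman's scheme: first smooth the operator itself ($\M^-\to\M^{-,k}$ smooth and concave), obtain $C^{2,\al}$ solutions $w^k$ of the smoothed equations, produce coefficients by linearizing along the segment $s\mapsto sD^2w^k(x)$ via the fundamental theorem of calculus (so that $w^k$ solves the linear equation exactly), then mollify those coefficients and pass to the limit twice using stability of viscosity solutions together with the $L^p$-viscosity theory of \cite{CaCrKoSw-96} --- the latter precisely because nothing is known about the quality of the linearized coefficients near $\partial\Om$. You instead exploit concavity of $\M^-$ to apply Evans--Krylov directly to $U_\phi$, freeze the optimal matrix $\Lam P_-+\lam P_+$ from the spectral formula in Definition \ref{def:MaximalPucci} (so that $U_\phi$ itself solves the frozen linear equation pointwise in $\Om$), mollify, and then quantify the error by ABP on the interior sets $\Om_\ep$ combined with a $\del$-uniform boundary modulus of continuity from exterior-ball barriers. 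What each buys: your limit passage is more elementary and quantitative (ABP plus barriers replaces two compactness/stability arguments and the $L^p$-viscosity machinery), and it yields an explicit one-parameter family of coefficients; the paper's route is more robust in that it never needs interior $C^2$ regularity of the \emph{original} solution, only of the solutions of the smoothed concave operators, so it would survive in settings where Evans--Krylov is unavailable for the limiting operator. Both approaches handle the same delicate point --- the coefficients are only measurable where $D^2U_\phi$ has eigenvalue crossings (for you) or are of unknown regularity near $\partial\Om$ (for the paper) --- just with different tools. Your argument is complete for the statement as used later in the paper, since only $L^\infty$ convergence is required there.
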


\begin{proof}[Proof of Lemma \ref{lemFN:ApproxByLinEqs}]
	Again, as mentioned above, we present only a sketch of the proof that comes from \cite[Prop. 2.2]{Feldman-2001LipschitzFB2Phase}.

	Here are the steps:
	\begin{enumerate}
		\item Approximate $\M^-$ by smooth concave functions, $\M^{-,k}$, giving $w^k$ that solve the smoothed equation.  For eventual limiting operations via the stability of viscosity solutions, this requires that $\M^{-,k}\to\M^-$ uniformly on compact subsets of $\mathcal{S}((n+1)\times(n+1))$
		\item Linearize $\M^{-,k}$ over $w^k$, and use the fact that $w^k$ are $C^{2,\al}(\Om)$ (see e.g. \cite{CaCa-95}), which can be done explicitly as
		\begin{align*}
			a^k_{i,j}(x):=\int_0^1 \frac{\partial\M^{-,k}}{\partial P_{i,j}}(sD^2w^k(x))ds.
		\end{align*}
		
		\item Extend $a^k_{i,j}$ to all of $\real^{d+1}$ as simply $a^k_{i,j}(x)=\del_{i,j}$ for all $x\not\in\Om$ (note $\del$  is the Kronecker delta symbol).
		
		\item Mollify $a^k_{i,j}$ to be smooth, denoting them as $a^{k,m}_{i,j}$.
		
		\item Taking the matrix $A^{k,m}=(a^{k,m}_{i,j})$, solve the equation
		\begin{align*}
			\begin{cases}
				\Tr(A^{k,m}D^2w^{k,m})=0\ &\text{in}\ \Om\\
				w^{k,m}=\phi\ &\text{on}\ \partial\Om.
			\end{cases}
		\end{align*}

		\item Confirm that there exists a subsequence $w^k\to U_\phi$ uniformly in $\overline{\Om}$ as $k\to\infty$, as well as a subsequence $w^{k,m}\to w^k$ uniformly in $\overline{\Om}$ for $k$ fixed and $m\to\infty$.  In both cases, one can invoke, for example $C^\al$ estimates, as all of these functions are uniformly bounded with a common bound.  The first convergence and stability result uses regular viscosity solutions theory, and the second convergence uses the the $L^p$ viscosity solutions theory in e.g. \cite{CaCrKoSw-96}.  We note that the limit in both cases uses the fact that viscosity solutions are stable and that the limit equations have unique solutions.
	\end{enumerate}
	
	We briefly remark that the reason for invoking the $L^p$ theory is that it is not known how good are the coefficients $a^k_{i,j}$ in the vicinity of $\partial\Om$.  It seems reasonable in this lemma to want to keep the same boundary values throughout the whole process.  We note that if it so happens that $\phi\in C^{2,\al}(\partial\Om)$, then one can use regular viscosity solutions for both convergence arguments, as this would produce $w^k\in C^{2,\al}(\overline{\Om})$, and hence $a^k_{i,j}\in C^\al(\overline{\Om})$.
\end{proof}

This next lemma is a simple exercise for constructing a sequence of balls linking points in $\Om$, each of whose radius is a (fixed) multiple of the previous.  For $C^2$ domains, it is a simpler property than the Harnack chains that are used in \cite{JerisonKenig-1982BoundaryBehaviourNTADomADViM}, but we keep the same name nonetheless.  We omit the proof. 

\begin{lem}[The Harnack chain distance]\label{lemFN:HarnackChain}  
	If $\partial\Om$ is bounded and $C^2$, then there is a universal $R_0$ so that if $r>0$ is fixed, and  $y\in\Om$ and $x\in\Om\intersect B_{R_0}(y)$, with $d(y,\partial\Om)>2r$ and $d(x,\partial\Om)>2r$ then $x$ and $y$ can be linked by a Harnack chain based on balls of multiples of radius, $r$, so that 
	$N=\#\{\text{balls in the chain}\}\leq C_1\log(\frac{C_2\abs{x-y}}{r})$.  Here, the constants $C_1$ and $C_2$ are independent from $r$, and they depend only on $n$, $\lam$, $\Lam$.
	
	(We note that by Harnack chain based on balls of radius $r$, we mean a sequence of balls that successively overlap, twice each is contained in $\Om$, the first contains $y$ and the last contains $x$, and all of their radii are multiples of $r$.)
\end{lem}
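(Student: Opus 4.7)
The plan is to exploit two consequences of $\partial\Om\in C^2$: (i) a uniform interior ball condition giving a universal $\rho_0>0$ such that every boundary point admits a tangent interior ball of radius $\rho_0$, and (ii) the fact that the nearest-point projection $\pi$ onto $\partial\Om$ is a $C^1$ diffeomorphism on the $\rho_0$-neighborhood of $\partial\Om$. Consequently this neighborhood is foliated by inward normal segments of length $\rho_0$, and for any interior point $z$ in this tube the depth $d(z,\partial\Om)$ coincides with $|z-\pi(z)|$. I set $R_0$ to be a small fixed multiple of $\rho_0$, say $\rho_0/16$, so that every center encountered below remains inside the good tube. The central mechanism is a doubling-radii strategy that accumulates depth exponentially: moving inward along the normal by an amount proportional to the current radius allows the next ball to have double the radius while still satisfying the Harnack chain condition.

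Write $L=|x-y|$. If $L\le 8r$ a single ball $B(x,Cr)$ for universal $C$ covers both endpoints and has its double inside $\Om$, giving $N=1$. In the generic case $r\ll L\le R_0$ I build the chain in three stages:
\begin{enumerate}
\item[(a)] \emph{Ascent from $x$.} Set $x_0=x$, $r_0=r$, and at step $j$ put $x_{j+1}=x_j+\beta r_j\,\nu(\pi(x_j))$ with $r_{j+1}=\alpha r_j$ for fixed universal $\alpha,\beta$ (subject to the constraints recorded below). Stop at $j=k_x$ when $r_{k_x}\sim L$.
\item[(b)] \emph{Large-scale traversal.} Performing (a) symmetrically for $y$ produces a terminal center $\tilde y$. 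A direct estimate yields $d(\tilde x,\partial\Om),d(\tilde y,\partial\Om)\gtrsim L$ while $|\tilde x-\tilde y|\lesssim L$, so an $O(1)$ number of balls of radius comparable to $L/4$ links $\tilde x$ to $\tilde y$.
\item[(c)] \emph{Descent to $y$.} The mirror image of (a).
\end{enumerate}
Since (a) and (c) each contribute $\le\log_\alpha(L/r)+O(1)$ balls and (b) contributes $O(1)$, the total is $N\le C_1\log(C_2L/r)$ as claimed.

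The main technical hurdle is verifying the Harnack chain condition $B(x_j,2r_j)\subset\Om$ throughout the ascent. Inside the good tube the depth at $x_j$ equals $2r+\beta r(\alpha^j-1)/(\alpha-1)$ up to a small error arising from the rotating normal field (controlled by the $C^1$ nature of $\pi$), so to beat $2r_j=2\alpha^j r$ uniformly in $j$ one requires $\beta\ge 2(\alpha-1)$. Simultaneously, overlap of consecutive balls demands $\beta r_j\le r_j+r_{j+1}$, i.e.\ $\beta\le 1+\alpha$. The compatibility $2(\alpha-1)\le\beta\le 1+\alpha$ is nonempty whenever $\alpha\le 3$, so one may take for instance $\alpha=2$ and $\beta=5/2$. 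The hypothesis $L\le R_0=\rho_0/16$ guarantees every $x_j$ stays inside the $\rho_0$-tube where $\pi$ is a diffeomorphism and the interior ball supplies the required inward segments, while the ``multiples of $r$'' requirement is enforced by rounding the $r_j=2^j r$ to the nearest integer multiple, which alters nothing essential. I expect this geometric verification---in particular the control of the depth under the rotating normal field---to be the most delicate point, but it is entirely deterministic and delivers the claimed logarithmic count with explicit universal constants.
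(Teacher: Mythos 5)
The paper itself omits the proof of this lemma, so there is no argument of the authors to compare against; your ascent--traverse--descent architecture with geometrically growing radii is the standard and correct way to prove it, and your bookkeeping for the ascent (the constraints $\beta\ge 2(\alpha-1)$ and $\beta\le 1+\alpha$) is right. Two small remarks on that part: since $x_{j+1}=x_j+\beta r_j\,\nu(\pi(x_j))$ keeps every $x_j$ on the single normal ray through $\pi(x)$ for as long as the depth stays below $\rho_0$, the depth formula $d(x_j,\partial\Om)=d(x,\partial\Om)+\beta r(\alpha^j-1)/(\alpha-1)$ is exact and the ``rotating normal field'' error you single out as the delicate point does not actually arise (and once the depth exceeds $\rho_0/2$ the containment $B^{n+1}_{2r_j}(x_j)\subset\Om$ is automatic because $r_j\lesssim R_0\ll\rho_0$); also your base case is slightly off, since $d(x,\partial\Om)>2r$ does not give $B^{n+1}_{2Cr}(x)\subset\Om$ for $C=8$, so the case $L\le 8r$ still requires an $O(1)$ chain rather than a single ball.

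The genuine gap is stage (b). Knowing $d(\tilde x,\partial\Om),\,d(\tilde y,\partial\Om)\gtrsim L$ and $|\tilde x-\tilde y|\lesssim L$ does \emph{not} by itself produce an $O(1)$ chain from $\tilde x$ to $\tilde y$: one must also show that some path of length $O(L)$ joining them stays at depth $\gtrsim L$. This is precisely where the hypothesis $|x-y|\le R_0$ with $R_0$ small must be used through the \emph{exterior} ball condition (the two-sided reach) of the $C^2$ boundary, not merely the interior one that you invoke. With only an interior ball condition the statement is false: take $\Om=B^{n+1}_1\setminus E$ with $E$ a smooth plate of thickness $\delta$, and $x,y$ on opposite sides of $E$ at depth $h$ with $r=h/4$; then $|x-y|/r=O(1)$, yet any Harnack chain must travel around $E$, and since consecutive admissible radii grow at most geometrically from the initial scale $h$, such a chain has $\gtrsim\log(1/h)$ balls. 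The exterior ball condition with radius $\rho_0$ forces $\delta\ge 2\rho_0>R_0$ and excludes such pairs; more constructively, for $R_0\ll\rho_0$ both $x$ and $y$ lie in a single graph chart over $T_{\pi(x)}\partial\Om$, the normals $\nu(\pi(x))$ and $\nu(\pi(y))$ differ by $O(L/\rho_0)$, and hence $\tilde x$ and $\tilde y$ sit at height $\gtrsim L$ above the \emph{same} graph, with the segment between them remaining at comparable height. You should add this verification; with it, your proof closes.
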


In the next couple of results, in investigating the Harmonic measure of $B_r^{n+1}(h)\intersect\Om$, it will be useful to use an auxiliary ball that is actually inside $\Om$, and has both a size and distance to $\partial\Om$ that are comparable to $r$.  We call this ball, $\tilde B_r$, and we record its definition here:
\begin{DEF}\label{defFN:AuxiliaryBall}
	Given $h\in\partial\Om$ and $B_r^{n+1}(h)\intersect\Om$, the auxiliary ball is $\tilde B_r=B_r^{n+1}(h+4r\cdot \nu (h))$ (recall $\nu(h)$ is the inward normal vector at $h$).  We will call $\hat h=h+4r\cdot \nu (h)$
\end{DEF}

The next two results apply to any operator of the form $L_Au(x)=\Tr(A(x)D^2u(x))$ such that $A$ is smooth and uniformly $\lam,\Lam$-elliptic.  The resulting bounds depend only on dimension and $\lam,\Lam$.  However, we only use them for the $A^\del$ produced by Lemma \ref{lemFN:ApproxByLinEqs}, and so that is how we will present their results.  The next two lemmas are a blending of ideas from \cite[Section 5]{Kenig-1993PotentialThoeryNonDiv} and \cite[Appendix B]{CaSoWa-05}.

\begin{lem}\label{lemFN:GreenMassOnABall}
	Let $A^\del$ be as in Lemma \ref{lemFN:ApproxByLinEqs} and $G^\del$ be the Green's function for $A^\del$ in $\Om$.
	If $x\in\Om$, $h\in\partial\Om$, $\abs{x-h}=l$, $r$ is small enough, $d(x,\partial\Om)>2r$, and $\tilde B_r$ is the ball $B_r^{n+1}(h+4r\nu(h))\subset\Om$, then there exists a universal $\eta\geq n$ and $C$ so that
	\begin{align*}
		C (\frac{r}{l})^{\eta}\leq \frac{1}{r^2}\int_{\tilde B_r} G^\del(x,z)dz.
	\end{align*}
\end{lem}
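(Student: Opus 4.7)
The plan is a Harnack-chain argument linking the singularity at $x$ to the auxiliary ball $\tilde B_r$. I view $G^\del(x,\cdot)$ as a positive solution of the formal adjoint equation $L_{A^\del}^*\,v=0$ in $\Om\setminus\{x\}$; since $A^\del$ is smooth and uniformly $(\lam,\Lam)$-elliptic by Lemma \ref{lemFN:ApproxByLinEqs}, the adjoint $L^*_{A^\del}$ is another uniformly $(\lam,\Lam)$-elliptic operator with smooth coefficients, and the Krylov--Safonov Harnack inequality applies with a constant $c_0\in(0,1)$ depending only on $n,\lam,\Lam$. The target estimate is pointwise: I will show that
\begin{align*}
    G^\del(x,z) \ \geq\ c\, r^{-(n-1)}\left(\frac{r}{l}\right)^{\tilde\eta} \qquad \text{for all } z\in\tilde B_r,
\end{align*}
after which integrating over $\tilde B_r$ (of volume $\sim r^{n+1}$) and dividing by $r^2$ yields the claim with $\eta:=\max(\tilde\eta,n)$, since $r/l\leq 1$ allows us to weaken the exponent freely.

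The first step is a base interior bound. I pick a point $z_0\in\Om$ with $\abs{x-z_0}=r$; since $d(x,\partial\Om)>2r$, such $z_0$ lies well inside $\Om$ with $d(z_0,\partial\Om)>r$. Classical Green's function theory for uniformly elliptic operators with smooth coefficients (Littman--Stampacchia--Weinberger, Gr\"uter--Widman) gives $G^\del(x,z_0)\geq c\,r^{-(n-1)}$ with $c>0$ depending only on $n,\lam,\Lam$, crucially \emph{not} on the smoothness scale of $A^\del$.

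The second step propagates this bound via Lemma \ref{lemFN:HarnackChain}. Since $\tilde B_r\subset\{y:d(y,\partial\Om)\geq 3r\}$ and $\abs{z_0-z}\leq \abs{x-\hat h}+2r\leq l+6r\leq C\,l$ (using $l>2r$, which follows from $l=\abs{x-h}\geq d(x,\partial\Om)>2r$), the lemma delivers a Harnack chain from $z_0$ to an arbitrary $z\in\tilde B_r$ of length
\begin{align*}
    N \ \leq\ C_1\log\!\left(\frac{C_2\, l}{r}\right),
\end{align*}
using balls of radius proportional to $r$, all contained in $\Om\setminus\{x\}$ (a minor shift of the first ball keeps it off of $x$). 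Applying Harnack across each ball loses a factor $c_0$, so
\begin{align*}
    G^\del(x,z) \ \geq\ c_0^N\, G^\del(x,z_0) \ \geq\ c\, c_0^N\, r^{-(n-1)}.
\end{align*}
The exponentiation $c_0^N=\exp(N\log c_0)\geq \exp(-C_1\abs{\log c_0}\log(C_2 l/r))=C_3\,(r/l)^{\tilde\eta}$ with $\tilde\eta:=C_1\abs{\log c_0}$ then gives the sought pointwise bound, from which integration over $\tilde B_r$ completes the argument.

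The main obstacle is the base estimate in the first step. I need the constant $c$ there to be \emph{uniform} in $\del$ (depending only on $n,\lam,\Lam$), since the whole framework of this section requires passing $\del\to 0$ via Lemma \ref{lemFN:ApproxByLinEqs}. Classical interior Green's function bounds do provide such uniformity because they rely only on ellipticity (not on moduli of continuity or smoothness of the coefficients), but one must be careful to invoke a version of the estimate whose constants are explicitly in terms of $\lam,\Lam,n$; alternatively one can obtain the same bound via an averaging argument using the distributional identity $L_{A^\del}G^\del(x,\cdot)=-\delta_x$ together with the interior maximum principle, sidestepping any dependence on higher regularity of $A^\del$.
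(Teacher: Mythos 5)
Your overall architecture (a base bound near the pole plus a Harnack chain of length $N\leq C_1\log(C_2 l/r)$, exponentiated into a power $(r/l)^{\tilde\eta}$) is the same engine the paper uses, but the object you run it on creates two genuine gaps, both tied to the fact that $L_{A^\del}$ is in \emph{non-divergence} form. First, $G^\del(x,\cdot)$ in the second variable does not solve a uniformly elliptic non-divergence equation: the formal adjoint is the double-divergence operator $L^*_{A^\del}v=\sum_{i,j}\partial_{ij}(a^\del_{ij}v)$, which, when expanded, carries first- and zeroth-order coefficients built from $\grad A^\del$ and $D^2A^\del$. These blow up as $\del\to0$ (the $A^\del$ come from mollification in Lemma \ref{lemFN:ApproxByLinEqs}), so the Krylov--Safonov Harnack constant you invoke for the chain is \emph{not} controlled by $n,\lam,\Lam$ alone. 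There is a Harnack theory for (normalized) adjoint solutions with universal constants (Bauman; Fabes--Garofalo--Mar\'in-Malav\'e--Salsa), but it is a different theorem and your justification ("the adjoint is another uniformly elliptic operator") is not correct as stated. Second, your base estimate $G^\del(x,z_0)\geq c\,r^{-(n-1)}$ with $c=c(n,\lam,\Lam)$ is a divergence-form fact (Littman--Stampacchia--Weinberger, Gr\"uter--Widman). For non-divergence operators with only bounded measurable ellipticity data, pointwise two-sided Green's function bounds of this type fail in general, and with smooth coefficients the constants degenerate with the smoothness scale --- the paper itself warns that "the Green's function for non-divergence equations are well known not to be well behaved pointwise." So neither ingredient of your pointwise lower bound on $G^\del(x,\cdot)$ survives the limit $\del\to0$ that this section requires.

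The paper sidesteps both problems by never estimating $G^\del$ pointwise in the second variable. It sets $w(y)=\int_{\tilde B_r}G^\del(y,z)\,dz$, which as a function of the \emph{first} variable solves the honest non-divergence problem $L_{A^\del}w=-\Indicator_{\tilde B_r}$ in $\Om$ with $w=0$ on $\partial\Om$. The base bound $w\geq c\,\theta r^2$ near $\hat h$ then comes from comparison with the explicit paraboloid $p(y)=\theta(r^2-\abs{y-\hat h}^2)$, and the propagation to $x$ uses the ordinary Krylov--Safonov Harnack inequality for $L_{A^\del}$-solutions along the chain of Lemma \ref{lemFN:HarnackChain} --- both steps with constants depending only on $n,\lam,\Lam$. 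If you want to salvage your route, you would have to replace your two claims with the adjoint-solution Harnack machinery and a uniform lower bound for normalized adjoint solutions, which is considerably heavier than the paper's barrier-plus-Harnack argument applied directly to $w$.
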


\begin{lem}\label{lemFN:HarmonicMeasureGreenIntegralEstimate}
	Let $A^\del$ be as in Lemma \ref{lemFN:ApproxByLinEqs}, $G^\del$ the Green's function for $A^\del$ in $\Om$, and $\om^\del$ be the harmonic measure for $A^\del$ in $\Om$.
	If $x\not\in B_r^{n+1}(h+4\nu(h))$, and $r$ is small enough,
	\begin{align*}
		\om_x(B^{n+1}_r(h)\intersect\partial\Om)
		\geq \frac{1}{r^2}\int_{\tilde B_{r}}G^\del(x,z)dy,
	\end{align*}
	where as above we are using $\tilde B_r=B_r^{n+1}(h+4\nu(h))$.
\end{lem}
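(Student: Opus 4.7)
The strategy is to compare $u(x) := \om^\del_x(B^{n+1}_r(h) \intersect \partial \Om)$ and $w(x) := \frac{1}{r^2} \int_{\tilde B_r} G^\del(x,z)\,dz$ via the maximum principle for $L^\del := \Tr(A^\del D^2\cdot)$ on the exterior region $\Om \setminus \tilde B_r$. By the defining property of harmonic measure, $u$ is $L^\del$-harmonic in $\Om$ with boundary data $\chi_{B^{n+1}_r(h) \cap \partial \Om}$; by the defining property of the Green's function ($-L^\del G^\del(\cdot,z)=\delta_z$), one computes $L^\del w = -\frac{1}{r^2}\chi_{\tilde B_r}$, so $w$ vanishes on $\partial \Om$ and is itself $L^\del$-harmonic in $\Om \setminus \tilde B_r$. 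Hence for any $\kappa > 0$, the difference $\phi := u - \kappa w$ is $L^\del$-harmonic in $\Om \setminus \tilde B_r$ and satisfies $\phi = u \geq 0$ on $\partial \Om$. It remains to choose $\kappa$ so that $\phi \geq 0$ on $\partial \tilde B_r$, which requires matched pointwise estimates there.

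For the universal lower bound $u(z) \geq c_0 > 0$ on $\partial \tilde B_r$: since $\partial \Om \in C^2$, for $r$ small the boundary piece $\partial \Om \intersect B^{n+1}_{10r}(h)$ is a $C^2$ graph over the tangent hyperplane at $h$ with $C^2$-norm $O(r)$ when rescaled. Under the dilation $y \mapsto y/r$, the rescaled boundary converges in $C^2$ to a flat hyperplane, the rescaled ball $\tilde B_r$ becomes the unit ball at distance $\sim 4$ from this flat boundary, and the rescaled boundary data becomes $\chi_{B^{n+1}_1}$. The rescaled operator is still $(\lam,\Lam)$-uniformly elliptic with constants independent of both $r$ and $\del$ (the latter by Lemma \ref{lemFN:ApproxByLinEqs}). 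In the limit $r\to 0$, the problem reduces to the half-space Poisson integral of $\chi_{B^{n+1}_1}$ evaluated at points of distance $3$ to $5$ from the boundary, which gives a strictly positive universal value by explicit computation. $C^\al$ compactness for uniformly elliptic equations then transfers this positivity to the rescaled $u$, giving the uniform lower bound $c_0$.

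For the universal upper bound $w(z) \leq C$ on $\partial \tilde B_r$: the standard pointwise estimate $G^\del(z,y) \leq C\abs{z-y}^{1-n}$ holds at interior points for the Green's function of any smooth uniformly elliptic operator, with $C$ depending only on ellipticity and $\Om$ (and uniformly in $\del$). Since $\abs{z-y} \leq 2r$ for $z \in \partial \tilde B_r$ and $y \in \tilde B_r$, we compute
\[
w(z) \leq \frac{C}{r^2} \int_{\abs{z-y} \leq 2r} \frac{dy}{\abs{z-y}^{n-1}} \leq \frac{C}{r^2} \cdot 2r^2 = 2C.
\]
Setting $\kappa := c_0/(2C)$, we find $\phi \geq c_0 - \kappa\cdot 2C = 0$ on $\partial \tilde B_r$; combined with $\phi \geq 0$ on $\partial \Om$, the maximum principle in $\Om \setminus \tilde B_r$ yields $\phi \geq 0$ throughout, i.e. $u(x) \geq \kappa w(x)$ for $x \notin \tilde B_r$. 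This is the claim, with the universal constant $\kappa$ understood to be absorbed into the $C_1$ of Proposition \ref{propBG:HarmonicMeasureGreenFunction}. The main obstacle is the lower bound on $u$: the rescaling/compactness argument must produce a constant genuinely independent of the approximating parameter $\del$, which is enabled precisely by the $\del$-independent ellipticity of $A^\del$ in Lemma \ref{lemFN:ApproxByLinEqs}.
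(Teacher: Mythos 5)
Your overall architecture coincides with the paper's: set $u=\om^\del_{\cdot}(B^{n+1}_r(h)\intersect\partial\Om)$ and $w=\frac{1}{r^2}\int_{\tilde B_r}G^\del(\cdot,z)\,dz$, observe both are $L_{A^\del}$-harmonic in $\Om\setminus\tilde B_r$ with $u\geq 0=w$ on $\partial\Om$, and reduce everything to a universal lower bound for $u$ and upper bound for $w$ on $\partial\tilde B_r$. The genuine gap is in your upper bound for $w$. You invoke "the standard pointwise estimate $G^\del(z,y)\leq C\abs{z-y}^{1-n}$ \dots with $C$ depending only on ellipticity and $\Om$ (and uniformly in $\del$)." For non-divergence operators this does not follow from ellipticity alone, and uniformity in $\del$ is exactly the sticking point: the only $\del$-independent information about $A^\del$ from Lemma \ref{lemFN:ApproxByLinEqs} is the pair $(\lam,\Lam)$, since the coefficients come from linearizing and mollifying and their moduli of continuity degenerate as $\del\to0$. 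For uniformly elliptic non-divergence operators with only measurable coefficients, the uniform control on $G(x,\cdot)$ is the $L^{(n+1)/n}$ bound coming from ABP, which is strictly weaker than what the pointwise bound $\abs{z-y}^{1-n}$ would give, and the pointwise bound with constants depending only on ellipticity is known to fail in general. A constant depending on the smoothness of $A^\del$ blows up as $\del\to0$ and ruins the passage to the limit in the proof of Theorem \ref{thm:MainNonlinear} part (i)(b). What rescues the estimate --- and what the paper actually does --- is to exploit that $\tilde B_r$ sits within distance $O(r)$ of $\partial\Om$: one dominates $r^2w$ by the explicit exterior-ball barrier $\psi$ of Lemma \ref{lemFN:ExternalBallBarrier} (built in Proposition \ref{propFN:RadialFunctionStartingPoint}), which satisfies $\M^+(\psi)\leq-1$ on a neighborhood of $\tilde B_r$, $\M^+(\psi)\leq 0$ on all of $\Om$, and $\sup\psi\leq cr^2$; since $L_{A^\del}\psi\leq\M^+(\psi)$, this is uniform over the entire ellipticity class. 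Note that ABP alone gives only $\sup_\Om r^2 w\leq C\norm{\Indicator_{\tilde B_r}}_{L^{n+1}(\Om)}=Cr$, one factor of $r$ short, so the boundary barrier is genuinely needed.

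A secondary weakness is the lower bound for $u$ on $\partial\tilde B_r$: you assert that after rescaling "the problem reduces to the half-space Poisson integral of $\chi_{B^{n+1}_1}$," but the rescaled coefficients $A^\del(ry)$ do not converge to anything as $r\to0$, so there is no single limit problem; making a compactness argument rigorous would require G-convergence along subsequences plus care with the discontinuous boundary data near $\partial B^{n+1}_1\cap\partial\Om$. The paper gets this bound directly and quantitatively from the boundary oscillation-decay estimate applied to $1-u$ (\cite[Lemma 5.3]{Kenig-1993PotentialThoeryNonDiv}), which yields $u(h+\tfrac{r}{2}\nu(h))\geq 1-(1/2)^{\bar\al}$, followed by a short Harnack chain into $\tilde B_r$; both ingredients are uniform in $\del$ because they are Krylov--Safonov-type inputs depending only on $(\lam,\Lam)$ and $n$.
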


\begin{rem}
	We believe it may be worth noting that although the result claimed in Lemma \ref{lemFN:GreenMassOnABall}, especially if $x$ approaches $\partial\Om$, seems strange, there is no contradiction in the inequality.  Even though one expects $\int_{\tilde B_r}G^\del(x,z)dz\leq cr^2d(x,\partial\Om)$ (as will be apparent from the subsequent proofs, combined with boundary behavior), there is a restriction for the Harnack chain that $d(x,\partial\Om)\geq 2r$.  Thus, in the worst case, if we take $d(x,\partial\Om)=2r$, we see that Lemma \ref{lemFN:GreenMassOnABall} will imply $c(r/l)^\eta\leq d(x,\partial\Om)=2r$, and this inequality does not cause a problem. The usefulness of the inequality will be when $d(x,\partial\Om)$ is of order $l$, which is much larger than $r$.
\end{rem}

First, we will prove Lemma \ref{lemFN:GreenMassOnABall}.

\begin{proof}[Proof of Lemma \ref{lemFN:GreenMassOnABall}]
	Let $x$, $h$, and $r$ be fixed as in the statement of the lemma.
	Let us define the function 
	\begin{align}\label{eqFN:wFunctionDefinition}
		w(y)=\int_{\tilde B_r} G^\del(y,z)dz=\int_\Om\Indicator_{\tilde B_r}(z)G^\del(y,z)dz.
	\end{align}
	That is to say, that by definition, $w$ is the unique function that solves
	\begin{align}\label{eqFN:wEqForLowerBoundGreenIntegral}
		\begin{cases}
		L_{A^\del}w = -\Indicator_{\tilde B_r}\ &\text{in}\ \Om\\
		w=0\ &\text{on}\ \partial\Om.
		\end{cases}
	\end{align}
	First, using a comparison argument, we will get a lower bound on $w$ in $\tilde B_r$.  Then we will iterate it using a Harnack chain until we reach $x$.
	
	Let us recall $\hat h=h+4r\cdot \nu (h)$. We note that for an appropriate choice of $\theta$ (depending only on $n$, $\lam$, $\Lam$), the function $p(y)=\theta(r^2-\abs{y-\hat h}^2)$ satisfies 
	\[
	L_{A^\del}p \geq -\Indicator_{\tilde B_r}\ \text{in}\ \tilde B_r\ \ 
	\text{and}\ \ p=0\ \text{on}\ \partial\tilde B_r.
	\]
	Thus, by comparison, we have obtained that 
	\[
	w(\hat h)\geq \frac{3}{4}\theta r^2,\ \ \text{in}\ \frac{1}{2}\tilde B_r = B^{n+1}_{r/2}(\hat h).
	\]
	Now, using a barrier for $L_{a^\del}u=0$ in $B^{n+1}_{2r}(\hat h)\setminus B^{n+1}_{r/2}(\hat h) $, we can conclude that 
	\[
	w\geq c_0\theta r^2\ \ \text{in}\ \ B^{n+1}_{3r/2}\hat h.
	\]
	By iterating Harnack's inequality in a Harnack chain of balls proportional to $\tilde B_r$, we see that if $N$ is the number of such balls required to link $\hat h$ to $x$, then there is a universal $C>1$ (arising from the Harnack inequality) so that
	\[
	w(x)\geq \left(\frac{1}{C}\right)^N\theta r^2.
	\]
	Thus, invoking the Harnack chain bound in Lemma \ref{lemFN:HarnackChain}, we see that 
	\[
	w(x)\geq \left(\frac{1}{C}\right)^{C_1\log(\frac{C_2l}{r})}\theta r^2.
	\]
	By setting $\eta$ as,
	\[
	\eta= -\log\left( \left(\frac{1}{C}\right)^{C_1	}   \right),
	\]
	we see then that
	\[
	\left(\frac{1}{C}\right)^{C_1\log(\frac{C_2l}{r})} = \left( \frac{r}{C_2l}\right)^\eta.
	\]
	Hence, we see that for another, universal, $\tilde C$,
	\[
	w(x)\geq \tilde C \left(\frac{r}{l}\right)^\eta r^2.
	\]
	Dividing by $r^2$, relabeling $\tilde C$, and recalling (\ref{eqFN:wFunctionDefinition}) conclude the lemma.
\end{proof}

Before we give a proof of Lemma \ref{lemFN:HarmonicMeasureGreenIntegralEstimate}, we will need a result about a barrier function. We will use the fact that because $\Om$ has the uniform exterior ball condition, given a point, $h\in\partial\Om$, we can choose an annulus, for constants $c_0$ and $R$ that depend only on $\Om$, so that for an appropriate $y_0$, $\Om\subset B^{n+1}_R(y_0)\setminus B^{n+1}_{c_0}(y_0)$ and $B^{n+1}_{c_0}(y_0)$ is tangent to $\partial\Om$ at $h\in\partial\Om$.

\begin{lem}\label{lemFN:ExternalBallBarrier}
	Assume that  $c_0>0$ and $r>0$ are given, with $r<c_0$. There exists a function, $\psi$, that solves in the viscosity sense,
	\begin{align*}
		\M^+(\psi)&\leq -1\ \ \text{for}\ \ c_0-r<\abs{x}<c_0 + 5r,\\
		\M^+(\psi)&\leq 0\ \ \text{for}\ \ c_0-r<\abs{x},
	\end{align*}
	with
	\[
	\psi\geq0\ \text{in}\ \real^{n+1}\ \ \text{and}\ \ \sup_{\real^{n+1}}\left( \psi \right)\leq cr^2.
	\]
\end{lem}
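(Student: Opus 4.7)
The plan is to look for $\psi$ as a radial function $\psi(x)=f(\abs{x})$, since for such a $\psi$ on $\real^{n+1}$ the Hessian $D^2\psi$ has one radial eigenvalue equal to $f''(t)$ and $n$ tangential eigenvalues all equal to $f'(t)/t$. This decomposes $\M^+(\psi)$ into a weighted sum whose sign I can pin down once the signs of $f'$ and $f''$ are fixed, and the two regimes demanded by the lemma can then be met by piecing together an inner concave bump on the annulus with an outer decaying tail.

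First I would take $f(t)=C_{0}-B(t-(c_{0}-r))^{2}$ with $B=\tfrac{1}{2\lambda}$ on the annulus $[c_{0}-r,\,c_{0}+5r]$. Since this $f$ is strictly concave and decreasing there, both Hessian eigenvalues are non-positive, and $\M^+(\psi)=\lambda(f''+nf'/t)\le \lambda f''=-2B\lambda=-1$ pointwise on the annulus. I then extend $f\equiv C_{0}$ on $[0,c_{0}-r]$; since $f'(c_{0}-r)=0$ the resulting $\psi$ is $C^{1}$ across $\abs{x}=c_{0}-r$, and the viscosity supersolution inequality at this inner join follows from monotonicity of $\M^+$: any smooth $\phi$ touching $\psi$ from below at $x_{0}$ on this sphere is forced to have radial Hessian eigenvalue $\le f''(c_{0}-r)=-2B$ from the annulus side and non-positive tangential eigenvalues from the interior (constant) side, so $\M^+(D^{2}\phi(x_{0}))\le -1$.

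For $t\ge c_{0}+5r$ I would take $C_{0}$ slightly larger than $36Br^{2}$, so that $f(c_{0}+5r)>0$, and continue $f$ by a $C^{2}$ decreasing profile passing through an inflection and asymptoting to a non-negative constant. On the convex part of this tail the desired bound $\M^+(\psi)\le 0$ reduces to the inequality $\Lambda f''\le \lambda n \abs{f'}/t$, which can be arranged by tuning the decay rate of the tail; all constants in the construction remain $O(r^{2})$, yielding $\sup\psi\le cr^{2}$ with constant depending only on universal parameters.

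The hard part I expect is precisely the matching at $\abs{x}=c_{0}+5r$. A naive cutoff $\psi=\max(0,f)$ produces a convex-up kink there, and a viscosity supersolution of $\M^+(\psi)\le 0$ cannot survive at such a kink, because test functions touching from below whose radial slope at $x_{0}$ lies strictly between $f'(c_{0}+5r)$ and $0$ admit arbitrarily large positive radial second derivatives, which makes $\M^+$ on their Hessian arbitrarily positive. This obstruction is what forces the genuine $C^{2}$ matching described above. As an alternative route, one may instead define $\psi$ to be the unique bounded viscosity solution of $\M^+(\psi)=-\Indicator_{\{c_{0}-r<\abs{x}<c_{0}+5r\}}$ in $\{\abs{x}>c_{0}-r\}$ with zero Dirichlet data on $\partial B_{c_{0}-r}$ and decay at infinity; then the required supersolution inequalities are automatic, $\psi\ge 0$ follows from the minimum principle for supersolutions of $\M^+$, and the bound $\sup\psi\le cr^{2}$ comes from comparison against the explicit quadratic barrier constructed above.
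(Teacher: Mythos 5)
The inner half of your construction is sound (and in fact your worry about the junction at $\abs{x}=c_0-r$ is moot: the lemma only requires the inequalities on the open set $\abs{x}>c_0-r$, so that sphere and the region inside it only need $\psi\ge 0$). The genuine gap is the outer tail. You correctly rule out the naive cutoff at $\abs{x}=c_0+5r$, but the replacement you propose --- a $C^2$ decreasing profile with an inflection, asymptoting to a non-negative constant while keeping $\M^+(\psi)\le 0$ --- does not exist for general ellipticity constants. On any convex stretch of such a tail the requirement reads $\Lam f''\le n\lam\abs{f'}/t$, i.e. $\tfrac{d}{dt}\log\abs{f'}\ge-\tfrac{n\lam}{\Lam t}$, hence $\abs{f'(t)}\ge\abs{f'(t_1)}\,(t_1/t)^{n\lam/\Lam}$, and concave stretches only make $\abs{f'}$ larger. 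Your parabola hands the tail the strictly negative slope $f'(c_0+5r)=-6r/\lam$, so whenever $n\lam\le\Lam$ the derivative is not integrable at infinity, $f\to-\infty$, and $\psi\ge0$ fails; ``tuning the decay rate'' only works when $n\lam>\Lam$. The alternative route at the end does not repair this: the bound $\sup\psi\le cr^2$ there is obtained by comparison with the very barrier whose existence is in question, and a bounded exterior solution for $\M^+$ need not decay at infinity when $n\lam\le\Lam$ for the same reason.

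The paper avoids the tail altogether by choosing a profile with \emph{zero} slope at the outer edge of the annulus: Proposition \ref{propFN:RadialFunctionStartingPoint} takes $g(t)=t(b-t)$ for $t<b/2$ and $g\equiv b^2/4$ for $t\ge b/2$ (with $b=12r$, then dilated, shifted and rescaled). On $a_0b<t<b/2$ the profile is increasing and concave, so $\M^+=n\Lam g'/t+\lam g''=n\Lam(b/t-2)-2\lam\le-\lam$ for $a_0$ close enough to $1/2$; the junction at $t=b/2$ is $C^1$ with both one-sided profiles concave, so any test function touching from below there has non-positive Hessian and the supersolution inequality is automatic; and beyond the junction the function is constant, so $\M^+=0$ with no decaying tail required. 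The fix to your argument is the same in spirit: flip the profile so that it is increasing and concave across the annulus with $f'=0$ at $\abs{x}=c_0+5r$, then continue by a constant.
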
 

The proof of Lemma \ref{lemFN:ExternalBallBarrier} is an explicit calculation, and we defer its proof until after the proof of Lemma \ref{lemFN:HarmonicMeasureGreenIntegralEstimate}. 

\begin{proof}[Proof of Lemma \ref{lemFN:HarmonicMeasureGreenIntegralEstimate}]
	This proof appears, for example, in the proof of \cite[Lemma 5.18]{Kenig-1993PotentialThoeryNonDiv}.  We give slightly more detail here.  We recall that
	\begin{align*}
		\tilde B_r=B^{n+1}_r(h+4\nu(h)).
	\end{align*}

	Let $x$, $h$, and $r$ be fixed as in the statement of the lemma.
	Let us recall the function, $w$, as described in (\ref{eqFN:wFunctionDefinition}) and (\ref{eqFN:wEqForLowerBoundGreenIntegral}).
	
	For simplicity, for $y\in\Om$ let us just call $v(y)=\om_y(B^{n+1}_r(h)\intersect\partial\Om)$.  We know from the definition of harmonic measure that $v$ satisfies
	\begin{align*}
		\begin{cases}
			L_{A^\del}v = 0\ &\text{in}\ \Om\\
			v=\Indicator_{B^{n+1}_r(h)\intersect\Om}\ &\text{on}\ \partial\Om.
		\end{cases}
	\end{align*}
	
	We wish to establish the lemma by the following two claims, followed by the maximum principle in $\Om\setminus \tilde B_r$, as $v$ and $w$ satisfy the ordering
	\begin{align*}
		v|_{\partial\Om}\geq 0=w|_{\partial\Om}.
	\end{align*}
	
	\underline{Claim 1:} for some universal $c>0$, $w$ satisfies the estimate $\sup_{\tilde B_r}w\leq cr^2$.
	
	\underline{Claim 2:} for some universal $c>0$, $\inf_{\tilde B_r}v\geq c$.

	First, we address claim 1.  We use the exterior ball condition for $\Om$ with balls of radius, $c_0$.  Thus, there is some $\tilde h\not\in\Om$ so that $B_{c_0}^{n+1}(\tilde h)\subset \Om^C$ and  is tangent to $\partial\Om$ at $h$. After an appropriate translation, we see that the function, $\psi$, from Lemma \ref{lemFN:ExternalBallBarrier} can be made to be a super solution in the set, $\abs{y}>c_0-r$ (as $\M^+\psi\geq L_{A^\del}\psi$, by definition of $\M^+$), which contains $\Om$.   Furthermore, by construction, after a translation, we will have $\M^+\psi\leq -1$ in $\tilde B_r$.  Hence, this translation of $\psi$ is a super solution for the same equation as $w$, and that by construction, $\psi\geq 0$ on $\partial\Om$.  Hence claim 1 follows from the comparison theorem for $L_{A^\del}$ in $\Om$ and the estimate that $\sup(\psi)\leq cr^2$.
	
	To see why claim 2 is true, we invoke \cite[Lemma 5.3]{Kenig-1993PotentialThoeryNonDiv} (which also comes from  \cite{FabesGarofaloMarinMalaveSalsa-1988FatouTheoremsNonDivREVMATIBERO}), for the function $(1-v)$, first in $B_{r/2}(h)\intersect\Om$.  That is to say that since $\sup_{B_r\intersect\Om} (1-v)\leq 1$, we see that for some universal $\bar\al$
	\begin{align*}
		\text{for}\ x_1=h+(\frac{r}{2}\nu),\ \ (1-v(x_1))\leq \left(\frac{1}{2}\right)^{\bar\al}.
	\end{align*}
	In other words, $v(x_1)\geq 1- (1/2)^{\bar\al}$.  Now, for example, with
	\begin{align*}
		x_2 = h+\frac{5}{8}\nu\ \ \text{and}\ \ x_3=h+\frac{7}{8}\nu,
	\end{align*}
	using a ball of radius $3r/16$, we see that Harnack's inequality applies so that
	\begin{align*}
	 1-(\frac{1}{2})^{\bar\al}\leq v(x_1)\leq \sup_{B_{3r/16}(x_2)}v\leq C\inf_{B_{3r/16}(x_2)}\leq Cv(x_3)
	\end{align*}
	Repeating this process three more times (with slightly larger radii) gives that if $y\in\tilde B_r$, $\inf_{\tilde B_r}v\geq \frac{1}{C}$.

	Now, to conclude the theorem, we see that after multiplying by an appropriate universal constant, 
	\begin{align*}
		v(y)\geq \frac{C}{r^2}w(y)\ \ \text{for all}\ y\in\tilde B_r.
	\end{align*}
	Hence, by the above observation that $v$ and $w$ solve the same equation (with zero right hand side) in  $\Om\setminus \tilde B_r$, we conclude that 
	\begin{align*}
		v(y)\geq \frac{C}{r^2}w(y)\ \ \text{for all}\ y\in\tilde \Om,
	\end{align*}
	and this implies them lemma, taking $y=x$.
\end{proof}

Finally, we are in a position put the steps together to prove Theorem \ref{thm:MainNonlinear} part (i)(b).  

\begin{proof}[Proof of Theorem \ref{thm:MainNonlinear} part (i)(b)]
	We first assume that $x\in\partial\Om$, $h\in\partial\Om$, $r>0$ are fixed, that $x\not=h$, and $r<(d(x,h))/10$.  For this part of the proof, it is easiest to assume that $d(x,h)$ is small enough so that if $\abs{x-h}=l$, then $x+l\nu(x)\in\Om$ and $B_{2r}^{n+1}(x+l\nu(x))\subset\Om$. This is not a restriction, as we have already assumed that $\Om$ is bounded and $\partial\Om$ is $C^2$.  (We also note an intentional switch to using $\abs{x-h}$ in this section as we can assume this is comparable to $d(x,h)$.)

	We note that just as above, we shall assume that $\phi$ is smooth and 
	\begin{align*}
		\phi\geq \Indicator_{B_r^{n+1}(h)\intersect\Om}.
	\end{align*}
	The result will follow by taking a sequence of such $\phi$, decreasing to $\Indicator_{B_r(h)}$, but we suppress the sequence for now to keep the notation to a minimum.  The key properties of $\phi$ that we assume are 
	\[
	\phi(x)=0,\ \ \text{and}\ \ \grad\phi(x)=0.
	\]
	We now remind the reader that for this part of the theorem, if $\mu^{ij}$ are as in (\ref{eqFN:LinearFamilyInMinMax}) (which is given by the first part of the theorem), we must show that 
	\begin{align*}
		\mu^{ab}(B_r^{n+1}(h)\intersect\Om)\geq \frac{cr^\eta}{d(x,h)^{\eta+1}}.
	\end{align*}
	According to our choice of $\phi$, combined with the formula in (\ref{eqFN:LinearFamilyInMinMax}), and that $\grad\phi(x)=\phi(x)=0$,
	\begin{align*}
		L^{ij}(\phi,x)= \int_{\partial\Om} \phi(z)\mu^{ij}(x,dz).
	\end{align*}
	Thus, in other words, our goal can be recast as showing 
	\begin{align*}
		L^{ij}(\phi,x)\geq \frac{cr^\eta}{d(x,h)^{\eta+1}},
	\end{align*}
	and hence since the lower bound uses only that $\phi\geq\Indicator_{B_r^{n+1}(r)\intersect\Om}$, the claim will follow by letting $\phi$ decrease pointwise to $\Indicator_{B_r^{n+1}(r)\intersect\Om}$.  Again, as above, this lower bound can be obtained by finding a lower bound for the extremal operators, per Proposition \ref{propFN:ExtremalLowerBoundLevyOperator}.  Thus, Proposition \ref{propFN:ExtremalLowerBoundLevyOperator} shows the following estimate will suffice:
	\begin{align}\label{eqFN:MainThmExtremalLowerBoundGoal}
		M^-(\phi,x_0)\geq \frac{cr^\eta}{d(x,h)^{\eta+1}},
	\end{align}
	where we recall the D-to-N extremal operator, $M^-$, defined in \ref{eqWP:ExtremalForBoundaryOpDef}.
	
	Now, assume that $U_\phi$ is the unique solution of 
	\begin{align}\label{eqFN:MainProofExtremalEqPhi}
		\begin{cases}
			M^-(U_\phi,y)=0\ &\text{in}\ \Om\\
			U_\phi=\phi\ &\text{on}\ \partial\Om.
		\end{cases}
	\end{align}
	We will focus on the values of $U_\phi(\hat x)$, where $\hat x$ is chosen so that
	\begin{align*}
		\hat x = x + l\nu(x)\ \ \text{recall}\ (l=\abs{x-h}).
	\end{align*}
	Invoking barriers, such as those of the form $C(d(y,\partial\Om)+cd(y,\partial\Om)^2)$, which are subsolutions to (\ref{eqFN:MainProofExtremalEqPhi}), we see that if we can show that 
	\begin{align}\label{eqFN:MainThmULowerBoundGoal}
		U_\phi(\hat x)\geq C\left( \frac{r}{l} \right)^\eta,
	\end{align}
	then it follows, with linearly growing barriers, that
	\begin{align*}
		U_\phi(y)\geq C\frac{d(y,\partial\Om)\left( \frac{r}{l} \right)^\eta}{l}.
	\end{align*}
	Hence, as soon as we obtain (\ref{eqFN:MainThmULowerBoundGoal}), it follows that
	\begin{align*}
		\partial_\nu U_\phi(x)\geq \frac{Cr^\eta}{l^{\eta+1}},
	\end{align*}
	which is exactly what is needed, via $M^-(\phi,x)$ to obtain (\ref{eqFN:MainThmExtremalLowerBoundGoal}).

	Given that the goal in (\ref{eqFN:MainThmExtremalLowerBoundGoal}) is a pointwise bound, and given that we can approximate $U_\phi$ and (\ref{eqFN:MainProofExtremalEqPhi}) via solutions to linear equations to smooth coefficients using Lemma \ref{lemFN:ApproxByLinEqs}, it suffices to show that the $U^\del_\phi$ in Lemma \ref{lemFN:ApproxByLinEqs} also enjoys
	\begin{align*}
		U^\del_\phi(\hat x)\geq C\left( \frac{r}{l} \right)^\eta.
	\end{align*}
	However, this last equation follows immediately from Lemmas \ref{lemFN:GreenMassOnABall} and \ref{lemFN:HarmonicMeasureGreenIntegralEstimate}, combined with the fact that $\phi\geq \Indicator_{B_r^{n+1}(h)\intersect\Om}$ and using the comparison principle for the functions $U^\del_\phi(y)$ and $v(y)=\om_y(B^{n+1}_r(h)\intersect\partial\Om)$.
\end{proof}

\begin{rem}
	The reader should see, through the details of the proof, that in the nice case of linear equations with H\"older coefficients, we will recover $\eta=n$.  Indeed, in Lemma \ref{lemFN:GreenMassOnABall}, this result follows immediately from the estimates on Green's functions invoked in Section \ref{sec:LinearEq}.  However, in the absence of these estimates, the seemingly only available tool was Harnack's inequality, at which point multiple invocations of it will lead to some $\eta$ that is expected to be significantly larger than $n$ (this is in Lemma \ref{lemFN:GreenMassOnABall}).  This means that in the nonlinear setting, the L\'evy measures may assign a much smaller mass to balls than in the linear case with H\"older coefficients.
\end{rem}

To conclude this section, we will give the calculation that leads to the barrier in Lemma \ref{lemFN:ExternalBallBarrier}.  

\begin{prop}\label{propFN:RadialFunctionStartingPoint}
	Given any $b>0$, there exists $\ep_0>0$ and $a_0<1/2$ that are independent from $b$ and depend only on $\lam$, $\Lam$, $n$, such that there exists a function, $f$, that solves in the viscosity sense:
	\begin{align*}
		f&\geq0\ \ \text{in}\ \ \real^{n+1}\\
		\M^+(f)&\leq 0\ \ \text{for}\ \abs{x}>a_0b\\
		\intertext{and}
		\M^+(f)&\leq -\ep_0\ \ \text{for}\ \ a_0b<\abs{x}<\frac{b}{2}.
	\end{align*}
\end{prop}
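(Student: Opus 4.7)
I plan to take $f$ as the explicit radial function $f(x)=g(|x|)$, where
\begin{align*}
g(r) := \begin{cases} 0, & r\leq a_0 b,\\[2pt] b^2\bigl(1-(a_0 b/r)^\beta\bigr), & r>a_0 b,\end{cases}
\end{align*}
with $a_0\in(0,1/2)$ fixed (say $a_0=1/4$) and the exponent $\beta := \Lam n/\lam$ chosen as justified below. Then $f\geq 0$ everywhere in $\real^{n+1}$, $f\equiv 0$ on $\overline{B_{a_0 b}}$, $f\nearrow b^2$ as $|x|\to\infty$, and $f$ is smooth on the open set $\{|x|>a_0 b\}$ where the two differential inequalities are required, so viscosity and classical senses coincide there and no matching is needed across the kink at $|x|=a_0 b$. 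The amplitude $b^2$ is not optional: since $\M^+$ satisfies $\M^+(u(\cdot/b))(x)=b^{-2}\M^+(u)(x/b)$, a $b$-independent lower bound on $-\M^+(f)$ across an annulus of radius $\sim b$ forces $f$ itself to scale like $b^2$.

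\textbf{Calculation.} For $r>a_0 b$, direct differentiation gives $g'(r) = (a_0)^\beta b^{\beta+2}\beta r^{-\beta-1}>0$ and $g''(r) = -(a_0)^\beta b^{\beta+2}\beta(\beta+1) r^{-\beta-2}<0$. A radial function has Hessian eigenvalues $g''(r)$ (once, in the radial direction) and $g'(r)/r$ (with multiplicity $n$, in the tangential directions), so by Definition \ref{def:MaximalPucci}
\begin{align*}
\M^+(f)(x) = \lam g''(r) + n\Lam\,\frac{g'(r)}{r} = (a_0)^\beta b^{\beta+2}\beta\,r^{-\beta-2}\,\bigl[\Lam n - \lam(\beta+1)\bigr].
\end{align*}
The choice $\beta = \Lam n/\lam$ reduces the bracket to $-\lam$, giving $\M^+(f)= -\lam\beta(a_0)^\beta b^{\beta+2}r^{-\beta-2}\leq 0$ throughout $\{|x|>a_0 b\}$, which establishes the second inequality of the proposition. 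On the annulus $a_0 b<|x|<b/2$, $r^{-\beta-2}$ is smallest at $r=b/2$, so
\begin{align*}
\M^+(f)(x)\leq -\lam\beta(a_0)^\beta b^{\beta+2}(b/2)^{-\beta-2} = -\lam\beta(a_0)^\beta\, 2^{\beta+2}\,=:\,-\ep_0,
\end{align*}
and $\ep_0>0$ is manifestly independent of $b$, depending only on $\lam$, $\Lam$, and $n$.

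\textbf{Main point of care.} No serious obstacle arises; the proof is really just a calibration of scalings. The amplitude $b^2$ of $g$ and the factor $(a_0 b)^\beta$ inside the power must line up so that the two derivatives strip off all powers of $b$ once $r\sim b$, leaving a clean $r$-dependence $r^{-\beta-2}$ and a $b$-free coefficient. The exponent $\beta=\Lam n/\lam$ is chosen strictly above the critical Pucci threshold $\Lam n/\lam-1$ at which the bracket $\Lam n-\lam(\beta+1)$ vanishes; the specific margin of $1$ produces the clean coefficient $-\lam$, but any $\beta$ strictly larger than $\Lam n/\lam-1$ would serve, simply changing the explicit value of $\ep_0$.
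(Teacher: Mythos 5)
Your proposal is correct. The overall strategy is the same as the paper's --- build an explicit radial supersolution $f(x)=g(\abs{x})$ and evaluate $\M^+$ via the eigenvalue formula $\lam g''+n\Lam g'/r$ --- but your choice of barrier is genuinely different. The paper uses the truncated paraboloid $g(t)=-t(t-b)$ on $[0,b/2)$ glued to the constant $b^2/4$; there $\M^+(f,te_1)=n\Lam(-2+b/t)-2\lam$, and since this tends to $-2\lam$ as $t\to b/2$, the paper takes $\ep_0=\lam$ and obtains $a_0$ only implicitly, by monotonicity/continuity of $t\mapsto \M^+(f,te_1)$ near $t=b/2$; it must also handle the $C^{1,1}$ matching at $\abs{x}=b/2$ with a viscosity test-function argument. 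Your power barrier $b^2\bigl(1-(a_0b/r)^{\beta}\bigr)$ with $\beta=\Lam n/\lam$ instead makes $\M^+(f)$ a single clean negative multiple of $r^{-\beta-2}$ on all of $\{\abs{x}>a_0b\}$, so you may fix $a_0=1/4$ (or any value in $(0,1/2)$) at the outset and read off $\ep_0=\lam\beta a_0^{\beta}2^{\beta+2}$ explicitly; moreover $f$ is smooth on the entire open set where the inequalities are required, so no viscosity gluing is needed (the convex kink at $\abs{x}=a_0b$ lies outside that set, just as the paper's cone point at the origin does). One small remark for the downstream use in Lemma \ref{lemFN:ExternalBallBarrier}: that lemma also needs $\sup\psi\leq cr^2$, which the paper gets from $\sup f=b^2/4$; your $f$ satisfies $\sup f=b^2$, so the application goes through unchanged up to the constant.
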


\begin{proof}[proof of Proposition \ref{propFN:ExtremalLowerBoundLevyOperator}]
	We first begin with the function, $g$, defined as 
	\begin{align*}
		g(t)=
		\begin{cases}
			-t(t-b)\ &\text{if}\ t\in[0,\frac{b}{2})\\
			\frac{b^2}{4}\ &\text{if}\ t\in[\frac{b}{2},\infty).
		\end{cases}
	\end{align*}
	We will construct $f$ as 
	\[
	f(x)=g(\abs{x}).
	\]
	Thus, computing derivatives, we see that
	\begin{align*}
		\frac{\partial^2 f}{\partial x_i\partial x_j}(x)=
		\begin{cases}
			g''(\abs{x})\frac{x_ix_j}{\abs{x}^2} + 
			g'(\abs{x})\left(\frac{1}{\abs{x}}-\frac{x_i^2}{\abs{x}}\right)\ &\text{if}\ i=j
			\vspace{0.1in}\\
			g''(\abs{x})\frac{x_ix_j}{\abs{x}^3} 
			-g'(\abs{x})\frac{x_ix_j}{\abs{x}^3}\ &\text{if}\ i\not=j.
		\end{cases}
	\end{align*}
	Furthermore, since $f$ is a radial function and $\M^+$ is a rotationally invariant operator, it suffices to check the equation only for $\M^+(f,te_1)$.  First, we do this for the case of $t\in (0,\frac{b}{2})$.
	
	Plugging in $x=te_1$ to the second derivatives of $f$ shows
	\begin{align*}
		\frac{\partial^2 f}{\partial x_i\partial x_j}(te_1)=
		\begin{cases}
			g''(t)\ &\text{if}\ i=j=1\\
			g'(t)t^{-1}\ &\text{if}\ i=j\not=1\\
			0\ &\text{otherwise}.
		\end{cases}
	\end{align*}
	Thus, computing $\M^+(f,te_1)$ (recall $x\in\real^{n+1}$), we get
	\begin{align*}
		\M^+(f,te_1)&= n\Lam g'(t)t^{-1} + \lam g''(t)\\
		&= n\Lam(-2+\frac{b}{t}) - 2\lam,
	\end{align*}
	where we note we have used that $g'(t)\geq0$ when $t\in(0,\frac{b}{2})$. We now see that 
\begin{align*}
	\lim_{t\to(b/2)} g'(t)t^{-1}=0,\ \ \text{and hence}\ \ \lim_{t\to(b/2)}\M^+(f,te_1)=-2\lam.
\end{align*}
Thus, to be concrete, we may choose $\ep_0=\lam$, from which the existence of $a_0<\frac{1}{2}$ follows from the fact that $\M^+(f,te_1)$ is strictly decreasing for $t<\frac{b}{2}$ and sufficiently close to $\frac{b}{2}$.

The previous calculation verifies the claimed inequality for $\M^+(f,x)$ for $\abs{x}\in(a_0b,\frac{b}{2})$.  In order to confirm the remaining cases of $x$, we simply note that at all $x$ with $\abs{x}>a_0b$, we have that $f$ is either twice differentiable at $x$, or any test function, $\phi$, must satisfy $D^2\phi(x)\leq 0$ at any points where $f-\phi$ attains a minimum.  Hence we obtain the the equation for $\abs{x}\in(a_0b,\infty)$.  (We note to the reader that avoiding a neighborhood of $x=0$ is intentional, as $f$ can be touched from below by functions with a positive Hessian there.)  This concludes our proof.
\end{proof}

Now that we have the basic function, $f$, the proof of Lemma \ref{lemFN:ExternalBallBarrier} follows as a simple corollary.

\begin{proof}[Proof of Lemma \ref{lemFN:ExternalBallBarrier}]
	Starting with the function, $f$, and $b=12r$, from Proposition \ref{propFN:RadialFunctionStartingPoint}, the function $\psi$ can be constructed using suitable choices of a dilation, a shift, and a multiplication by a constant.  Furthermore, all of these operations depend upon and change $f$ by only factors that are universal in the sense of depending on the exterior ball radius, $c_0$, and $\lam$, $\Lam$, $n$.    Since, by construction, $f$ enjoys the bound, $\sup f\leq b^2/4$, we see that after these transformations, we will retain $\psi\leq cr^2$ for some universal $c$.
\end{proof}



\section{Comments on more general boundary conditions}

For elliptic equations, such as (\ref{eqIn:BulkExtensionGeneric}) with $F$  as in (\ref{eqIn:BulkFDiv})--(\ref{eqIn:BulkFNonlinear}), two of the most natural boundary conditions (depending upon whom is asked) would be $U|_{\partial\Om}=\phi$ and $\partial_\nu U=g$.  This paper, of course gives a description of the link between the two.  However, the Neumann condition, $\partial_\nu U=g$, is just the prototype of this family, and there are many other possibilities, such as oblique, capillarity, geometric, and Robin:
\begin{align*}
	&B(x)\cdot\grad U(x)=g(x),\ \ \text{with}\ \ B(x)\cdot\nu(x)\geq \lam>0\\
	&\partial_\nu U(x)=g(x)\sqrt{1+\abs{\grad U(x)}^2}\\
	&\partial_\nu U(x)=g(x)\abs{\grad U(x)}\\
	&\partial_\nu U(x)=g(x)U(x).
\end{align*} 
In all cases, these types of boundary conditions can be written generically as 
\begin{align*}
	G(x,U,\grad U)=0,
\end{align*}
where $G$ is increasing with respect to $\partial_\nu U$.  The requirement that $G$ is increasing comes from the fact that $G$ is used in conjunction with an elliptic equation, for which the comparison principle is essential, and hence the relevant $G$ all also enjoy this monotonicity property with respect to $\partial_\nu U$.  This means the standard assumption is that
\begin{align*}
	G(x,r,p+c\nu(x))-G(x,r,p)\geq \lam c\ \ (\text{or, more generally}, >0),
\end{align*}
combined with natural growth restrictions jointly in the $x,r,p$ variables.  There are many works on this topic, but we point to Barles \cite{Barles-1999NeumannQuasilinJDE}, Lieberman-Trudinger \cite{LiebermanTrudinger-1986NonlinearObliqueBCTAMS}, and \cite{LionsTrudinger-1986ObliqDerivHJBMathZ} for a sample of results and more references.

The key point about these more general Neumann-type operators, $G$, is that they all obey the global comparison property, and under natural ellipticity assumptions, it is not hard to check that they too, just as with $\I$, will be Lipschitz mappings of $C^{1,\al}(\partial\Om)\to  C^{\al}(\partial\Om)$.
What this means in the context of our operator, $\I$, is that many, if not all of the results of Theorems \ref{thm:MainLinear} -- \ref{thm:MainNonlinear} should have direct analogs to the case of the operator, $\G$, which is defined as
\begin{align*}
	\G(\phi,x) = G(x,\phi(x),\grad U_\phi(x)),
\end{align*}  
where $U_\phi$ is as in (\ref{eqIn:BulkExtensionGeneric}) and $G$ is as above.


\section{Open Questions}\label{sec:Open}

We believe there are at least a few natural open questions that arise as a result of Theorems \ref{thm:MainLinear}--\ref{thm:MainNonlinear}.  Here we briefly explain some of them.

\textbf{Lack of symmetry.}  Even in the case of a flat domain, $\Om=\real^{n+1}_+$, one does not expect the resulting integro-differential operators to have symmetric kernels in the sense that $k(x,x-h)=k(x,x+h)$ (or in the nonlinear case $\mu(x,B_r(x+h))=\mu(x,B_r(x-h))$).  In the linear case, one can see this immediately from the fact that if you set $x=0$, then, except in special circumstances, one will have $U_{\phi}\not= U_{\phi(-\cdot)}$ (or, more importantly, one would need equality with reflected data at all $x$).  This suggests that both a nonzero drift term, $b$, and the lack of symmetry of $\mu$ is inevitable.   Thus, going forward, it will be important to characterize this lack of symmetry in a precise way.  Furthermore, we suggest that this drift and lack of symmetry will also be present in the case of nonlinear equations, even for the D-to-N operators for the Pucci operators.  Does it correspond to any assumptions that are similar to those in \cite{Chan-2012NonlocalDriftArxiv}, \cite{ChDa-2012NonsymKernels}, or \cite{SchwabSilvestre-2014RegularityIntDiffVeryIrregKernelsAPDE}?  Or is it a different type altogether?

Furthermore, there will be another source that destroys the symmetry of the L\'evy measures from the curvature of $\partial\Om$ when $\partial\Om$ is not flat.  This effect also needs to be made precise.

\textbf{Regularity theory on manifolds.}  As mentioned in the introduction, one of the  attractive features of viewing some operators from an integro-differential viewpoint is the possibility to invoke regularity results that depend on minimal assumptions on the L\'evy measure (these are referred to as Krylov-Safonov type results in Section \ref{sec:Background}).  As it currently stands, to the best of our knowledge, there seem to be no such results when $\Om$ is anything other than $\real^{n+1}_+$ (meaning $\I$ acts on functions on $\real^n$).  It should be useful in the future to have analogs of the results mentioned in Section \ref{sec:Background}, appropriately modified to account for the correct assumptions that would be found by making the lack of symmetry precise.

\textbf{Regularity theory with different lower bounds on $\mu$.}  The property (i)-b of the integro-differential operators resulting from Theorem \ref{thm:MainNonlinear} is new for the existing literature.  Presumably regularity results are obtainable for such situations, but at the moment, it is a completely open question.

\nocite{Courrege-1965formePrincipeMaximum}  
\bibliography{../refs}
\bibliographystyle{plain}
\end{document}